\documentclass{article}
\usepackage{graphicx} 
\usepackage{amsfonts}
\usepackage{amssymb}
\usepackage{amsmath}
\usepackage{amsthm}
\usepackage[margin=1in]{geometry}
\usepackage{indentfirst}
\usepackage{mathtools}
\mathtoolsset{showonlyrefs}
\usepackage{authblk}
\usepackage{xcolor}

\usepackage{hyperref}
\hypersetup{citecolor=blue, urlcolor=cyan, colorlinks=true, linkcolor=blue}

\newcommand{\beu}{\begin{equation*}}
\newcommand{\eeu}{\end{equation*}}
\newcommand{\be}{\begin{equation}}
\newcommand{\ee}{\end{equation}}

\theoremstyle{plain}
\newtheorem{theorem}{Theorem}[section]
\newtheorem{lemma}[theorem]{Lemma}
\newtheorem{proposition}[theorem]{Proposition}
\newtheorem{corollary}[theorem]{Corollary}
\newtheorem{assumption}[theorem]{Assumption}
\newtheorem{example}[theorem]{Example}
\newtheorem{definition}[theorem]{Definition}

\theoremstyle{remark}
\newtheorem{remark}[theorem]{Remark}

\title{Exponential Convergence of the Sinkhorn Algorithm for the Schrödinger Bridge with Regime Switching}
\author[1]{Katharina Eichinger}
\author[2]{Anna Kazeykina}
\author[3]{Zhenjie Ren}
\author[2]{Hecheng Wang}
\affil[1]{ParMA, Inria and Université Paris Saclay}
\affil[2]{LMO, Université Paris-Saclay}
\affil[3]{LaMME, Université \'Evry Paris-Saclay}

\date{}

\begin{document}

\maketitle

\begin{abstract}
This paper studies the convergence of the Sinkhorn algorithm for the Schr\"odinger bridge problem with regime switching, as introduced in \cite{zlotchevski2025schrodingerbridgeproblemjump}. We consider a class of regime-switching stochastic systems on the hybrid state space \(E=\mathbb{R}^d\times\{1,\ldots,m\}\), and construct the Sinkhorn iteration through the associated equivalent entropic optimal transport formulation. 
The main result of this paper is the exponential convergence of the Sinkhorn algorithm in relative entropy under compactness assumptions. 
Our proofs are inspired by the arguments recently developed for proving exponential convergence of the classical Schrödinger problem in \cite{ChiariniConfortiGrecoTamanini2024SemiconcStabConv, eckstein2025hilbert}.
We perform a similar analysis for the partially observed terminal setting, where only the marginal distribution of the continuous component is prescribed at the terminal time, while the discrete regime is unobserved. 

\end{abstract}

\noindent\textbf{Keywords:} Sinkhorn algorithm; Schr\"odinger bridge; regime-switching diffusion; entropic optimal transport; relative entropy stability; exponential convergence


\section{Introduction}

The Schrödinger bridge problem, introduced by Schrödinger in 1931 \cite{Schrodinger1931,Schrodinger1932}, asks for the most likely stochastic evolution connecting two prescribed endpoint distributions. In modern terms, it seeks, among all path measures satisfying the endpoint marginal constraints, the one closest in relative entropy to a reference diffusion process \cite{Follmer1988}. It has become popular in recent year due to its proximity to optimal transport \cite{leonard2014survey} and efficiency to compute solutions \cite{cuturi2013sinkhorn}.
This framework also provides a flexible way to construct stochastic dynamics between probability distributions and has recently attracted substantial interest in machine learning, especially in generative modeling, where it connects diffusion-type models, stochastic control, and distribution matching \cite{Wang2021DeepSB,DeBortoli2021DiffusionSB,Chen2022SBFBSDE}. Beyond noise-to-data generation, Schrödinger bridges have been applied to image translation and restoration, as well as to other scientific problems where intermediate dynamics are inferred from marginal observations, such as single-cell trajectory inference, cell differentiation, and protein conformational changes \cite{Liu2023I2SB,Somnath2023AlignedDSB,Tong2024SimulationFreeSB}. 

However, many existing Schrödinger bridge models are formulated in the classical Euclidean setting, where each sample is described only by a continuous position and follows a common stochastic evolution mechanism. This can be restrictive for heterogeneous systems. A natural extension is to attach a regime label to each sample, representing its class, condition, environment, or latent mode. The label may remain fixed or evolve over time, leading to a multi-regime formulation in which the continuous dynamics depend on the current regime and the regime itself may switch.

In this paper, we consider a multi-regime version of the Schrödinger bridge
problem, as introduced in \cite{zlotchevski2025schrodingerbridgeproblemjump}. In contrast to the classical setting where the state variable takes
values only in Euclidean space, we consider the hybrid state space $E=\mathbb{R}^d\times \mathcal I$,
where \(\mathcal I=\{1,\ldots,m\}\) is a finite set of regimes. Thus, each
sample carries not only a continuous position \(x\in\mathbb{R}^d\), but also a
discrete state \(i\in\mathcal I\). Accordingly, the Euclidean diffusion is
replaced by a regime-switching diffusion \cite{YinZhu2010}. Intuitively, the particle
diffuses continuously within its current regime, while a Poisson jump mechanism
switches the particle between different regimes; the jump changes only the
regime index \(I_t\), without directly changing the continuous position \(X_t\).
More precisely, we consider a hybrid-state process \(Z_t=(X_t,I_t)\) whose
dynamics are given by
\[
\left\{
\begin{aligned}
& dX_t
=
b_{I_t}(t,X_t)\,dt+\sigma_{I_t}(t,X_t)\,dW_t,\\
& \mathbb{P}(I_{t+h} = j \mid I_t = i, X_t = x) = \lambda_{ij}(t, x) h + o(h), \quad h \downarrow 0,
\end{aligned}
\right.
\]
where \(W_t\) is a Brownian motion and \(\lambda_{ij}(t,x)\) is the switching intensity
from regime \(i\) to regime \(j\) if the current state is $ ( X_t, I_t ) = (x, i) $. Therefore,
the reference process contains both the continuous diffusion mechanism within
each regime and the random switching mechanism between regimes.

As in the classical Schrödinger bridge problem, our goal is to find the most
likely path distribution under prescribed endpoint observations. Let \(P\) be
the path law of the reference regime-switching diffusion. Given two endpoint
marginal distributions
$
\rho,\mu\in\mathcal P(\mathbb{R}^d\times\mathcal I),
$
the fully observed multi-regime Schrödinger bridge problem is formulated as
\begin{equation}
\label{eq:schrodinger_bridge}
\inf_{Q\ll P} \left\{ \mathcal H(Q\mid P): Q\circ Z_0^{-1}=\rho,\ Q\circ Z_T^{-1}=\mu \right\},
\end{equation}
where \(\mathcal H(Q\mid P)\) denotes the relative entropy of the path measure \(Q\) with
respect to the reference path measure \(P\). This formulation means that, among
all stochastic evolutions matching the prescribed initial and terminal
distributions, we select the one with minimal deviation in relative entropy from the reference dynamics.

Diffusion models with regime switching have multiple applications in epidemic modeling \cite{TuongNguyenDieuTran2019SIRSRegimeSwitching}, population dynamics \cite{LuoMao2007StochasticPopulationRegimeSwitching}, financial modeling \cite{Zhang2001StockTradingOptimalSelling}, risk management \cite{HuPang2025RiskManagementGMB}; the Schrödinger bridge problem for such models also naturally arises in the problem of allocation of drones (see e.g. \cite{kazeykina2025eotconvex} for the context). 

One of natural applications of the mathematical problem considered in the present paper is longitudinal medical imaging and disease-progression modeling \cite{tay2025shape,oxtoby2023disease}. In this setting, the continuous variable $X_t$ may represent a medical image, or a high-dimensional feature extracted from medical images, while the discrete variable $I_t$ represents a latent clinical state, such as disease stage, treatment status, or recovery state. Thus, an endpoint $(x,i)$ encodes both the observed imaging state and the underlying clinical regime, and a multi-regime Schrödinger bridge can model stochastic evolution between patient populations whose imaging distributions and clinical states change over time. Compared with a single-regime model, regime switching naturally represents disease progression, treatment response, or recovery through random transitions between clinical states, in the spirit of multistate and continuous-time hidden Markov models for disease dynamics \cite{liu2015cthmm,therneau2024multistate}. 

In longitudinal studies the clinical regime may be observed only intermittently, while lower-cost measurements such as blood pressure, heart rate, respiratory indices, or other physiological signals may be available more frequently; this motivates a setting in which the initial hybrid state is observed, but at the terminal time only the continuous component is prescribed \cite{hhs2024rpm}.
This naturally leads to the partially observed multi-regime Schrödinger bridge
considered in this paper, which has not been studied in the literature up to our knowledge. Given an initial distribution
$\rho\in\mathcal P(\mathbb{R}^d\times\mathcal I)$
and a terminal marginal distribution
$\mu_{\mathrm p}\in\mathcal P(\mathbb{R}^d)$
for the continuous component, the partially observed problem is defined by
\begin{equation}
\label{eq:partial_schrodinger_bridge}
\inf_{Q\ll P} \left\{ \mathcal H(Q\mid P): Q\circ Z_0^{-1}=\rho, \quad Q\circ X_T^{-1}=\mu_{\mathrm p} \right\}.
\end{equation}

Numerically, the equivalence between Schrödinger bridges and entropic
optimal transport \cite{cuturi2013sinkhorn} provides a natural way to solve Schrödinger bridge
problems through the Sinkhorn algorithm. More precisely, after reducing the
dynamic problem \eqref{eq:schrodinger_bridge} to the endpoint space and setting
$c_{ij}(x,y):=-\log r_{ij}(0,x;T,y)$, where $ r_{ij}(0,x;T,y) $ is the transition density of the reference process,
the static Schrödinger problem is equivalent, up to an additive constant, to
\begin{equation}\label{eq:main}
\inf_{\pi\in\Pi(\rho,\mu)}
\left\{
\int_{(\mathbb{R}^d\times\mathcal{I})^2}
c(z_0,z_T)\,\pi(dz_0,dz_T)
+
H(\pi\mid \rho\otimes\mu)
\right\},
\end{equation}
where $\Pi(\rho,\mu)$ denotes the set of probability measures on $ E \times E $ with prescribed endpoint marginals $ \rho $ and $ \mu $.
This EOT formulation is the basis for the Sinkhorn iteration studied in this
paper.

There is a vast literature on the analysis of Sinkhorn’s algorithm for entropic optimal transport. In this work, we restrict attention to the results most closely related to our setting. A recent contribution in this direction is due to \cite{GhosalNutz2025SinkhornRate}, that establishes non-asymptotic convergence rates for Sinkhorn’s algorithm in relative entropy for a broad class of state spaces and cost functions. However, the assumptions imposed there may not be straightforward to verify in some concrete applications. We also mention the earlier work \cite{NutzWiesel2023StabilitySchrodingerPotentials}, which proves convergence of Sinkhorn iterates in total variation, though without an explicit convergence rate. These results are closely related to the stability theory developed in \cite{EcksteinNutz2022QuantitativeStability}, that proves quantitative stability estimates for entropic optimal transport and applies them to obtain the convergence of Sinkhorn’s algorithm in Wasserstein distance.

The present work is principally inspired by \cite{ChiariniConfortiGrecoTamanini2024SemiconcStabConv}, where semiconcavity properties of the cost function and of the Sinkhorn potentials are used to prove stability of entropic plans and exponential convergence of Sinkhorn’s algorithm. This work builds in turn on \cite{conforti2024weak}; see also \cite{ConfortiDurmusGreco2023SinkhornContraction, GrecoNobleConfortiDurmus2023SinkhornGradients, ChaintronConfortiEichinger2025} for closely related works on the convergence of Sinkhorn algorithm and propagation of semiconcavity. In the same context, \cite{GrecoTamanini2025Hessian} is also worth mentioning, as it provides a dynamic proof of quantitative stability results, only requiring one-side Hessian bounds for the Schro\"dinger potentials. Compared with the aforementioned results, our model requires the analysis of semiconcavity properties for solutions of a larger coupled system of equations. Whether such properties hold in this setting is not clear and constitutes one of the main additional difficulties.

The present paper draws, among other sources, on the work \cite{eckstein2025hilbert}, where exponential convergence of Sinkhorn’s algorithm in total variation distance is established using Hilbert projective metric methods. We use these results as one of the building blocks for deriving stronger estimates in terms of relative entropy.

The interest in Schro\"dinger bridge problem with different regimes is relatively recent. 
Several related works include \cite{eldesoukey2024excursion,EldesoukeyMiangolarraGeorgiou2025}, which study Schrödinger bridge problems with a killing mechanism. In fact, this model can be viewed as a degenerate form of a two-regime Schrödinger bridge: after particles enter one of the regimes, their evolution is completely stopped, in the sense that both the drift and the diffusion coefficient vanish. The authors derive a Sinkhorn-type algorithm which is not equivalent to the standard EOT-based Sinkhorn algorithm, and prove its convergence using Hilbert's projective metric.

The multi-regime Schrödinger problem was first studied in the recent paper  \cite{zlotchevski2025schrodingerbridgeproblemjump}, which builds upon \cite{zlotchevski2024schrodingerjump}. The paper focuses on the dynamic structure of the optimal bridge: the authors derive the related SDE, generator, stochastic control formulation, and the corresponding dynamic Schrödinger system (see also \cite{zlotchevski2025switchingunbalanced} for a further work on the regime-switching approach to the unbalanced Schr\"odinger bridge problem). In contrast, the present paper's contribution is more algorithmic and quantitative:
we study the Sinkhorn algorithm associated with the regime-switching
Schrödinger bridge problem and prove its exponential convergence in
relative entropy.

The rest of the paper is organised as follows. In Section \ref{sec:RS} we give a detailed description of the studied model, introduce the necessary notations and assumptions, and present the main results. Section \ref{sec:proof-RS} is devoted to the proofs of the two main theorems on stability of the regime-switching Schrödinger bridge problem and exponential convergence of the associated Sinkhorn algorithm. Section \ref{sec:proof-partial} contains the proofs of results on partial terminal observations. Finally, in the Appendix we give a proof of the regularity of the transition density of the reference process in a restricted constant-coefficient case. 

\paragraph{Acknowledgments}
KE, AK and ZR acknowledge the support of the Agence nationale de la recherche, through the PEPR PDE-AI project (ANR-23-PEIA-0004). AK's research is supported by the FMJH Program Gaspard Monge for optimization and operations research and their interactions with data science.  ZR's research is supported by the Finance For Energy Market Research Center and 
the France 2030 grant (ANR-21-EXES-0003).

\section{Hybrid-State Diffusion Processes and Schrödinger Bridge}
\label{sec:RS}

\subsection{Preliminaries}
 
  Our reference dynamics is given by a \textbf{hybrid-state process} $Z_t = (X_t, I_t)$ taking values in the product space $E := \mathbb{R}^d \times \mathcal{I}$, where $\mathcal{I} = \{1, \dots, m\}$ is a finite set of regimes. This process consists of a continuous component $X_t \in \mathbb{R}^d$ and a discrete regime component $I_t \in \mathcal{I}$. The dynamics are governed by the following coupled system governed by drift $ b_i:[0,T]\times\mathbb R^d\to\mathbb R^d $, diffusion coefficient $ \sigma_i:[0,T]\times\mathbb R^d\to\mathbb R^{d\times d} $
and jump rate
$ \lambda_{ij}:[0,T]\times\mathbb R^d\to\mathbb R_+$, for $i, j\in \mathcal{I}$:

\begin{enumerate}
    \item \textbf{Continuous Diffusion:} For a given regime $I_t = i$, the state $X_t$ evolves according to the SDE:
    \begin{equation}
    \label{eq:diffusion_eq}
        dX_t = b_i(t, X_t) dt + \sigma_i(t, X_t) dW_t,
    \end{equation}
    where $W_t$ is a $d$-dimensional Brownian motion.
    
    \item \textbf{Regime Switching:} The pair $(X_t, I_t)$ is a Markov process on $\mathbb{R}^d \times \mathcal{I}$. Its discrete component $I_t$ switches according to state-dependent intensities:
    \begin{equation*}
        \mathbb{P}(I_{t+h} = j \mid I_t = i, X_t = x) = \lambda_{ij}(t, x) h + o(h), \quad h \downarrow 0.
    \end{equation*}
\end{enumerate}

This heuristic description can be made rigorous through the Poisson random
measure construction. Following \cite{zlotchevski2025schrodingerbridgeproblemjump},  let \(N_1(dt,du)\) be a Poisson random measure on \([0,T]\times\mathbb{R}_+\) with intensity \(dt\,du\). Its atoms are pairs \((t,u)\), where \(t\) is the time of a potential switching event and \(u\) is an auxiliary mark used only to determine the type of the switch.

For every \((t,x,i)\), choose pairwise disjoint Borel sets \(\Delta_{ij}(t,x)\subset\mathbb{R}_+\), \(j\neq i\), such that
\begin{equation}
\lvert \Delta_{ij}(t,x)\rvert=\lambda_{ij}(t,x),
\end{equation}
and set \(\Delta_{ii}(t,x)=\varnothing\). Define
\begin{equation}
\alpha(t,x,i,u)
=
\sum_{j\in\mathcal I}
(j-i)\mathbf{1}_{\Delta_{ij}(t,x)}(u).
\end{equation}
Thus, \(\alpha(t,x,i,u)\) is the increment of the regime index caused by the mark \(u\). More precisely, if the current regime is \(i\) and \(u\in\Delta_{ij}(t,x)\), then \(\alpha(t,x,i,u)=j-i\), so that the regime changes from \(i\) to \(j\). If \(u\) belongs to none of these sets, then \(\alpha(t,x,i,u)=0\), and no switch occurs.

The regime process is defined by
\begin{equation}
\label{eq:jump_eq}
I_t
=
I_0+
\int_0^t\int_{\mathbb{R}_+}
\alpha(s,X_{s-},I_{s-},u)\,N_1(ds,du).
\end{equation}

The sample paths of the joint process $Z = (Z_t)_{t \in [0,T]}$ lie in the \textbf{canonical path space} $\Omega_T$, defined as the product of the space of continuous functions and the space of càdlàg (right-continuous with left limits) functions:
\begin{equation*}
    \Omega_T := D([0, T]; \mathbb{R}^d \times \mathcal{I}) = C([0, T]; \mathbb{R}^d) \times D([0, T]; \mathcal{I}).
\end{equation*}
We denote by $P$ the \textbf{path law} of the process $Z$, which is a probability distribution on $(\Omega_T, \mathcal{F}_T)$, where $ \mathcal{F}_T $ is the canonical filtration on $ \Omega_T $.

The classical Schrödinger Bridge Problem (SBP) aims to identify a path law that minimally deviates from the reference process $P$ while interpolating between two given endpoint distributions. In the present hybrid-state setting with regime switching, given
\(\rho,\mu\in\mathcal P(\mathbb R^d\times\mathcal I)\), the
\textbf{regime-switching Schrödinger bridge problem} (RS-SBP) is given by
\begin{equation}
\inf_{Q\ll P} \left\{ \mathcal H(Q\mid P): Q\circ Z_0^{-1}=\rho,\ Q\circ Z_T^{-1}=\mu \right\},
\end{equation}

\noindent Here, $ \mathcal{H} $ denotes the relative entropy between probability measures on the path space $\Omega_T$, more precisely,
\[
\mathcal H(Q\mid P)
:=
\begin{cases}
\displaystyle
\int_{\Omega_T}
\log\!\left(\frac{\mathrm{d}Q}{\mathrm{d}P}\right)
\,\mathrm{d}Q,
& Q\ll P,\\[1ex]
+\infty,
& \text{otherwise}.
\end{cases}
\]

With the usual argument \cite{leonard2014survey}, the dynamic RS-SBP is equivalent to a static entropy optimal transport
problem on the endpoint space. For any admissible path measure $Q \in \mathcal{P}(\Omega_T)$ with $\mathcal H(Q\mid P) < + \infty$, let
\[
\pi := Q\circ (Z_0,Z_T)^{-1},
\qquad
P_{0,T}:=P\circ (Z_0,Z_T)^{-1},
\]
where $Z_t:=(X_t,I_t)$. The additive property of the relative entropy yields
\begin{equation}\label{eq:entropy_decomposition}
\mathcal H(Q\mid P) =
H(\pi\mid P_{0,T}) + \int \mathcal H\bigl(Q(\cdot\mid Z_0 = z_0, Z_T = z_T)
\mid
P(\cdot\mid Z_0 = z_0, Z_T = z_T)
\bigr)
\,\pi(dz_0,dz_T),
\end{equation}
where $H$ stands for the
relative entropy between probability measures on the endpoint space $(\mathbb R^d\times \mathcal I)^2$.
Hence,
$$
H(\pi\mid P_{0,T}) \leq \mathcal H(Q\mid P),
$$
with equality if and only if $Q(\cdot\mid Z_0 = z_0, Z_T = z_T) = P(\cdot\mid Z_0 = z_0, Z_T = z_T)$ for $\pi$-a.e. $(z_0,z_T)$.
Conversely, if $\pi \in \Pi(\rho,\mu)$ is a coupling, then
\begin{equation}\label{eq:static-to-dyn}
Q(\cdot) = \int_{(\mathbb R^d\times \mathcal I)^2} P(\cdot\mid Z_0 = z_0, Z_T = z_T) \pi(dz_0,dz_T)
\end{equation}
provides a corresponding law on path space with
$H(\pi\mid P_{0,T}) = \mathcal H(Q\mid P).$
In particular, if there is an optimal bridge $Q^{\rho,\mu}$ on path space
\begin{equation}
\label{eq:optimal_bridge_conditionals}
Q^{\rho,\mu}(\cdot\mid Z_0 = z_0, Z_T = z_T)
=
P(\cdot\mid Z_0 = z_0, Z_T = z_T),
\end{equation}
for $\pi^{\rho,\mu}$-a.e. $(z_0,z_T)$, where $\pi^{\rho,\mu} := Q^{\rho,\mu}\circ (Z_0,Z_T)^{-1}$. It follows that
\[
\mathcal H(Q^{\rho,\mu}\mid P)
=
H(\pi^{\rho,\mu}\mid P_{0,T}),
\]
and the dynamic RS-SBP is equivalent to the static problem
\begin{equation}
\label{eq:static_SSBP}
\inf_{\pi\in\Pi(\rho,\mu)}
H(\pi\mid P_{0,T}), 
\end{equation}
where $ \Pi( \rho, \mu ) $ denotes the set of probability measures on $ E \times E $ with prescribed endpoint marginals $ \rho $ and $ \mu $, i.e. 
$$
\Pi( \rho, \mu ) =\{ \pi \in \mathcal{P}( E \times E ) \colon \, \pi( A \times E ) = \rho( A ), \, \pi( E \times B ) = \mu( B ) \}
$$
for all measurable $ A, B \subset E $.

It is hence sufficient to work with the static formulation since the properties can be readily translated back to the path space formulation via \eqref{eq:static-to-dyn}.

We now recall a result of \cite[Theorem 2.1]{nutz2022eot} providing the classical characterization of the static regime-switching Schrödinger bridge.

\begin{assumption}
\label{assum:reference_marginals}
Let $
P_{0,T}:=P\circ (Z_0,Z_T)^{-1} $
be the endpoint law of the reference process. Assume that
\[
H(\rho\otimes\mu\mid P_{0,T})<\infty \quad \text{ and } \quad
P_{0,T}\big|_{\operatorname{supp}(\rho\otimes\mu)}
\ll \rho\otimes\mu.
\]
\end{assumption}

Define
\[
p(z_0,z_T)
:=
\frac{
d\!\left(
P_{0,T}\big|_{\operatorname{supp}(\rho\otimes\mu)}
\right)
}{
d(\rho\otimes\mu)
}(z_0,z_T).
\]
In componentwise notation, for $z_0=(x,i)$ and $z_T=(y,j)$, write
\[
p_{ij}(x,y):=p\big((x,i),(y,j)\big).
\]

\begin{theorem}[Characterization of the static regime-switching Schr\"odinger bridge,
\cite{nutz2022eot}]
\label{thm:classic}
Under Assumption \ref{assum:reference_marginals}
the static regime-switching Schr\"odinger bridge problem \eqref{eq:static_SSBP} admits a unique optimal
solution $\pi^{\rho,\mu}\ll P_{0,T}$, characterized as follows.
The Radon--Nikodym derivative of the optimal measure $\pi^{\rho,\mu}$ with respect to the reference measure $P_{0,T}$ satisfies
\begin{equation}
   \frac{d\pi^{\rho,\mu}}{dP_{0,T}}(z_0,z_T) = \tilde{f}_{i_0}(x_0) \tilde{g}_{i_T}(x_T) \quad \text{for }\pi^{\rho,\mu}\text{-a.e. } (z_0,z_T)=((x_0,i_0),(x_T,i_T)).
\end{equation}
where $(\tilde{f}_i)_{i \in \mathcal I}$ and $(\tilde{g}_j)_{j \in \mathcal I}$ are nonnegative measurable functions, unique up to the transformation
\begin{equation}
    (\tilde{f}, \tilde{g}) \sim (\lambda \tilde{f}, \lambda^{-1} \tilde{g}), \quad \lambda > 0,
\end{equation}
and solve the Schr\"odinger system
\begin{equation}
\begin{cases}
    1 = \tilde{f}_i(x) \displaystyle\sum_{j \in \mathcal I} \int_{\mathbb{R}^d} p_{ij}(x, y) \tilde{g}_j(y) \mu_j(dy), & \rho_i\text{-a.e. } x, \\
    1 = \tilde{g}_j(y) \displaystyle\sum_{i \in \mathcal I} \int_{\mathbb{R}^d} p_{ij}(x, y) \tilde{f}_i(x) \rho_i(dx), & \mu_j\text{-a.e. } y.
\end{cases}
\end{equation}
\end{theorem}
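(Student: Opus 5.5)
The plan is to deduce this from the general static entropic optimal transport theory on Polish spaces, \cite[Theorem 2.1]{nutz2022eot}, applied to $\mathcal X=\mathcal Y=E=\mathbb R^d\times\mathcal I$ and reference measure $P_{0,T}$. $E$ is Polish as a product of $\mathbb R^d$ with a finite discrete set, and $\rho,\mu$ are Borel probability measures on it. So the only work is to bring \eqref{eq:static_SSBP} into the form treated there and to unpack the abstract conclusion in componentwise notation.

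\textbf{Step 1: reduction to an EOT problem with cost.} Set $c:=-\log p$ on $\operatorname{supp}(\rho\otimes\mu)$ and $c:=+\infty$ elsewhere. For $\pi\in\Pi(\rho,\mu)$ with $H(\pi\mid P_{0,T})<\infty$, we have $\pi\ll P_{0,T}$ and $\pi$ is concentrated on $\operatorname{supp}(\rho\otimes\mu)$. On that support, $dP_{0,T}=p\,d(\rho\otimes\mu)$ by Assumption \ref{assum:reference_marginals}, so
\[
H(\pi\mid P_{0,T})=H(\pi\mid\rho\otimes\mu)+\int c\,d\pi .
\]
This identity needs care: integrability of $\log p$ against $\pi$ follows from splitting into positive and negative parts and using $H(\pi\mid P_{0,T})<\infty$. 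Finiteness of the value is immediate, since $H(\rho\otimes\mu\mid P_{0,T})<\infty$ and $\rho\otimes\mu\in\Pi(\rho,\mu)$.

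\textbf{Step 2: existence and uniqueness.} Uniqueness follows from strict convexity of $\pi\mapsto H(\pi\mid P_{0,T})$ on the convex set $\Pi(\rho,\mu)$. Existence follows because $\Pi(\rho,\mu)$ is weakly compact (tight marginals) and $H(\cdot\mid P_{0,T})$ is weakly lower semicontinuous. The minimizer $\pi^{\rho,\mu}$ has finite entropy, hence $\pi^{\rho,\mu}\ll P_{0,T}$.

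\textbf{Step 3: product structure and the Schr\"odinger system.} \cite[Theorem 2.1]{nutz2022eot} gives measurable potentials $\varphi,\psi$ such that
\[
\frac{d\pi^{\rho,\mu}}{d(\rho\otimes\mu)}=e^{\varphi\oplus\psi-c}
\]
$\rho\otimes\mu$-a.e. on the support. The potentials are unique up to additive constants and satisfy the marginal equations
\[
\int e^{\varphi(z_0)+\psi(z_T)-c(z_0,z_T)}\,\mu(dz_T)=1\quad\rho\text{-a.e.},
\]
and symmetrically in $z_0$. Put $\tilde f=e^{\varphi}$ and $\tilde g=e^{\psi}$, and write $\tilde f_i(x)=\tilde f(x,i)$, $\tilde g_j(y)=\tilde g(y,j)$. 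Since $e^{-c}=p$, we obtain $d\pi^{\rho,\mu}/dP_{0,T}=\tilde f_{i_0}(x_0)\tilde g_{i_T}(x_T)$, valid $\pi^{\rho,\mu}$-a.e. Disintegrating $\mu=\sum_j\mu_j\otimes\delta_j$, with $\mu_j(dy):=\mu(dy\times\{j\})$ and similarly for $\rho$, turns the marginal equations into exactly the stated componentwise system. The additive non-uniqueness of $(\varphi,\psi)$ becomes the multiplicative $(\lambda\tilde f,\lambda^{-1}\tilde g)$ invariance.

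\textbf{Main obstacle.} The delicate point is the measure-theoretic bookkeeping around $\operatorname{supp}(\rho\otimes\mu)$: $P_{0,T}$ need not be absolutely continuous with respect to $\rho\otimes\mu$ globally. One must check that restricting to the support loses nothing for finite-entropy couplings, and that the uniqueness of potentials in the cited theorem is only $\rho$- and $\mu$-a.e. For the latter, I would first invoke the uniqueness statement of \cite{nutz2022eot} directly, which holds whenever the value is finite.
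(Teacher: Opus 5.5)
Your route is the paper's route: the paper gives no argument beyond citing \cite[Theorem 2.1]{nutz2022eot}, and your Steps 2 and 3 correctly unpack that theorem once the reduction is set up properly. One claim in Step 1 is false, however. $H(\pi\mid P_{0,T})<\infty$ does not make $\log p$ integrable against $\pi$, and the identity $H(\pi\mid P_{0,T})=H(\pi\mid\rho\otimes\mu)+\int c\,d\pi$ can read $0=\infty-\infty$, even at the optimizer.

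Here is an example. Take a single regime and $\rho=\mu$ uniform on $[0,1]$. Let $p=\tfrac12+\tfrac12\sum_n \ell_n^{-1}\mathbf 1_{J_n\times J_n}$, where the intervals $J_n$ partition $[0,1]$ and their lengths satisfy $\ell_n\propto 1/(n\log^2 n)$, so that $\sum_n\ell_n\log(1/\ell_n)=\infty$. Then:
\begin{itemize}
\item $p$ has uniform marginals and $-\log p\le\log 2$, so Assumption~\ref{assum:reference_marginals} holds and $\pi^{\rho,\mu}=P_{0,T}$;
\item yet $\int p\log p\,d(\rho\otimes\mu)=+\infty$, so $H(\pi^{\rho,\mu}\mid\rho\otimes\mu)=+\infty$ and $\int c\,d\pi^{\rho,\mu}=-\infty$.
\end{itemize}
So the minimizer of \eqref{eq:static_SSBP} is not even in the domain of the additive functional $H(\cdot\mid\rho\otimes\mu)+\int c\,d(\cdot)$. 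The additive cost form is a faithful reformulation only under extra integrability, for instance when $c$ is bounded, as in the later sections.

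The fix is to bypass the additive form. Every $\pi\in\Pi(\rho,\mu)$, not only those of finite entropy, is concentrated on $S:=\operatorname{supp}(\rho\otimes\mu)$. Hence
\[ H(\pi\mid P_{0,T})=H(\pi\mid R)-\log P_{0,T}(S), \qquad R:=P_{0,T}|_S/P_{0,T}(S). \]
Assumption~\ref{assum:reference_marginals} says exactly that $R\sim\rho\otimes\mu$, with $dR/d(\rho\otimes\mu)=p/P_{0,T}(S)$ and $H(\rho\otimes\mu\mid R)<\infty$. This is the reference-measure setting of \cite[Theorem 2.1]{nutz2022eot}, which yields $d\pi^{\rho,\mu}/dR=e^{\varphi\oplus\psi}$ directly. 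Your Step 3 then goes through verbatim with $e^{-c}$ replaced by $p$, absorbing the constant $P_{0,T}(S)$ into $\tilde f$. The marginal equations, the componentwise disintegration and the $(\lambda\tilde f,\lambda^{-1}\tilde g)$ invariance are all unaffected.
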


In particular, the optimal path measure $Q^{\rho,\mu}$ uniquely exists as well and satisfies thanks to the discussion above
\begin{equation*}
    \frac{dQ^{\rho,\mu}}{dP}((Z_t)_{t\in[0,T]})=\frac{d\pi^{\rho,\mu}}{dP_{0,T}}(Z_0,Z_T) \quad Q^{\rho,\mu}\text{-a.e. } (Z_t)_{t\in[0,T]}
\end{equation*}

\begin{remark}
Note that \cite{zlotchevski2025schrodingerbridgeproblemjump} gives a result on the characterization of the optimal path measure $Q^{\rho,\mu}$ as a system of SDEs min terms of optimal drift $b^\ast$, volatility $\sigma^\ast$ and switching rates $\lambda^\ast$ of the associated process. 

Let the function \(h=(h_i)_{i\in\mathcal I}\) be defined by
\[
h_i(t,x)
:=
\mathbb{E}_{\mathbb{P}}
\!\left[
\tilde g_{I_T}(X_T)
\mid
X_t=x,\ I_t=i
\right],
\qquad
(t,x,i)\in[0,T]\times\mathbb{R}^d\times\mathcal I.
\]
Under some mild conditions on the drift $b$, volatility $\sigma$ and switching rates $\lambda$ of the reference process, the strong Markov property for the references process and regularity, positivity and harmonicity assumptions on $h$ (see Assumptions (C), (G) and (H) of \cite{zlotchevski2025schrodingerbridgeproblemjump}), we have that the optimal triplet
\(
(b^\ast,\sigma^\ast,\lambda^\ast)
\)
inducing the path law \(Q^{\rho,\mu}\) is given by:
    \begin{equation}
    \begin{aligned} 
    \sigma_i^*(t, x) &= \sigma_i(t, x), \\ 
    b_i^*(t, x) &= b_i(t, x) + a_i(t, x) \nabla_x \log h_i(t, x), \\ 
    \lambda_{ij}^*(t, x) &= \lambda_{ij}(t, x) \frac{h_j(t, x)}{h_i(t, x)}, \quad j \neq i, 
    \end{aligned}
    \end{equation}
where
\[
a_i(t,x)
=
\sigma_i(t,x)\sigma_i(t,x)^\top
\]
and the function \(h\) satisfies the following
coupled Kolmogorov backward system:
\begin{equation}
    \begin{cases} 
    \partial_t h_i(t, x) + b_i(t, x) \cdot \nabla_x h_i(t, x) + \frac{1}{2} \text{Tr}(a_i(t, x) \nabla_x^2 h_i(t, x)) \\ 
    \qquad + \displaystyle \sum_{j \neq i} \lambda_{ij}(t, x) \left( h_j(t, x) - h_i(t, x) \right) = 0, & (t, x, i) \in [0, T) \times \mathbb{R}^d \times \mathcal{I}, \\ 
    h_i(T, x) = \tilde g_i(x), & x \in \mathbb{R}^d, i \in \mathcal{I}.
    \end{cases}
    \end{equation}

\end{remark}

Let us now draw the link to entropic optimal transport, and identify the corresponding cost function.
Let \(\rho_{\mathrm{ref}}:=P\circ Z_0^{-1}\) be the initial marginal of the reference process, and denote by \(\rho_{\mathrm{ref},i}\) the density of its \(i\)-th component with respect to the Lebesgue measure. From now on, we assume that the transition kernel of the reference process admits a density with respect to the Lebesgue measure
\begin{equation}
\label{eq:trans_density}
P(X_T \in dy, I_T = j \mid X_0 = x, I_0 = i) = r_{ij}(0,x;T,y)dy.
\end{equation}
Later on, we impose stronger conditions on the reference process to guarantee our main theorems (see Assumption \ref{assum:map_c2}), so we do not go into technical assumptions when this holds true to avoid technical overload.

The reference endpoint density $p_{ij}(x,y)$, now understood as the density of $P_{0,T}\big|_{\operatorname{supp}(\rho\otimes\mu)}$ with respect to
$\rho\otimes\mu$, can be decomposed as
\[
p_{ij}(x,y)
=
\frac{\rho_{\mathrm{ref},i}(x) r_{ij}(0,x;T,y)}
{\rho_i(x)\mu_j(y)}.
\]

For any \(\pi\in\Pi(\rho,\mu)\) such that \(H(\pi\mid P_{0,T})<\infty\), writing $\pi_{ij}(x,y)$ for the density of $\pi_{ij}$ with respect to
$\rho_i\otimes\mu_j$, we have
\[
\begin{aligned}
H(\pi \mid P_{0,T}) 
&= \sum_{i,j \in \mathcal{I}} \iint \log\!\left( \frac{\pi_{ij}(x,y)}{\frac{\rho_{\mathrm{ref},i}(x) r_{ij}(0,x;T,y)}{\rho_i(x)\mu_j(y)}} \right) \pi_{ij}(x,y) \, \rho_i(\mathrm{d}x)\mu_j(\mathrm{d}y) \\
&= \sum_{i,j \in \mathcal{I}} \iint \log\!\left( \frac{\pi_{ij}(x,y)}{\rho_i(x)\mu_j(y)} \right) \pi_{ij}(x,y) \, \rho_i(\mathrm{d}x)\mu_j(\mathrm{d}y) \\
&\quad - \sum_{i,j \in \mathcal{I}} \iint \log r_{ij}(0,x;T,y) \, \pi_{ij}(x,y) \, \rho_i(\mathrm{d}x)\mu_j(\mathrm{d}y) + \mathrm{constant},
\end{aligned}
\]
where for suitable functions $f_i,g_j: \mathbb{R}^d \rightarrow \mathbb{R}$
\[
\begin{aligned}
\mathrm{constant}
&=
\sum_{i,j\in\mathcal I}
\iint
\bigl(f_i(x)+g_j(y)\bigr)\pi_{ij}(x,y)\,
\rho_i(\mathrm dx)\mu_j(\mathrm dy) \\
&=
\sum_{i\in\mathcal I}\int f_i(x)\,\rho_i(\mathrm dx)
+
\sum_{j\in\mathcal I}\int g_j(y)\,\mu_j(\mathrm dy).
\end{aligned}
\]
Hence the constant is independent of \(\pi\), since
\(\pi\in\Pi(\rho,\mu)\).

Define the cost function by
\begin{equation}\label{eq:def-cost}
c_{ij}(x,y):=-\log r_{ij}(0,x;T,y).
\end{equation}
Then
\[
H(\pi\mid P_{0,T})
=
H(\pi\mid \rho\otimes\mu)
+
\sum_{i,j\in \mathcal I}
\iint
c_{ij}(x,y)
\pi_{ij}(x,y)\,\rho_i(dx)\mu_j(dy)
+
\mathrm{constant}.
\]
Hence, the static Schrödinger problem is equivalent to a standard entropic optimal transport problem with
cost $c_{ij}(x,y)$.

Finally, the factorized form of the optimal coupling can be rewritten accordingly. From
Theorem~\ref{thm:classic}, we have
\[
\pi^{\rho,\mu}_{ij}(dx,dy)
=
\tilde f_i(x)p_{ij}(x,y)\tilde g_j(y)\,
\rho_i(dx)\mu_j(dy).
\]
Substituting
\begin{equation}
\label{eq:change_kernel}
p_{ij}(x,y) = \frac{\rho_{\mathrm{ref},i}(x) r_{ij}(0,x;T,y)}{\rho_i(x)\mu_j(y)}
\end{equation}
and defining
\[
f_i(x)
:=
\tilde f_i(x)\frac{\rho_{\mathrm{ref},i}(x)}{\rho_i(x)},
\qquad
g_j(y)
:=
\frac{\tilde g_j(y)}{\mu_j(y)},
\]
we obtain
\[
\pi^{\rho,\mu}_{ij}(dx,dy)
=
f_i(x)r_{ij}(0,x;T,y)g_j(y)\,
\rho_i(dx)\mu_j(dy).
\]

\paragraph{Sinkhorn iteration.}
Based on the static EOT formulation with the transition kernel $r$ as the reference
kernel, the Schrödinger system can be solved using the classical Sinkhorn iteration
(also known as the Iterative Proportional Fitting Procedure, or IPFP). Given an
arbitrary family of strictly positive initial values
$g^0=(g_j^0)_{j\in\mathcal I}$, the iteration is defined recursively as:
\[
f_i^{n+1}(x)
=
\frac{1}{
\sum_{j\in\mathcal I}
\int_{\mathbb R^d}
r_{ij}(0,x;T,y)g_j^n(y)\,\mu_j(dy)
},
\qquad
i\in\mathcal I,
\]
and
\[
g_j^{n+1}(y)
=
\frac{1}{
\sum_{i\in\mathcal I}
\int_{\mathbb R^d}
r_{ij}(0,x;T,y)f_i^{n+1}(x)\,\rho_i(dx)
},
\qquad
j\in\mathcal I.
\]
Accordingly, after the first half-step update, the intermediate coupling is defined as:
\begin{equation}
\pi_{ij}^{n+1,n}(dx,dy)
=
f_i^{n+1}(x)
r_{ij}(0,x;T,y)
g_j^n(y)\,
\rho_i(dx)\mu_j(dy),
\label{eq:half_step_coupling}
\end{equation}
and upon completing the second half-step update, the full-step coupling is defined as:
\begin{equation}
\pi_{ij}^{n+1,n+1}(dx,dy)
=
f_i^{n+1}(x)
r_{ij}(0,x;T,y)
g_j^{n+1}(y)\,
\rho_i(dx)\mu_j(dy).
\label{eq:full_step_coupling}
\end{equation}

We also introduce the corresponding intermediate marginals generated by the Sinkhorn
iteration. For each $n\ge 0$, define
\begin{equation}
\label{eq:mu_definitions}
\rho^{n,n}(dz_1)
:=
\pi^{n,n}\bigl(dz_1\times(\mathbb R^d\times\mathcal I)\bigr),
\qquad
\mu^{n+1,n}(dz_2)
:=
\pi^{n+1,n}\bigl((\mathbb R^d\times\mathcal I)\times dz_2\bigr).
\end{equation}
Equivalently, in componentwise form,
\[
\frac{d\rho_i^{n,n}}{d\rho_i}(x)
=
f_i^n(x)
\sum_{j\in\mathcal I}
\int_{\mathbb R^d}
r_{ij}(0,x;T,y)g_j^n(y)\,\mu_j(dy),
\qquad
i\in\mathcal I,
\]
and
\[
\frac{d\mu_j^{n+1,n}}{d\mu_j}(y)
=
g_j^n(y)
\sum_{i\in\mathcal I}
\int_{\mathbb R^d}
r_{ij}(0,x;T,y)f_i^{n+1}(x)\,\rho_i(dx),
\qquad
j\in\mathcal I.
\]
Moreover, one has
\[
\pi^{n,n}\in\Pi(\rho^{n,n},\mu),
\qquad
\pi^{n+1,n}\in\Pi(\rho,\mu^{n+1,n}).
\]

\subsection{Notations}
\label{subsec:notation}

\noindent\textbf{Notation 1.} Given \(z=(x,i)\in \mathbb{R}^d\times \mathcal I\) and a function \(f:\mathbb{R}^d\times \mathcal I\to\mathbb{R}\), we may equivalently view \(f\) as a family \((f_i)_{i\in\mathcal I}\) of functions on \(\mathbb{R}^d\), defined by
\[
f_i(x):=f(x,i)=f(z), \qquad x\in\mathbb{R}^d,\ i\in\mathcal I,\ z=(x,i).
\]
Accordingly, we shall use the notations \(f(z)\) and \(f_i(x)\) interchangeably whenever \(z=(x,i)\).

\noindent\textbf{Notation 2.} Extending the above convention, for a function \(f:(\mathbb{R}^d\times \mathcal I)^2\to\mathbb{R}\), we write
\[
f_{ij}(x,y):=f\big((x,i),(y,j)\big), \qquad x,y\in\mathbb{R}^d,\ i,j\in\mathcal I.
\]
Accordingly, we shall use the notations \(f\big((x,i),(y,j)\big)\) and \(f_{ij}(x,y)\) interchangeably.

\noindent\textbf{Notation 3.} For a measure \(\eta\) on
\(\mathbb{R}^d\times\mathcal I\), define its \(i\)-th component by
\[
\eta_i(A)
:=
\eta\bigl(A\times\{i\}\bigr),
\qquad
A\subset\mathbb{R}^d\ \text{Borel},
\qquad
i\in\mathcal I.
\]
Its discrete marginal is denoted by \(\bar\eta\), namely
\begin{equation}
\label{eq:discrete_marginal}
\bar\eta(i)
:=
\eta_i(\mathbb{R}^d).
\end{equation}
Whenever \(\bar\eta(i)>0\), we write \(\hat\eta_i\) for the conditional
probability measure on \(\mathbb{R}^d\) given \(i\), defined by
\begin{equation}
\label{eq:conditional_law}
\hat\eta_i(A)
:=
\frac{\eta_i(A)}{\bar\eta(i)}.
\end{equation}
Thus \(\eta_i=\bar\eta(i)\hat\eta_i\). In particular, we use
\(\rho_i,\bar\rho,\hat\rho_i\) for the initial marginal and
\(\mu_j,\bar\mu,\hat\mu_j\) for the terminal marginal.

\paragraph{Notation for the stability estimate.}
In view of the static reduction of the regime-switching Schrödinger bridge problem discussed above, the stability estimate will first be formulated at the endpoint level. We write
\[
E := \mathbb{R}^d \times \mathcal{I}.
\]
Fix a first marginal $\rho \in \mathcal{P}(E)$. For any terminal marginal $\eta \in \mathcal{P}(E)$, we denote by $\pi^{\rho,\eta}$ the unique optimizer of the static entropic optimal transport problem with marginals $\rho$ and $\eta$, associated with the cost $c$ introduced through the reference transition kernel above. Equivalently,
\[
\pi^{\rho,\eta}
\in
\operatorname*{argmin}_{\pi \in \Pi(\rho,\eta)}
\left\{
H(\pi \mid \rho \otimes \eta)
+
\int_{E \times E} c(z_0,z_T)\,\pi(\mathrm{d}z_0,\mathrm{d}z_T)
\right\}.
\]
In the stability theorem below, the two terminal marginals to be compared will be denoted by $\mu$ and $\nu$, and the corresponding optimal endpoint couplings will be written as $\pi^{\rho,\mu}$ and $\pi^{\rho,\nu}$.

\subsection{Main Results for the Regime Switching Schrödinger Bridge}
\label{subsec:main_result}
We are now ready to state the main results concerning. For this let us group up the main set of assumptions.

In order to be able to tackle our problem we need a suitable version of the Talagrand's transport inequality which classically holds for a measure \(\mu\in\mathcal{P}(\mathbb{R}^d)\) with constant \(C>0\) if
\begin{equation}
\label{eq:t2_euclidean}
W_2^2(\eta,\mu)
\le
C H(\eta\mid\mu),
\qquad
\forall\,\eta\in\mathcal{P}(\mathbb{R}^d).
\end{equation}

The remaining assumptions we impose on the marginal constraints are then as follows.

\begin{assumption}
\label{assum:measures}
The endpoint measures \(\rho, \,\mu \in \mathcal{P}(\mathbb{R}^d)\) satisfy the following conditions.
Let \(\bar\mu\) and \((\hat\mu_i)_{i\in\mathcal I}\) be defined
by \eqref{eq:discrete_marginal} and \eqref{eq:conditional_law}.
\begin{enumerate}
\item \textbf{Absolute continuity.}
For every \(i,j\in\mathcal{I}\) with \(\bar{\rho}(i)>0\) and \(\bar{\mu}(j)>0\), the conditional laws \(\hat{\rho}_i\) and \(\hat{\mu}_j\) are absolutely continuous with respect to the Lebesgue measure on \(\mathbb{R}^d\).

\item \textbf{Compact support.}
The endpoint measures \(\rho\) and \(\mu\) are compactly supported on \(\mathbb{R}^d\times\mathcal{I}\).

\item \textbf{Talagrand \(T_2\) inequality at one endpoint.}
The measure \(\mu\) satisfies the following regime-wise Talagrand \(T_2\)
inequality with constant \(C_{\mu}>0\)
\begin{equation}
\label{eq:t2_regimewise}
W_2^2(\eta,\hat\mu_i)
\le
C_\mu H(\eta\mid\hat\mu_i),
\qquad
\forall\,i\in\mathcal I\ \text{such that}\ \bar\mu(i)>0,
\quad
\forall\,\eta\in\mathcal{P}(\mathbb{R}^d).
\end{equation}
\end{enumerate}
\end{assumption}

We additionally require some regularity on the reference process $P$, which we directly state as assumption on the transition density.

\begin{assumption}
\label{assum:map_c2}
Let the transition kernel of the reference process admit a density $r$ defined by \eqref{eq:trans_density}.
For every pair $i, j \in \mathcal{I}$ with $\bar{\rho}(i)\bar{\mu}(j) > 0$, let the map
\[
(x,y)\longmapsto r_{ij}(0,x;T,y)
\]
be of class $\mathcal{C}^2$ and strictly positive on 
$\operatorname{supp}(\hat{\rho}_i)\times \operatorname{supp}(\hat{\mu}_j)$.
\end{assumption}

As above, we denote by \(c_{ij}\) the cost function associated with the reference kernel, namely
\[
c_{ij}(x,y)=-\log r_{ij}(0,x;T,y).
\]

For later use, we introduce the global constants
\begin{equation}\label{eq:M_constants}
\begin{aligned}
M_0
&:=
\max_{i,j:\,\bar\rho(i)\bar\mu(j)>0}
\ \sup_{(x,y)\in \operatorname{supp}(\hat{\rho}_i)\times \operatorname{supp}(\hat{\mu}_j)}
|c_{ij}(x,y)|, \\
M_1
&:=
\max_{i,j:\,\bar\rho(i)\bar\mu(j)>0}
\ \sup_{(x,y)\in \operatorname{supp}(\hat{\rho}_i)\times \operatorname{supp}(\hat{\mu}_j)}
\|\nabla c_{ij}(x,y)\|, \\
M_2
&:=
\max_{i,j:\,\bar\rho(i)\bar\mu(j)>0}
\ \sup_{(x,y)\in \operatorname{supp}(\hat{\rho}_i)\times \operatorname{supp}(\hat{\mu}_j)}
\|\nabla^2 c_{ij}(x,y)\|.
\end{aligned}
\end{equation}

By Assumptions~~\ref{assum:map_c2}, all these constants are finite. Here \(\nabla c_{ij}\) and \(\nabla^2 c_{ij}\) denote respectively the full gradient and the full Hessian with respect to the variable \((x,y)\).

\begin{remark}
The constants \(M_1\) and \(M_2\) defined above control derivatives
with respect to both variables \(x\) and \(y\). This symmetric
formulation is convenient, but stronger than what is needed below.
More precisely, if the \(T_2\) condition in
Assumption~\ref{assum:measures} is imposed on the terminal marginal
\(\mu\), only the bounds on \(\nabla_y c_{ij}\) and
\(\nabla^2_{yy} c_{ij}\) are required. If instead the \(T_2\) condition is
imposed on the initial marginal \(\rho\), then, by the transposition
argument in Remark~\ref{rem:transposition}, it is enough to use the
corresponding bounds on \(\nabla_x c_{ij}\) and
\(\nabla^2_{xx} c_{ij}\).
Since we are not aiming to optimize on the constants here, we decided to keep it that way.
\end{remark}

The above assumptions are satisfied in a variety of cases as the following examples show.

\begin{example}
\label{ex:t2_compact}
Assumption~\ref{assum:measures} is satisfied, for instance, if one of the two endpoint families consists of uniformly strongly log-concave measures supported on compact convex sets. We state the condition for the initial marginal; the analogous condition on the terminal marginal yields the same conclusion.

For each $i\in\mathcal I$ such that $\rho(\mathbb R^d\times\{i\})>0$, let
\[
K_i:=\operatorname{supp}(\hat{\rho}_i).
\]
Assume that $K_i$ is compact and convex with nonempty interior, and that
\[
\hat{\rho}_i(\mathrm{d}x)
=
\frac{1}{Z_i}e^{-V_i(x)}\mathbf{1}_{K_i}(x)\,\mathrm{d}x,
\]
where $V_i\in C^2(K_i^\circ)$ and $ Z_i $ is the normalisation constant. Suppose that there exists $\kappa>0$, such that
\[
\nabla^2V_i(x)\succeq\kappa I_d,
\qquad
x\in K_i^\circ.
\]
Then, for every $\eta\in\mathcal P(\mathbb R^d)$,
\[
W_2^2(\eta,\hat{\rho}_i)
\leq
\frac{2}{\kappa}H(\eta\mid\hat{\rho}_i).
\]
Hence, the family $(\hat{\rho}_i)_i$ satisfies a Talagrand $T_2$ inequality, with constant $2/\kappa$.

Indeed, since $K_i$ is convex, the above Hessian bound implies the $\kappa$-displacement convexity of $H(\,\cdot\,\mid\hat{\rho}_i)$. Applying this estimate along the Wasserstein geodesic from $\hat{\rho}_i$ to $\eta$, and using the nonnegativity of relative entropy, gives the stated $T_2$ inequality.

Such log-concave assumptions are standard sufficient conditions for Talagrand inequalities and arise naturally in the analysis of entropic optimal transport; see, for instance, \cite{ChiariniConfortiGrecoTamanini2024SemiconcStabConv,ConfortiDurmusGreco2023SinkhornContraction}.
\end{example}

\begin{example}
\label{ex:uniform_t2}
Assumption~\ref{assum:measures} is also satisfied if one of the two endpoint
families has compact convex supports and admits densities uniformly bounded
from above and below. More precisely, it is enough that either the following
condition holds for the initial marginal, or the analogous condition holds for
the terminal marginal.

For the initial marginal, suppose that for each \(i\in\mathcal I\) such that
\(\bar\rho(i)>0\), the support
\(K_i:=\operatorname{supp}(\hat{\rho}_i)\) is compact and convex, and
\(\hat{\rho}_i\) admits a density \(q_i\) on \(K_i\) satisfying
\[
\hat{\rho}_i(\mathrm{d}x)
=
q_i(x)\mathbf{1}_{K_i}(x)\,\mathrm{d}x,
\qquad
0<m\leq q_i(x)\leq M<\infty,
\qquad
x\in K_i,
\]
where \(m\) and \(M\) are independent of \(i\).

Then the family \(\bigl(\hat{\rho}_i\bigr)_{i\in\mathcal I}\) satisfies a
Talagrand \(T_2\) inequality. The same conclusion holds if the above
assumptions are imposed instead on the terminal conditional laws
\(\bigl(\hat{\mu}_j\bigr)_{j\in\mathcal I}\).

This hypothesis is stronger than assuming that the family
\(\bigl(\hat{\rho}_i\bigr)_{i\in\mathcal I}\) satisfies a Talagrand
\(T_2\) inequality. Indeed, it implies a logarithmic Sobolev inequality (LSI) and
hence a Talagrand \(T_2\) inequality; cf.\
\cite[Proposition~5.3 and the discussion therein]{BobkovLedoux2000} and
\cite[Proposition~5.1.6]{BakryGentilLedoux2013}.
\end{example}

\begin{example}
\label{ex:c2}
Assumption~\ref{assum:map_c2} holds in particular for constant-coefficient reference processes. Indeed, assume that \(b_i(t,x)\equiv b_i\), \(\sigma_i(t,x)\equiv \sigma_i I_d\), and \(\lambda_{ij}(t,x)\equiv \lambda_{ij}\) for all \(i,j\in\mathcal I\), where \(\sigma_i>0\) for every \(i\in\mathcal I\) and \(\lambda_{ij}>0\) for every \(i\neq j\). Then, for every \(T>0\), the transition density \(r_{ij}(0,x;T,y)\) is strictly positive and of class \(C^2\) in \((x,y)\). The proof is given in Appendix~\ref{app:c2}.
\end{example}

We are now in place to state the main stability result for the regime switching Schrödinger problem.

\begin{theorem}
[Stability in relative entropy]
\label{thm:stable}
Let Assumptions \ref{assum:reference_marginals}, \ref{assum:measures} and~\ref{assum:map_c2} hold for \((\rho,\mu) \in \mathcal{P}(\mathbb{R}^d)^2\) and the reference process $P \in \mathcal{P}(\Omega_T)$.

Let \(M_0\), \(M_1\), and \(M_2\) be defined in \eqref{eq:M_constants}, and set
\[
B
:=
\max\left\{
M_2+\frac{M_1^2}{2},
\,4M_0
\right\}.
\]
Then, for every \(\nu\in\mathcal P(\mathbb R^d\times\mathcal I)\) such that
\begin{equation}
\label{eq:mutual_finite_entropy}
H(\mu\mid\nu)+H(\nu\mid\mu)<+\infty,
\end{equation}
one has
\[
\begin{aligned}
\mathcal H\left(Q^{\rho,\mu}\mid Q^{\rho,\nu}\right)
&=
H\left(\pi^{\rho,\mu}\mid \pi^{\rho,\nu}\right) \\
&\le
H(\mu\mid\nu)
+
BC_\mu H(\nu\mid\mu)
+
B\|\bar\mu-\bar\nu\|_{\mathrm{TV}} \\
&\le
H(\mu\mid\nu)
+
BC_\mu H(\nu\mid\mu)
+
B\|\mu-\nu\|_{\mathrm{TV}}.
\end{aligned}
\]
\end{theorem}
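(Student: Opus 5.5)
The plan follows the strategy of \cite{ChiariniConfortiGrecoTamanini2024SemiconcStabConv}. We first reduce to the static level and then use the Schr\"odinger potentials to produce an explicit expression for $H(\pi^{\rho,\mu}\mid\pi^{\rho,\nu})$.

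\textbf{Static reduction.} The equality $\mathcal H(Q^{\rho,\mu}\mid Q^{\rho,\nu})=H(\pi^{\rho,\mu}\mid\pi^{\rho,\nu})$ follows from \eqref{eq:optimal_bridge_conditionals}. Both bridges share the bridge kernel $P(\cdot\mid Z_0=z_0,Z_T=z_T)$, so the decomposition \eqref{eq:entropy_decomposition}, applied with $Q^{\rho,\nu}$ in place of $P$, kills the conditional term.

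\textbf{Potentials and the log-ratio.} By Theorem~\ref{thm:classic} and the subsequent rewriting, write
\[
\pi^{\rho,\mu}_{ij}=e^{\varphi^\mu_i(x)+\psi^\mu_j(y)-c_{ij}(x,y)}\rho_i(dx)\mu_j(dy),
\]
and similarly for $\nu$ with potentials $(\varphi^\nu,\psi^\nu)$. Then
\[
\log\frac{d\pi^{\rho,\mu}}{d\pi^{\rho,\nu}}=(\varphi^\mu-\varphi^\nu)(z_0)+(\psi^\mu-\psi^\nu)(z_T)+\log\frac{d\mu}{d\nu}(z_T).
\]
Integrating against $\pi^{\rho,\mu}$ gives
\[
H(\pi^{\rho,\mu}\mid\pi^{\rho,\nu})=H(\mu\mid\nu)+\int(\varphi^\mu-\varphi^\nu)\,d\rho+\int(\psi^\mu-\psi^\nu)\,d\mu .
\]
It therefore remains to bound the potential term, which is the dual gap
\[
\Delta:=\mathcal D^{\mu}(\varphi^\mu,\psi^\mu)-\mathcal D^{\mu}(\varphi^\nu,\psi^\nu)
\]
for the dual functional with marginals $(\rho,\mu)$. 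This uses the normalization $\int e^{\varphi+\psi-c}\,d\rho\,d\mu=1$, and the correction from $\int e^{\varphi^\nu\oplus\psi^\nu-c}\,d\rho\,d\mu\neq 1$ has to be handled. Following \cite{ChiariniConfortiGrecoTamanini2024SemiconcStabConv}, the gap is controlled by comparing $\psi^\nu$ with the $c$-transform $\psi':=(\varphi^\nu)^{c}$ taken against $\rho$, and by working on $\operatorname{supp}\mu$.

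\textbf{Regularity of the potentials.} On the compact supports, Assumption~\ref{assum:map_c2} gives uniform properties of the potentials. Each $\psi_j=-\log\sum_i\int e^{\varphi_i(x)-c_{ij}(x,y)}\rho_i(dx)$ is $M_1$-Lipschitz in $y$, and it is semiconcave, with Hessian bounded above by $M_2+M_1^2$, or by $M_2+M_1^2/2$ after the standard log-sum-exp covariance estimate. Its oscillation over $y$ is at most $2M_0$ uniformly in $j$, since the ratios $e^{-c}$ lie in $[e^{-M_0},e^{M_0}]$. The key point is that these bounds hold for every regime $j$ and do not depend on the terminal marginal.

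\textbf{Splitting the terminal mismatch.} The difference between $\mu$ and $\nu$ is split into a discrete and a conditional part: $\mu_j=\bar\mu(j)\hat\mu_j$ and $\nu_j=\bar\nu(j)\hat\nu_j$.
\begin{itemize}
\item The discrete mismatch moves mass between regimes. Using the oscillation bound, and after normalizing the potentials, $|\psi|$ and the exponential factors are controlled by $M_0$-type constants, giving a contribution of at most $4M_0\|\bar\mu-\bar\nu\|_{\mathrm{TV}}$.
\item The within-regime mismatch is handled as in \cite{ChiariniConfortiGrecoTamanini2024SemiconcStabConv}. The semiconcavity of $\psi$ gives, for an optimal coupling $\gamma_j$ of $\hat\nu_j$ and $\hat\mu_j$, a second-order Taylor bound
\[
\int\psi\,d(\hat\mu_j-\hat\nu_j)\le \int\nabla\psi\cdot(y'-y)\,d\gamma_j+\tfrac{M_2+M_1^2/2}{2}W_2^2(\hat\nu_j,\hat\mu_j),
\]
where the first-order term is absorbed through the optimality (first-order) condition of the potentials.
\end{itemize}
We then apply \eqref{eq:t2_regimewise} and use $\sum_j\bar\nu(j)H(\hat\nu_j\mid\hat\mu_j)\le H(\nu\mid\mu)$ (chain rule) to obtain $BC_\mu H(\nu\mid\mu)$. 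The last inequality of the statement is immediate, since the TV distance of marginals is bounded by that of the joint measures.

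\textbf{Main obstacle.} The hardest step is propagating the semiconcavity and oscillation bounds through the coupled regime system while cancelling the first-order term. In the classical case, the first-order term vanishes by a symmetry, or displacement-convexity, argument along a single $W_2$-geodesic. Here each regime requires its own coupling $\gamma_j$, and the regime weights $\bar\nu(j)$ and $\bar\mu(j)$ differ. The plan is to first replace $\nu$ by the auxiliary measure $\tilde\nu_j:=\bar\mu(j)\hat\nu_j$, which has the same discrete marginal as $\mu$. The $\mu\to\tilde\nu$ step is then treated by the semiconcavity argument regime by regime, and the $\tilde\nu\to\nu$ step by the crude $L^\infty$ bound on the potentials, which yields the $4M_0\|\bar\mu-\bar\nu\|_{\mathrm{TV}}$ term.
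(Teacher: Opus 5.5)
\textbf{Verdict: there is a genuine gap.} Your identity $H(\pi^{\rho,\mu}\mid\pi^{\rho,\nu})=H(\mu\mid\nu)+\Delta$ is correct, but you give no mechanism that actually bounds $\Delta$.

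\textbf{Missing competitor.} Write $F(\eta)$ for the optimal EOT value with marginals $(\rho,\eta)$. In your sign convention, $\Delta=F(\mu)-F(\nu)-\int\psi^\nu\,d(\mu-\nu)$. So $\Delta$ is the optimality gap of a maximization problem, and any upper bound on it needs an upper bound on $F(\mu)$, i.e.\ an explicit competitor in $\Pi(\rho,\mu)$ built from $\pi^{\rho,\nu}$. You never construct one. Without it, Taylor-expanding $\psi$ alone against $\hat\mu_j-\hat\nu_j$ cannot work. The first-order term $\int\nabla\psi\cdot(y'-y)\,d\gamma_j$ is of order $W_2$, not $W_2^2$, and it does not vanish by itself. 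The first-order condition, obtained by differentiating $\int e^{\varphi^\nu(x)+\psi^\nu(y)-c(x,y)}\rho(dx)=1$ in $y$, only says that $\nabla\psi^\nu_j(y)$ is the mean of $\nabla_y c(\cdot,(y,j))$ under $\pi^{\rho,\nu}(\cdot\mid y,j)$. It can therefore cancel only against a cost variation $\int\nabla_y c\cdot(y'-y)$, and that term appears only when a competitor plan moves the second variable. Accordingly, the semiconcavity that matters is that of $y\mapsto c_{ij}(x,y)-\psi^\nu_j(y)$ (the paper's $c+\psi$), with constant $2M_2+M_1^2$ (Lemma~\ref{lem:semiconcavity-psi}), not that of $\psi$ itself.

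\textbf{Minor point.} Your concerns about the normalization and the $c$-transform are moot. One has $\int e^{\varphi^\nu\oplus\psi^\nu-c}\,d\rho\,d\mu=1$, and $(\varphi^\nu)^c=\psi^\nu$ $\mu$-a.e., because the second Schr\"odinger equation holds $\nu$-a.e.\ and $\mu\sim\nu$.

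\textbf{What the paper uses instead.} Lemma~\ref{lem:conditionlaw} gives a pointwise bound $H(\pi^{\rho,\nu}(\cdot\mid b)\mid\pi^{\rho,\nu}(\cdot\mid b'))\le B\,\omega(b,b')$. The linear term cancels inside each conditional law, and the cross-regime case uses $|c|\le M_0$ and $\operatorname{osc}\psi\le 2M_0$. Lemma~\ref{thm:stability_first} then glues: $\tilde\pi(dz_1,dz_2)=\mu(dz_2)\int\pi^{\rho,\nu}(dz_1\mid z)\,\tau(dz\mid z_2)$ for an arbitrary $\tau\in\Pi(\mu,\nu)$. Combined with the minimality of $\pi^{\rho,\mu}$ for $H(\cdot\mid\pi^{\rho,\nu})$ on $\Pi(\rho,\mu)$, the chain rule and convexity, this yields $H(\mu\mid\nu)+B\,W_\omega^2(\mu,\nu)$.

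\textbf{The splitting through $\tilde\nu$.} Your route $\mu\to\tilde\nu\to\nu$ with $\tilde\nu_j=\bar\mu(j)\hat\nu_j$ does not close either, for two reasons.
\begin{itemize}
\item There is no triangle inequality for $H(\pi^{\rho,\cdot}\mid\pi^{\rho,\cdot})$ or for $\Delta$. The three-point identity for this Bregman-type gap leaves the cross term $\int(\psi^{\tilde\nu}-\psi^\nu)\,d(\mu-\tilde\nu)$, which is a priori only of order $\sum_j\bar\mu(j)W_1(\hat\mu_j,\hat\nu_j)$.
\item The within-regime part now carries weights $\bar\mu(j)$, whereas the chain rule only controls $\sum_j\bar\nu(j)H(\hat\nu_j\mid\hat\mu_j)\le H(\nu\mid\mu)$. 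When $\bar\nu(j)\ll\bar\mu(j)$, the term $\bar\mu(j)W_2^2(\hat\mu_j,\hat\nu_j)$ is not absorbed. At best it can be patched with a diameter-dependent constant, not with $B$.
\end{itemize}
The paper never splits the measures. Because the pointwise bound covers same-regime and cross-regime pairs at once, everything reduces to the linear quantity $W_\omega^2(\mu,\nu)$. Lemma~\ref{T2_discret} then splits a coupling rather than a measure. It keeps mass $m_i=\bar\mu(i)\wedge\bar\nu(i)\le\bar\nu(i)$ inside regime $i$, so the chain rule applies, and moves the leftover mass $\|\bar\mu-\bar\nu\|_{\mathrm{TV}}$ across regimes at unit cost.
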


\begin{remark}
By \eqref{eq:mutual_finite_entropy}, one has $\mu\sim\nu$. Consequently, the first two conditions in Assumption~\ref{assum:measures} also hold for the pair $(\rho,\nu)$. In particular, $\operatorname{supp}\mu=\operatorname{supp}\nu$. Hence, Assumption~\ref{assum:map_c2} imposed for $(\rho,\mu)$ also holds for $(\rho,\nu)$, with the same constants $M_0$, $M_1$, and $M_2$. 
 The $T_2$ condition in Assumption~\ref{assum:measures} need not hold for $\nu$ and is not used in the proof.
\end{remark}

\begin{theorem}[Exponential convergence in relative entropy]
\label{thm:exp}
Let Assumptions~ \ref{assum:reference_marginals}, \ref{assum:measures} and \ref{assum:map_c2} hold for \((\rho,\mu) \in \mathcal{P}(\mathbb{R}^d)^2\) and the reference process $P \in \mathcal{P}(\Omega_T)$. Let $\pi^{\rho,\mu}$ be the unique optimal coupling, and let $\pi^{n+1,n}$ and $\pi^{n+1,n+1}$ be, respectively, the half-step and full-step couplings defined in \eqref{eq:half_step_coupling} and \eqref{eq:full_step_coupling}. Assume that $ H(\pi^{\rho,\mu} \mid \pi^{1,0}) < \infty$, $H(\pi^{\rho,\mu} \mid \pi^{1,1}) < \infty$. Then there exist constants $C > 0$ and $\theta \in (0, 1)$  depending on $ \rho, \mu, P $ and $ \pi^{1,0} $, $ \pi^{1,1} $ such that, for all $n \geq 0$,
\[
H(\pi^{\rho,\mu} \mid \pi^{n+1,n}) \leq C \theta^n,
\qquad
H(\pi^{\rho,\mu} \mid \pi^{n+1,n+1}) \leq C \theta^n .
\]
\end{theorem}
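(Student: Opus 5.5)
The plan is to combine the stability estimate of Theorem~\ref{thm:stable} with an exponential convergence of the Sinkhorn marginals, in the spirit of \cite{ChiariniConfortiGrecoTamanini2024SemiconcStabConv}. We first handle the half-step couplings. Since $\pi^{n+1,n}\in\Pi(\rho,\mu^{n+1,n})$ and $\pi^{n+1,n}$ has the product form $f^{n+1}_i r_{ij} g^n_j$, it is the optimal entropic coupling between $\rho$ and $\mu^{n+1,n}$ for the cost $c$. Thus $\pi^{n+1,n}=\pi^{\rho,\mu^{n+1,n}}$. Applying Theorem~\ref{thm:stable} with $\nu=\mu^{n+1,n}$ gives
\[
H(\pi^{\rho,\mu}\mid\pi^{n+1,n})\le H(\mu\mid\mu^{n+1,n})+BC_\mu H(\mu^{n+1,n}\mid\mu)+B\|\mu-\mu^{n+1,n}\|_{\mathrm{TV}}.
\]
So it suffices to show that these three marginal discrepancies decay geometrically. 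Mutual finiteness of the entropies will follow from uniform two-sided bounds on the density $d\mu^{n+1,n}/d\mu$, established next.

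\textbf{Step 1: uniform bounds on potentials.} Write $\varphi^n=-\log f^n$ and $\psi^n=-\log g^n$. By compactness and Assumption~\ref{assum:map_c2}, $|c|\le M_0$ on the relevant supports. After the first iteration, every potential is a soft-min of $c$ minus a previous potential. Hence the oscillations of $\varphi^{n+1}$ and $\psi^{n+1}$ over the supports are bounded by $2M_0$, uniformly in $n$. Consequently the density
\[
\frac{d\mu^{n+1,n}_j}{d\mu_j}(y)=g^n_j(y)\sum_i\int r_{ij}(0,x;T,y)f^{n+1}_i(x)\rho_i(dx)
\]
is bounded above and below by constants $e^{\pm 4M_0}$. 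The same holds for $d\rho^{n,n}/d\rho$.

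\textbf{Step 2: exponential convergence via Hilbert's metric.} Following \cite{eckstein2025hilbert}, the map $g^n\mapsto g^{n+1}$ is a composition of two positive integral operators with kernel $r$. On the supports this kernel is bounded between $e^{-M_0}$ and $e^{M_0}$ with respect to $\rho\otimes\mu$, uniformly over the finitely many regime pairs. By Birkhoff's theorem each operator is a strict contraction in the Hilbert projective metric $d_H$, with coefficient $\tanh(M_0)<1$. Hence
\[
d_H(g^n,g)\le \tanh(M_0)^{2n}d_H(g^0,g).
\]
The initial distance is finite after one step thanks to Step 1, which is where the dependence of $C$ on $\pi^{1,0}$ enters. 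Since $d\mu^{n+1,n}/d\mu=g^n/g^{n+1}$ up to normalization, a bound $d_H\le\varepsilon$ gives $\|\log(d\mu^{n+1,n}/d\mu)\|_\infty\lesssim\varepsilon$. Because the density is bounded (Step 1), both $H(\mu\mid\mu^{n+1,n})$ and $H(\mu^{n+1,n}\mid\mu)$ are bounded by a constant times $\|\log(d\mu^{n+1,n}/d\mu)\|_\infty$, and so is the total variation distance. This yields $C\theta^n$ with $\theta=\tanh(M_0)^2$.

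\textbf{Step 3: full-step couplings.} Here $\pi^{n+1,n+1}\in\Pi(\rho^{n+1,n+1},\mu)$ is the optimizer between $\rho^{n+1,n+1}$ and $\mu$. The plan is to apply Theorem~\ref{thm:stable} with the roles of the marginals exchanged, using the transposition argument (Remark~\ref{rem:transposition}). The $T_2$ inequality, however, is imposed only on $\mu$, so this direct route is unavailable. Instead, I would use the chain rule for relative entropy. Writing $\pi^{n+1,n+1}=\pi^{n+1,n}\,g^{n+1}/g^n$ gives
\[
H(\pi^{\rho,\mu}\mid\pi^{n+1,n+1})=H(\pi^{\rho,\mu}\mid\pi^{n+1,n})+\int\log\frac{g^n}{g^{n+1}}\,d\mu.
\]
The last term is bounded by $\|\log(g^n/g^{n+1})\|_{\infty,\text{osc}}$ plus a normalization correction, which again decays like $\theta^n$ by Step 2. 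This transfers the bound from the half-step to the full step.

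The main obstacle is Step 2. The rate from \cite{eckstein2025hilbert} is stated in total variation, and converting a Hilbert-metric contraction into sup-norm control of log-densities on the hybrid space requires care. In particular, the normalization constants and the regimes with $\bar\mu(j)=0$ must be handled. I would first verify the contraction regime-by-regime on $\operatorname{supp}\rho\times\operatorname{supp}\mu$, treating the sum over $i,j$ as integration against a finite reference measure on $E$.
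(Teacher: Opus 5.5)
Your proof is correct, but the key step differs from the paper's.

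\textbf{How the paper argues.} The paper also starts from $\pi^{n+1,n}=\pi^{\rho,\mu^{n+1,n}}$ and Theorem~\ref{thm:stable}, but it feeds in only total-variation decay of $\mu^{n+1,n}$, taken from \cite{eckstein2025hilbert}. The two entropy terms are absorbed through the Pythagorean identity $a_{n-1}-a_n=H(\rho\mid\rho^{n,n})+H(\mu\mid\mu^{n,n-1})$ for $a_n=H(\pi^{\rho,\mu}\mid\pi^{n+1,n})$. It combines this with the monotonicity bounds $H(\mu\mid\mu^{n+1,n})\le H(\mu\mid\mu^{n,n-1})$ and $H(\mu^{n+1,n}\mid\mu)\le H(\rho\mid\rho^{n,n})$ of \cite{nutz2022eot}. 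This gives the recursion $a_n\le\frac{C_{\mathrm{st}}}{1+C_{\mathrm{st}}}a_{n-1}+D\kappa^n$, which is closed by Lemma~\ref{lem:elementary_recursion_estimate}.

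\textbf{How your route differs.} You instead use Birkhoff's contraction in sup-norm form to make all three terms decay like $\tanh(M_0)^{2n}$ directly. Your ``main obstacle'' is not really one: the Hilbert distance between positive functions is exactly the oscillation of the log-ratio. This buys an explicit rate depending only on $M_0$.

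\textbf{Step 2 already proves the theorem.} Since $d\pi^{\rho,\mu}/d\pi^{n+1,n}=\exp\bigl(\varphi^{n+1}(z)-\varphi(z)+\psi^n(w)-\psi(w)\bigr)$ and both measures are probabilities, the log-density has essential infimum $\le 0$. Hence $H(\pi^{\rho,\mu}\mid\pi^{n+1,n})\le\operatorname{osc}(\varphi^{n+1}-\varphi)+\operatorname{osc}(\psi^n-\psi)$. So your Step 2 alone proves the theorem in this compact setting, without Theorem~\ref{thm:stable}, the $T_2$ hypothesis, or the constants $M_1,M_2$.

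\textbf{What the paper's scheme buys.} The paper follows \cite{ChiariniConfortiGrecoTamanini2024SemiconcStabConv}. There the stability estimate does genuine work and only a weak (TV) convergence input is needed. That structure is what can survive when the cost is not uniformly bounded and a sup-norm Hilbert contraction is unavailable.

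\textbf{Step 3.} Your chain-rule identity is precisely the paper's monotonicity step. The correction term $\int\log(g^n/g^{n+1})\,d\mu$ equals $-H(\mu\mid\mu^{n+1,n})\le 0$. There is no normalization issue, since $g^n/g^{n+1}=d\mu^{n+1,n}/d\mu$ exactly. So no decay estimate is needed there.

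\textbf{The case $n=0$.} Since $\psi^0$ is arbitrary, the mutual finiteness required in Theorem~\ref{thm:stable} may fail. Simply absorb this term into $C$ using the hypothesis $H(\pi^{\rho,\mu}\mid\pi^{1,0})<\infty$.
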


For a precise description of the dependence of $C$ and $\theta$ on $ \rho, \mu, P $ and the initial data see the proof of the theorem in Section \ref{sec:proof-RS}.
\begin{remark}
\label{remark:lift}
By the entropy decomposition above, each intermediate Sinkhorn coupling, in particular $\pi^{n+1,n}$ and $\pi^{n+1,n+1}$, can be canonically lifted back to a path measure by keeping the reference conditional bridge $P(\,\cdot\,|Z_0,Z_T)$ unchanged and replacing only the endpoint law. In particular, one obtains corresponding path measures $Q^{n+1,n}$ and $Q^{n+1,n+1}$ such that
\[
\mathcal H(Q^{\rho,\mu} \mid Q^{n+1,n}) = H(\pi^{\rho,\mu} \mid \pi^{n+1,n}), 
\qquad
\mathcal H(Q^{\rho,\mu} \mid Q^{n+1,n+1}) = H(\pi^{\rho,\mu} \mid \pi^{n+1,n+1}) .
\]
Moreover, since these lifted measures still have the same endpoint factorization as in Theorem~\ref{thm:classic}, the reconstruction procedure in Theorem~\ref{thm:classic}(ii)--(iii) applies verbatim. Therefore, the exponential convergence established in Theorem~\ref{thm:exp} for the marginal Sinkhorn iterates immediately yields exponential convergence of the reconstructed path measures toward the optimal Schr\"odinger bridge.
\end{remark}

\begin{remark}
\label{rem:transposition}
In the proof of Theorem \ref{thm:stable} and \ref{thm:exp}, we treat only the case where \(\mu\) satisfies the
Talagrand \(T_2\) inequality \eqref{eq:t2_regimewise}. Suppose instead that
this condition holds for the initial measure \(\rho\), and we vary \(\rho\) instead of \(\mu\). While the problem may not be symmetric in the marginals, it is still sufficient to flip the marginals and costs. At the level of the static EOT problem,
define the transposed cost by
\[
c^\top(w,z):=c(z,w),
\qquad\text{that is,}\qquad
c^\top_{ji}(y,x):=c_{ij}(x,y).
\]
Then, for every \(\pi\in\Pi(\rho,\mu)\),
\[
\begin{aligned}
&H(\pi\mid\rho\otimes\mu)
+\int_{E\times E} c(z,w)\,\pi(\mathrm{d}z,\mathrm{d}w) \\
&\qquad =
H(\pi^\top\mid\mu\otimes\rho)
+\int_{E\times E} c^\top(w,z)\,
\pi^\top(\mathrm{d}w,\mathrm{d}z).
\end{aligned}
\]
Consequently, transposition identifies the static EOT problem with marginals
\((\rho,\mu)\) and cost \(c\) with the problem with marginals \((\mu,\rho)\)
and cost \(c^\top\). Under this identification, the two Sinkhorn updates are
exchanged. Thus, the argument for the case where the second marginal satisfies
the \(T_2\) condition applies to the transposed problem when the condition is
imposed on \(\rho\).

This transposition concerns only the static EOT formulation but
does not yield a symmetry of the dynamic Schrödinger bridge. Nevertheless, the lifted Sinkhorn iterates satisfy
\[
H\!\left(Q^{\rho,\mu}\mid Q^{n+1,n}\right)
=
H\!\left(\pi^{\rho,\mu}\mid\pi^{n+1,n}\right),
\]
as detailed in Remark~\ref{remark:lift}.
Hence, their exponential convergence rate on
path space is exactly the rate obtained for the static EOT iterates, so it is
enough to treat the case where \(\mu\) satisfies the \(T_2\) condition.
\end{remark}

\subsection{Main Results on the Partially Observed Problem}
\label{subsec:partial}

We now consider a partially observed variant of the regime-switching Schr\"odinger bridge problem, in which the terminal regime is not observed. More precisely, the initial marginal is prescribed on the full hybrid space $\mathbb{R}^d \times \mathcal{I}$, while at time $T$ only the marginal distribution of the continuous component $X_T$ is prescribed.

To the best of our knowledge, this partially observed problem has not been considered in the literature.

Let
\[
\rho \in \mathcal P(\mathbb R^d \times \mathcal{I}),
\qquad
\mu_{\mathrm p} \in \mathcal P(\mathbb R^d),
\]
denote the observed marginals
The partially observed regime-switching Schr\"odinger bridge problem is defined by
\begin{equation*}
\inf_{Q \ll P}
\left\{
\mathcal{H}(Q \mid P)
:
Q \circ Z_0^{-1} = \rho,
\quad
Q \circ X_T^{-1} = \mu_{\mathrm p}
\right\}.
\end{equation*}

By the entropy decomposition formula, for every admissible path measure \(Q \ll P\), if
\[
Q_{0,T}:=Q\circ (Z_0,Z_T)^{-1},
\qquad
P_{0,T}:=P\circ (Z_0,Z_T)^{-1},
\]
then
\[
\mathcal H(Q\mid P)
=
H(Q_{0,T}\mid P_{0,T})
+
\int
\mathcal H\bigl(Q(\cdot\mid z_0,z_T)\mid P(\cdot\mid z_0,z_T)\bigr)
\,Q_{0,T}(\mathrm dz_0,\mathrm dz_T).
\]
Therefore, as before
\begin{align}
&\inf_{Q\ll P}
\Bigl\{
H(Q\mid P):
Q\circ Z_0^{-1}=\rho,\;
Q\circ X_T^{-1}=\mu_{\mathrm p}
\Bigr\}
\\
\label{eq:eot-partial}
&\qquad
=
\inf_{\pi\in\mathcal P((\mathbb R^d\times\mathcal I)^2)}
\Bigl\{
H(\pi\mid P_{0,T}):
\pi\circ Z_0^{-1}=\rho,\;
\pi\circ X_T^{-1}=\mu_{\mathrm p}
\Bigr\}.
\end{align}

Thus the partially observed Schrödinger bridge problem is reduced to a static problem with partial observation.

\begin{assumption}
\label{assum:reference_marginals_partial}
Let $\bar P_{0,T}:=P\circ (Z_0,X_T)^{-1} $
be the projected endpoint law of the reference process. Assume that
\[
H(\rho\otimes\mu_{\mathrm p}\mid \bar P_{0,T})<\infty \quad \text{ and } \quad
\bar P_{0,T}\big|_{\operatorname{supp}(\rho\otimes\mu_{\mathrm p})}
\ll \rho\otimes\mu_{\mathrm p}.
\]
\end{assumption}

Define \(\bar p(z_0,y):=\frac{d\left(\left.\bar P_{0,T}\right|_{\operatorname{supp}(\rho\otimes\mu_{\mathrm p})}\right)}{d(\rho\otimes\mu_{\mathrm p})}(z_0,y)\). In componentwise notation, for \(z_0=(x,i)\) with \(i\in\mathcal I\), write \(\bar p_i(x,y):=\bar p((x,i),y)\), so that
\begin{equation}
\bar p_i(x,y)
:=
\frac{
d\!\left(
\bar P_{0,T}\big|_{\operatorname{supp}(\rho\otimes\mu_{\mathrm p})}
\right)
}{
d(\rho\otimes\mu_{\mathrm p})
}\bigl((x,i),y\bigr).
\label{eq:bar_p}
\end{equation}

As in the fully observed case \eqref{eq:change_kernel}, the EOT problem with kernel
$\bar p$ is equivalent to the EOT problem
with the transition kernel $\bar r$:
\[
\bar r_i(0,x;T,y)dy
:=
P(X_T\in dy\mid X_0=x,I_0=i),
\qquad
i\in\mathcal I.
\]
Then the projected transition density satisfies
\begin{equation}
\label{eq:projected_transition}
\bar r_i(0,x;T,y)
=
\sum_{j\in\mathcal I}
r_{ij}(0,x;T,y).
\end{equation}

For each $i\in I$, define
\[
\bar p_i(x,y)
:=
\frac{
d\!\left(
\bar P_{0,T}\big|_{\operatorname{supp}(\rho\otimes\mu_{\mathrm p})}
\right)
}{
d(\rho\otimes\mu_{\mathrm p})
}\big((x,i),y\big).
\]

\begin{theorem}[Characterization of the optimal control under partial terminal observation]
\label{thm:partial_observation}
Under Assumption \ref{assum:reference_marginals_partial} the partially observed static regime-switching Schr\"odinger bridge problem \eqref{eq:eot-partial} admits
a unique optimal solution $\pi^{\rho,\mu_{\mathrm p}}\ll P_{0, T}$, characterized as follows.
There exist nonnegative measurable functions $(\tilde f_i)_{i\in\mathcal I}$ and $\tilde g$, unique up to the transformation
\[
(\tilde f,\tilde g)
\sim
(\lambda\tilde f,\lambda^{-1}\tilde g),
\qquad
\lambda>0,
\]
such that
\begin{equation}
\label{eq:partial_path_factorization}
\frac{\mathrm d\pi^{\rho,\mu_{\mathrm p}}}{\mathrm dP_{0,T}}(z_0,z_T)
=
\tilde f_{i_0}(x_0)\tilde g(x_T) \quad \text{for }\pi^{\rho,\mu}\text{-a.e. } (z_0,z_T)=((x_0,i_0),(x_T,i_T)).
\end{equation}
Moreover, $(\tilde f_i)_{i\in\mathcal I}$ and $\tilde g$ solve the projected Schr\"odinger system
\begin{equation}
\label{eq:partial_schrodinger_system_reference}
\left\{
\begin{aligned}
1
&=
\tilde f_i(x)
\int_{\mathbb R^d}
\bar p_i(x,y)\tilde g(y)\,\mu_{\mathrm p}(\mathrm dy),
&&
\rho_i\text{-a.e. }x,
\quad i\in\mathcal I,
\\
1
&=
\tilde g(y)
\sum_{i\in\mathcal I}
\int_{\mathbb R^d}
\bar p_i(x,y)\tilde f_i(x)\,\rho_i(\mathrm dx),
&&
\mu_{\mathrm p}\text{-a.e. }y.
\end{aligned}
\right.
\end{equation}

Equivalently, the optimal projected endpoint coupling $\gamma^{\rho,\mu_{\mathrm p}}
:=
(\mathrm{id}_E \times \mathrm{pr}_{\mathbb{R}^d})_{\#} \pi^{\rho,\mu_{\mathrm p}}$, where $ \mathrm{pr}_{\mathbb{R}^d}(y, i) = y $, is given componentwise by
\[
\gamma_i^{\rho,\mu_{\mathrm p}}(\mathrm dx,\mathrm dy)
=
\tilde f_i(x)\bar p_i(x,y)\tilde g(y)\,
\rho_i(\mathrm dx)\mu_{\mathrm p}(\mathrm dy),
\qquad
i\in\mathcal I.
\]
\end{theorem}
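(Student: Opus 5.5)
The proof reduces the partially observed problem to a standard static entropic problem on $E\times\mathbb R^d$, solves that problem with Theorem~\ref{thm:classic}, and then lifts the solution back to $(\mathbb R^d\times\mathcal I)^2$.

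\textbf{Step 1: reduction to the projected coupling.} Let $\gamma:=(\mathrm{id}_E\times \mathrm{pr}_{\mathbb R^d})_\#\pi$ and $\bar P_{0,T}=(\mathrm{id}_E\times\mathrm{pr}_{\mathbb R^d})_\# P_{0,T}$. By the chain rule for relative entropy, conditioning on $(Z_0,X_T)$ gives
\[
H(\pi\mid P_{0,T})=H(\gamma\mid \bar P_{0,T})+\int H\bigl(\pi(\mathrm d i_T\mid z_0,y)\mid P_{0,T}(\mathrm d i_T\mid z_0,y)\bigr)\,\gamma(\mathrm dz_0,\mathrm dy).
\]
The constraints in \eqref{eq:eot-partial} involve only $\gamma$, namely $\gamma\in\Pi(\rho,\mu_{\mathrm p})$. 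Given any admissible $\gamma$, we can set
\[
\pi(\mathrm dz_0,\mathrm dz_T):=\gamma(\mathrm dz_0,\mathrm dx_T)\,P_{0,T}(\mathrm di_T\mid z_0,x_T).
\]
This $\pi$ is admissible, has projection $\gamma$, and makes the second term vanish. Hence the infimum in \eqref{eq:eot-partial} equals $\inf_{\gamma\in\Pi(\rho,\mu_{\mathrm p})}H(\gamma\mid\bar P_{0,T})$. Moreover, every minimizer $\pi$ has a minimizing projection $\gamma$ and satisfies $\pi(\cdot\mid z_0,y)=P_{0,T}(\cdot\mid z_0,y)$ for $\gamma$-a.e.\ $(z_0,y)$. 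Consequently uniqueness of the optimal $\gamma$ gives uniqueness of the optimal $\pi$.

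\textbf{Step 2: the projected static problem.} This is a standard static Schrödinger problem between the Polish spaces $E$ and $\mathbb R^d$, with reference measure $\bar P_{0,T}$ and marginals $\rho$ and $\mu_{\mathrm p}$. Assumption~\ref{assum:reference_marginals_partial} is exactly the hypothesis of \cite[Theorem 2.1]{nutz2022eot}, which is the result behind Theorem~\ref{thm:classic}. Applying it gives the following.
\begin{itemize}
\item There is a unique optimizer $\gamma^{\rho,\mu_{\mathrm p}}$.
\item Its density is $\tilde f(z_0)\tilde g(y)$ with respect to $\bar P_{0,T}$, and equivalently $\tilde f_i(x)\bar p_i(x,y)\tilde g(y)$ with respect to $\rho\otimes\mu_{\mathrm p}$ on the support.
\item The potentials are unique up to $(\lambda\tilde f,\lambda^{-1}\tilde g)$.
\item The marginal constraints give the Schrödinger equations.
\end{itemize}
Writing these equations componentwise in $i$ (Notation 1) yields exactly \eqref{eq:partial_schrodinger_system_reference}: the first equation comes from the $\rho_i$-marginal and the second from the $\mu_{\mathrm p}$-marginal, the latter after summing over $i$. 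This also establishes the stated componentwise form of $\gamma_i^{\rho,\mu_{\mathrm p}}$.

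\textbf{Step 3: lifting back to $(\mathbb R^d\times\mathcal I)^2$.} Define $\pi^{\rho,\mu_{\mathrm p}}:=\gamma^{\rho,\mu_{\mathrm p}}\otimes P_{0,T}(\mathrm di_T\mid z_0,x_T)$. For measurable $A$,
\[
\pi^{\rho,\mu_{\mathrm p}}(A)=\int \tilde f(z_0)\tilde g(x_T)\,P_{0,T}(\mathrm di_T\mid z_0,x_T)\,\bar P_{0,T}(\mathrm dz_0,\mathrm dx_T)\ \text{restricted to }A=\int_A \tilde f_{i_0}(x_0)\tilde g(x_T)\,\mathrm dP_{0,T}.
\]
This gives \eqref{eq:partial_path_factorization} and shows $\pi^{\rho,\mu_{\mathrm p}}\ll P_{0,T}$. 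Uniqueness of the potentials is inherited from Step 2, because the factorization of $\pi$ determines the factorization of $\gamma$ after projecting.

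\textbf{Main obstacle.} The delicate point is the conditional-entropy decomposition in Step 1. It requires regular conditional distributions, which exist here since all spaces involved are Polish, together with care on null sets. A secondary technical point is checking that the support restriction and the Radon--Nikodym density $\bar p$ match the setting of \cite{nutz2022eot}, so that the a.e.\ statements hold $\rho_i$- and $\mu_{\mathrm p}$-almost everywhere, as claimed in \eqref{eq:partial_schrodinger_system_reference}.
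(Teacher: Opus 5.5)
Your proposal is correct and follows essentially the same route as the paper: you solve the projected problem $\inf_{\gamma\in\Pi(\rho,\mu_{\mathrm p})}H(\gamma\mid\bar P_{0,T})$ with \cite[Theorem 2.1]{nutz2022eot}, then lift $\gamma^{\rho,\mu_{\mathrm p}}$ by keeping the reference conditional law of $i_T$ given $(Z_0,X_T)$. That lift is exactly the measure built in Lemma~\ref{lem:partial_obeserve}; the only difference is that the paper invokes the data-processing inequality where you write out the chain rule, and using its equality case also gives you uniqueness of the lifted optimizer, which the paper's lemma (stated only as existence) leaves implicit.
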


\begin{remark}
As for the fully observed case, we can give an expression for optimal $(b^*,\sigma^*,\lambda^*)$ inducing the path law $Q^{\rho,\mu_{\mathrm p}}$.
Let the function $h=(h_i)_{i\in\mathcal I}$ be defined by \[
h_i(t,x)
:=
\mathbb E_P\!\left[
\tilde g(X_T)
\mid
X_t=x,\ I_t=i
\right],
\qquad
(t,x,i)\in[0,T]\times\mathbb R^d\times\mathcal I.
\]
Let Assumptions (C), (G) and Assumption (H) of \cite{zlotchevski2025schrodingerbridgeproblemjump} for $h$ be satisfied.
Then the optimal control triplet $(b^*,\sigma^*,\lambda^*)$ inducing the path law $Q^{\rho,\mu_{\mathrm p}}$ is given by
\[
\begin{aligned}
\sigma_i^*(t,x)
&=
\sigma_i(t,x),
\\
b_i^*(t,x)
&=
b_i(t,x)+a_i(t,x)\nabla_x\log h_i(t,x),
\\
\lambda_{ij}^*(t,x)
&=
\lambda_{ij}(t,x)\frac{h_j(t,x)}{h_i(t,x)},
\qquad
j\neq i,
\end{aligned}
\]
where $a_i(t,x)=\sigma_i(t,x)\sigma_i(t,x)^\top$, and
the function $h$ satisfies
\[
\left\{
\begin{aligned}
&\partial_t h_i(t,x)
+b_i(t,x)\cdot\nabla_x h_i(t,x)
+\frac12\operatorname{Tr}\!\bigl(a_i(t,x)\nabla_x^2h_i(t,x)\bigr)
\\
&\qquad
+\sum_{j\neq i}
\lambda_{ij}(t,x)\bigl(h_j(t,x)-h_i(t,x)\bigr)
=0,
&&
(t,x,i)\in[0,T)\times\mathbb R^d\times\mathcal I,
\\
&h_i(T,x)=\tilde g(x),
&&
x\in\mathbb R^d,
\quad i\in\mathcal I.
\end{aligned}
\right.
\]

\end{remark}

\begin{assumption}
\label{assum:po_measures}
The endpoint measures $\rho\in\mathcal P(\mathbb R^d\times\mathcal I)$ and $\mu_{\mathrm p}\in\mathcal P(\mathbb R^d)$ satisfy the following conditions.
\begin{enumerate}
\item \textbf{Absolute continuity.}
For every $i\in\mathcal I$ with $\bar\rho(i)>0$, the conditional law $\hat\rho_i$ and the measure $\mu_{\mathrm p}$ are absolutely continuous with respect to Lebesgue measure on $\mathbb R^d$.

\item \textbf{Compact support.}
The measure $\rho$ is compactly supported on $\mathbb R^d\times\mathcal I$, and $\mu_{\mathrm p}$ is compactly supported on $\mathbb R^d$.

\item \textbf{Talagrand $T_2$ inequality at the observed terminal endpoint.}
The measure $\mu_{\mathrm p}$ satisfies the Talagrand $T_2$ inequality \eqref{eq:t2_euclidean} with constant $C_{\mu_{\mathrm p}}>0$.
\end{enumerate}
\end{assumption}

\begin{assumption}
\label{assum:po_map_c2}
Let the transition kernel of the reference process admit a density $r$ defined by \eqref{eq:trans_density}.
For every $i\in\mathcal I$ with $\bar\rho(i)>0$, let the map
\[
(x,y)
\longmapsto
\bar r_i(0,x;T,y),
\]
defined by \eqref{eq:projected_transition}, be of class $C^2$ and strictly positive on
\[
\operatorname{supp}(\hat\rho_i)
\times
\operatorname{supp}(\mu_{\mathrm p}).
\]
\end{assumption}

As above, define the projected cost by
\[
\bar c_i(x,y)
:=
-\log\bar r_i(0,x;T,y).
\]
For later use, set
\begin{equation}
\label{eq:partial_M_constants}
\begin{aligned}
\bar M_0
&:=
\max_{i:\,\bar\rho(i)>0}
\ \sup_{(x,y)\in\operatorname{supp}(\hat\rho_i)\times\operatorname{supp}(\mu_{\mathrm p})}
|\bar c_i(x,y)|,
\\
\bar M_1
&:=
\max_{i:\,\bar\rho(i)>0}
\ \sup_{(x,y)\in\operatorname{supp}(\hat\rho_i)\times\operatorname{supp}(\mu_{\mathrm p})}
\|\nabla\bar c_i(x,y)\|,
\\
\bar M_2
&:=
\max_{i:\,\bar\rho(i)>0}
\ \sup_{(x,y)\in\operatorname{supp}(\hat\rho_i)\times\operatorname{supp}(\mu_{\mathrm p})}
\|\nabla^2\bar c_i(x,y)\|.
\end{aligned}
\end{equation}
By Assumption~\ref{assum:po_map_c2}, these constants are finite. Here $\nabla\bar c_i$ and $\nabla^2\bar c_i$ denote the full gradient and full Hessian with respect to $(x,y)$.

In the static EOT formulation, it is convenient to work with the projected kernel $\bar r$. Under Assumption~\ref{assum:po_measures}, writing $\rho_i(x)$ and $\mu_{\mathrm p}(y)$ for the corresponding Lebesgue densities, \eqref{eq:bar_p} becomes
\begin{equation}
\label{eq:partial_change_kernel}
\bar p_i(x,y)
=
\frac{
\rho_{\mathrm{ref},i}(x)\bar r_i(0,x;T,y)
}{
\rho_i(x)\mu_{\mathrm p}(y)
}.
\end{equation}
Thus, with
\[
f_i(x)
:=
\tilde f_i(x)\frac{\rho_{\mathrm{ref},i}(x)}{\rho_i(x)},
\qquad
g(y)
:=
\frac{\tilde g(y)}{\mu_{\mathrm p}(y)},
\]
the optimal projected endpoint coupling becomes
\[
\gamma_i^{\rho,\mu_{\mathrm p}}(\mathrm dx,\mathrm dy)
=
f_i(x)\bar r_i(0,x;T,y)g(y)\,
\rho_i(\mathrm dx)\mu_{\mathrm p}(\mathrm dy),
\qquad
i\in\mathcal I.
\]
Equivalently, $(f_i)_{i\in\mathcal I}$ and $g$ solve
\begin{equation}
\label{eq:partial_schrodinger_system}
\left\{
\begin{aligned}
1
&=
f_i(x)
\int_{\mathbb R^d}
\bar r_i(0,x;T,y)g(y)\,\mu_{\mathrm p}(\mathrm dy),
&&
i\in\mathcal I,
\\
1
&=
g(y)
\sum_{i\in\mathcal I}
\int_{\mathbb R^d}
\bar r_i(0,x;T,y)f_i(x)\,\rho_i(\mathrm dx).
\end{aligned}
\right.
\end{equation}

Given a strictly positive initial function $g^0$, the projected Sinkhorn iteration is
\begin{equation}
\label{eq:partial_sinkhorn_update}
\left\{
\begin{aligned}
f_i^{n+1}(x)
&=
\frac{1}{
\displaystyle
\int_{\mathbb R^d}
\bar r_i(0,x;T,y)g^n(y)\,\mu_{\mathrm p}(\mathrm dy)
},
&&
i\in\mathcal I,
\\
g^{n+1}(y)
&=
\frac{1}{
\displaystyle
\sum_{i\in\mathcal I}
\int_{\mathbb R^d}
\bar r_i(0,x;T,y)f_i^{n+1}(x)\,\rho_i(\mathrm dx)
}.
\end{aligned}
\right.
\end{equation}
The associated half-step and full-step couplings are
\[
\begin{aligned}
\gamma_i^{n+1,n}(\mathrm dx,\mathrm dy)
&=
f_i^{n+1}(x)\bar r_i(0,x;T,y)g^n(y)\,
\rho_i(\mathrm dx)\mu_{\mathrm p}(\mathrm dy),
\\
\gamma_i^{n+1,n+1}(\mathrm dx,\mathrm dy)
&=
f_i^{n+1}(x)\bar r_i(0,x;T,y)g^{n+1}(y)\,
\rho_i(\mathrm dx)\mu_{\mathrm p}(\mathrm dy).
\end{aligned}
\]

\paragraph{Notation for the partially observed stability estimate.}
Fix $\rho\in\mathcal P(\mathbb R^d\times\mathcal I)$. For any $\eta_{\mathrm p}\in\mathcal P(\mathbb R^d)$, let $\gamma^{\rho,\eta_{\mathrm p}}$ denote the unique optimizer of the projected static EOT problem
\begin{equation}
\label{eq:partial_static_eot}
\gamma^{\rho,\eta_{\mathrm p}}
\in
\operatorname*{argmin}_{\gamma\in\Pi(\rho,\eta_{\mathrm p})}
\left\{
H(\gamma\mid\rho\otimes\eta_{\mathrm p})
+
\int_{(\mathbb R^d\times\mathcal I)\times\mathbb R^d}
\bar c((x,i),y)\,\gamma(\mathrm d(x,i),\mathrm dy)
\right\},
\end{equation}
where $\bar c((x,i),y):=\bar c_i(x,y)$. The corresponding path-space measure is obtained by keeping the conditional law of the reference process given $(Z_0,X_T)$ unchanged:
\[
Q^{\rho,\eta_{\mathrm p}}(\mathrm d\omega)
=
P(\mathrm d\omega\mid Z_0,X_T)
\,\gamma^{\rho,\eta_{\mathrm p}}(\mathrm dZ_0,\mathrm dX_T).
\]
Equivalently,
\[
Q^{\rho,\eta_{\mathrm p}}\circ(Z_0,X_T)^{-1}
=
\gamma^{\rho,\eta_{\mathrm p}},
\qquad
Q^{\rho,\eta_{\mathrm p}}(\,\cdot\mid Z_0,X_T)
=
P(\,\cdot\mid Z_0,X_T).
\]
Hence, for any $\mu_{\mathrm p},\nu_{\mathrm p}\in\mathcal P(\mathbb R^d)$,
\begin{equation}
\label{eq:partial_path_endpoint_entropy}
\mathcal H\left(Q^{\rho,\mu_{\mathrm p}}\mid Q^{\rho,\nu_{\mathrm p}}\right)
=
H\left(\gamma^{\rho,\mu_{\mathrm p}}\mid\gamma^{\rho,\nu_{\mathrm p}}\right).
\end{equation}

\begin{theorem}[Stability in relative entropy under partial terminal observation]
\label{thm:stability_partial}
Let Assumptions~ \ref{assum:reference_marginals_partial}, \ref{assum:po_measures} and~\ref{assum:po_map_c2} hold for the pair $(\rho,\mu_{\mathrm p})$. Let $\bar M_0$, $\bar M_1$, and $\bar M_2$ be defined in \eqref{eq:partial_M_constants}, and set
\[
\bar B
:=
\max\left\{
\bar M_2+\frac{\bar M_1^2}{2},
\,4\bar M_0
\right\}.
\]
Then, for every $\nu_{\mathrm p}\in\mathcal P(\mathbb R^d)$ such that
\begin{equation}
\label{eq:partial_mutual_finite_entropy}
H(\mu_{\mathrm p}\mid\nu_{\mathrm p})
+
H(\nu_{\mathrm p}\mid\mu_{\mathrm p})
<+\infty,
\end{equation}
one has
\[
\begin{aligned}
\mathcal H\left(Q^{\rho,\mu_{\mathrm p}}\mid Q^{\rho,\nu_{\mathrm p}}\right)
&=
H\left(\gamma^{\rho,\mu_{\mathrm p}}\mid\gamma^{\rho,\nu_{\mathrm p}}\right)
\\
&\le
H(\mu_{\mathrm p}\mid\nu_{\mathrm p})
+
\bar B C_{\mu_{\mathrm p}}H(\nu_{\mathrm p}\mid\mu_{\mathrm p}).
\end{aligned}
\]
\end{theorem}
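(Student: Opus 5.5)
The plan is to follow the route of \cite{ChiariniConfortiGrecoTamanini2024SemiconcStabConv} for the projected static problem \eqref{eq:partial_static_eot}. The equality $\mathcal H(Q^{\rho,\mu_{\mathrm p}}\mid Q^{\rho,\nu_{\mathrm p}})=H(\gamma^{\rho,\mu_{\mathrm p}}\mid\gamma^{\rho,\nu_{\mathrm p}})$ is already \eqref{eq:partial_path_endpoint_entropy}, so only the inequality needs proof. Write the optimizers in potential form
\[
\gamma^{\rho,\eta_{\mathrm p}}_i(\mathrm dx,\mathrm dy)=\exp\bigl(-\varphi^\eta_i(x)-\psi^\eta(y)-\bar c_i(x,y)\bigr)\rho_i(\mathrm dx)\eta_{\mathrm p}(\mathrm dy),\qquad \eta\in\{\mu,\nu\},
\]
using Theorem~\ref{thm:partial_observation}. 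Since $\nu_{\mathrm p}\sim\mu_{\mathrm p}$, this gives
\[
\log\frac{\mathrm d\gamma^{\rho,\mu_{\mathrm p}}}{\mathrm d\gamma^{\rho,\nu_{\mathrm p}}}
=(\varphi^\nu_i-\varphi^\mu_i)(x)+(\psi^\nu-\psi^\mu)(y)+\log\frac{\mathrm d\mu_{\mathrm p}}{\mathrm d\nu_{\mathrm p}}(y).
\]
Integrating against $\gamma^{\rho,\mu_{\mathrm p}}$, whose marginals are $\rho$ and $\mu_{\mathrm p}$, yields
\[
H(\gamma^{\rho,\mu_{\mathrm p}}\mid\gamma^{\rho,\nu_{\mathrm p}})=H(\mu_{\mathrm p}\mid\nu_{\mathrm p})+\sum_i\int(\varphi^\nu_i-\varphi^\mu_i)\,\mathrm d\rho_i+\int(\psi^\nu-\psi^\mu)\,\mathrm d\mu_{\mathrm p}.
\]
The remaining task is to bound the potential terms by $\bar B C_{\mu_{\mathrm p}}H(\nu_{\mathrm p}\mid\mu_{\mathrm p})$.

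Integrating the same log-ratio against $\gamma^{\rho,\nu_{\mathrm p}}$ gives
\[
-H(\gamma^{\rho,\nu_{\mathrm p}}\mid\gamma^{\rho,\mu_{\mathrm p}})\le 0 .
\]
Adding this to the identity above, the $\varphi$-terms cancel because both couplings share the first marginal $\rho$. The key simplification relative to the fully observed case is that no discrete terminal marginal appears: there is no $\|\bar\mu-\bar\nu\|_{\mathrm{TV}}$ term, since the terminal space is just $\mathbb R^d$. One is left with
\[
H(\gamma^{\rho,\mu_{\mathrm p}}\mid\gamma^{\rho,\nu_{\mathrm p}})\le H(\mu_{\mathrm p}\mid\nu_{\mathrm p})-H(\nu_{\mathrm p}\mid\mu_{\mathrm p})+\int(\psi^\nu-\psi^\mu)\,\mathrm d(\mu_{\mathrm p}-\nu_{\mathrm p}).
\]
Equivalently, one can run the symmetric-entropy sum argument directly.

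The main step is to control $\int(\psi^\nu-\psi^\mu)\,\mathrm d(\mu_{\mathrm p}-\nu_{\mathrm p})$. I would use the Schrödinger equation
\[
\psi^\eta(y)=\log\sum_i\int e^{-\varphi^\eta_i(x)-\bar c_i(x,y)}\rho_i(\mathrm dx),
\]
so that $\psi^\eta$ is defined on all of $\operatorname{supp}\mu_{\mathrm p}$ with the same support for both measures. Differentiating twice gives
\[
\nabla^2\psi^\eta=-\mathbb E[\nabla^2_{yy}\bar c]+\mathrm{Cov}(\nabla_y\bar c)
\]
under the conditional law of $(i,x)$ given $y$. Hence $\psi^\eta$ is $\bar M_1$-Lipschitz and $(\bar M_2+\bar M_1^2)$-semiconvex and semiconcave, with $\psi^\nu-\psi^\mu$ having Hessian bounded by roughly $2\bar M_2+\bar M_1^2$, which is the source of the constant $\bar M_2+\bar M_1^2/2$ after halving in a Taylor expansion. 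Taking an optimal $W_2$ coupling $(Y,Y')$ of $\nu_{\mathrm p}$ and $\mu_{\mathrm p}$ and expanding $u=\psi^\nu-\psi^\mu$ to second order, the first-order term is to be absorbed. The cleanest approach is to combine the expansion with the entropy terms, as in \cite[proof of Theorem 1.1]{ChiariniConfortiGrecoTamanini2024SemiconcStabConv}, and obtain
\[
\int u\,\mathrm d(\mu_{\mathrm p}-\nu_{\mathrm p})-H(\nu_{\mathrm p}\mid\mu_{\mathrm p})\le \bigl(\bar M_2+\tfrac{\bar M_1^2}{2}\bigr)W_2^2(\nu_{\mathrm p},\mu_{\mathrm p}).
\]
Then apply $T_2$ for $\mu_{\mathrm p}$ (Assumption~\ref{assum:po_measures}) to get $\le \bar B C_{\mu_{\mathrm p}}H(\nu_{\mathrm p}\mid\mu_{\mathrm p})$.

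If the gradient matching fails, the fallback is the crude $L^\infty$ bound $\mathrm{osc}(\psi)\le 2\bar M_0$, giving the $4\bar M_0$ in $\bar B$ via a total-variation control. This explains the maximum in $\bar B$.

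I expect this semiconcavity/first-order step to be the obstacle. Convexity of the supports is not assumed, so the Taylor expansion along segments must be justified, for instance by extending $\bar c_i$ in $C^2$ or by using the $W_2$ displacement only on $\operatorname{supp}\mu_{\mathrm p}$. Compactness (Assumption~\ref{assum:po_measures}) and Assumption~\ref{assum:po_map_c2} ensure all derivatives are uniformly bounded and that differentiation under the integral is legitimate.
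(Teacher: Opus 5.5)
\textbf{There is a genuine gap.} It starts with a sign error in your symmetrization step, and once the sign is corrected the approach cannot work as planned. Integrating the log-ratio against $\gamma^{\rho,\nu_{\mathrm p}}$ produces $\int\log\frac{\mathrm d\mu_{\mathrm p}}{\mathrm d\nu_{\mathrm p}}\,\mathrm d\nu_{\mathrm p}=-H(\nu_{\mathrm p}\mid\mu_{\mathrm p})$. Eliminating the $\varphi$-terms therefore gives
\[
H(\gamma^{\rho,\mu_{\mathrm p}}\mid\gamma^{\rho,\nu_{\mathrm p}})+H(\gamma^{\rho,\nu_{\mathrm p}}\mid\gamma^{\rho,\mu_{\mathrm p}})
=H(\mu_{\mathrm p}\mid\nu_{\mathrm p})+H(\nu_{\mathrm p}\mid\mu_{\mathrm p})+\int u\,\mathrm d(\mu_{\mathrm p}-\nu_{\mathrm p}),
\qquad u=\psi^\nu-\psi^\mu ,
\]
with a plus sign in front of $H(\nu_{\mathrm p}\mid\mu_{\mathrm p})$.

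\begin{itemize}
\item Your displayed inequality already fails for $\bar c\equiv 0$. Then $\gamma^{\rho,\eta_{\mathrm p}}=\rho\otimes\eta_{\mathrm p}$ and $u$ is constant, so your inequality would read $H(\mu_{\mathrm p}\mid\nu_{\mathrm p})\le H(\mu_{\mathrm p}\mid\nu_{\mathrm p})-H(\nu_{\mathrm p}\mid\mu_{\mathrm p})$.
\item Dropping $H(\gamma^{\rho,\nu_{\mathrm p}}\mid\gamma^{\rho,\mu_{\mathrm p}})\ge 0$ loses a full $H(\nu_{\mathrm p}\mid\mu_{\mathrm p})$. This cannot be absorbed into the claimed bound when $\bar B C_{\mu_{\mathrm p}}<1$; in the example above, $\bar B=0$.
\item If you use data processing, $H(\gamma^{\rho,\nu_{\mathrm p}}\mid\gamma^{\rho,\mu_{\mathrm p}})\ge H(\nu_{\mathrm p}\mid\mu_{\mathrm p})$, you are left needing $\int u\,\mathrm d(\mu_{\mathrm p}-\nu_{\mathrm p})\le \bar B\,W_2^2(\mu_{\mathrm p},\nu_{\mathrm p})$, with no entropy to spare.
\end{itemize}

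That remaining step is strictly stronger than the theorem. Write $\gamma^\eta_y:=\gamma^{\rho,\eta_{\mathrm p}}(\cdot\mid y)$. The chain rule turns the identity above into
\[
\int u\,\mathrm d(\mu_{\mathrm p}-\nu_{\mathrm p})=\int H(\gamma^{\mu}_y\mid\gamma^{\nu}_y)\,\mu_{\mathrm p}(\mathrm dy)+\int H(\gamma^{\nu}_y\mid\gamma^{\mu}_y)\,\nu_{\mathrm p}(\mathrm dy).
\]
The first summand is exactly $H(\gamma^{\rho,\mu_{\mathrm p}}\mid\gamma^{\rho,\nu_{\mathrm p}})-H(\mu_{\mathrm p}\mid\nu_{\mathrm p})$, which is what you set out to bound, and the second summand is extra.

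A Taylor expansion of $u$ cannot deliver this bound. The Hessian bounds on $\psi^\eta$ only control the remainder. The first-order term $\mathbb E[\nabla u(Y)\cdot(Y'-Y)]$ is merely $O(W_2)$, that is $O(\sqrt{H})$, unless you already know $\|\nabla\psi^\nu-\nabla\psi^\mu\|_{L^2}\lesssim W_2$, and nothing in the proposal supplies that.

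Your fallback fails as well. An oscillation/TV bound gives at best $\sqrt{H(\nu_{\mathrm p}\mid\mu_{\mathrm p})}$ via Pinsker, not a term linear in $H$. Moreover, the $4\bar M_0$ in $\bar B$ does not come from a TV control. It comes from conditional laws at terminal points lying in different regimes (Lemma~\ref{lem:conditionlaw}), a case that never arises here.

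The missing idea is to stop comparing potentials and instead compare conditional laws of a single plan, where the first-order term cancels exactly.
\begin{itemize}
\item Since $\mathrm d\gamma^{\rho,\mu_{\mathrm p}}/\mathrm d\gamma^{\rho,\nu_{\mathrm p}}$ factorizes, $\gamma^{\rho,\mu_{\mathrm p}}$ minimizes $H(\cdot\mid\gamma^{\rho,\nu_{\mathrm p}})$ over $\Pi(\rho,\mu_{\mathrm p})$.
\item Test it against the glued plan $\tilde\gamma(\mathrm dz,\mathrm dy)=\mu_{\mathrm p}(\mathrm dy)\int\gamma^\nu_{y'}(\mathrm dz)\,\tau(\mathrm dy'\mid y)$, where $\tau$ is an optimal $W_2$ coupling of $(\mu_{\mathrm p},\nu_{\mathrm p})$.
\item The chain rule and convexity of relative entropy give the bound $H(\mu_{\mathrm p}\mid\nu_{\mathrm p})+\iint H(\gamma^\nu_{y'}\mid\gamma^\nu_y)\,\tau(\mathrm dy,\mathrm dy')$.
\item For $z=(x,i)$ and $\ell(\cdot)=\bar c_i(x,\cdot)+\psi^\nu(\cdot)$, one has $H(\gamma^\nu_{y'}\mid\gamma^\nu_y)=\int[\ell(y)-\ell(y')]\,\gamma^\nu_{y'}(\mathrm dz)$.
\item Differentiating the normalization of $\gamma^\nu_{y'}$ shows $\int\nabla\ell(y')\,\mathrm d\gamma^\nu_{y'}=0$, so the first-order term vanishes.
\item Semiconcavity with $\Lambda=2\bar M_2+\bar M_1^2$ leaves $\frac{\Lambda}{2}|y-y'|^2$, and the $T_2$ inequality for $\mu_{\mathrm p}$ concludes.
\end{itemize}
This is Lemma~\ref{thm:stability_first}. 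The paper obtains the statement by viewing $\mu_{\mathrm p}$ and $\nu_{\mathrm p}$ as measures carried by regime $1$ and applying Theorem~\ref{thm:stable}, whose TV term then vanishes.
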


\begin{theorem}[Exponential convergence under partial terminal observation]
\label{thm:po_exp_conv}
Let Assumptions~  \ref{assum:reference_marginals_partial}, \ref{assum:po_measures} and~\ref{assum:po_map_c2} hold. Let $\gamma^{\rho,\mu_{\mathrm p}}$ be the optimal coupling of the projected static EOT problem, and let $\gamma^{n+1,n}$ and $\gamma^{n+1,n+1}$ be the half-step and full-step couplings generated by \eqref{eq:partial_sinkhorn_update}. Assume that $ H(\gamma^{\rho,\mu_{\mathrm p}} \mid \gamma^{1,0}) < \infty$, $H(\gamma^{\rho,\mu_{\mathrm p}} \mid \gamma^{1,1}) < \infty$. Then there exist constants $A>0$ and $\theta\in(0,1)$ such that, for all $n\ge0$,
\[
H(\gamma^{\rho,\mu_{\mathrm p}}\mid\gamma^{n+1,n})
\le A\theta^n,
\qquad
H(\gamma^{\rho,\mu_{\mathrm p}}\mid\gamma^{n+1,n+1})
\le A\theta^n.
\]

Moreover, let $Q^{n+1,n}$ and $Q^{n+1,n+1}$ be obtained by lifting $\gamma^{n+1,n}$ and $\gamma^{n+1,n+1}$ while keeping $P(\,\cdot\mid Z_0,X_T)$ unchanged. Then
\[
\begin{aligned}
\mathcal H(Q^{\rho,\mu_{\mathrm p}}\mid Q^{n+1,n})
&=
H(\gamma^{\rho,\mu_{\mathrm p}}\mid\gamma^{n+1,n}),
\\
\mathcal H(Q^{\rho,\mu_{\mathrm p}}\mid Q^{n+1,n+1})
&=
H(\gamma^{\rho,\mu_{\mathrm p}}\mid\gamma^{n+1,n+1}).
\end{aligned}
\]
Consequently,
\[
\mathcal H(Q^{\rho,\mu_{\mathrm p}}\mid Q^{n+1,n})
\le A\theta^n,
\qquad
\mathcal H(Q^{\rho,\mu_{\mathrm p}}\mid Q^{n+1,n+1})
\le A\theta^n.
\]
\end{theorem}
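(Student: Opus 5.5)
The plan is to follow the strategy of the fully observed case (Theorem~\ref{thm:exp}), applied to the projected static EOT problem \eqref{eq:partial_static_eot}. That problem is a standard EOT problem between $\rho$ on $E=\mathbb R^d\times\mathcal I$ and $\mu_{\mathrm p}$ on $\mathbb R^d$, with cost $\bar c((x,i),y)=\bar c_i(x,y)$. The regime index now lives only on the first marginal, so the terminal side is genuinely Euclidean and the term $B\|\bar\mu-\bar\nu\|_{\mathrm{TV}}$ disappears. The path-space statements follow at the end from the lifting identity \eqref{eq:partial_path_endpoint_entropy}. The reason is that each Sinkhorn iterate $\gamma^{n+1,n}$, $\gamma^{n+1,n+1}$ is lifted with the same conditional bridge $P(\cdot\mid Z_0,X_T)$, so the chain rule for relative entropy gives equality of path and endpoint entropies.

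\textbf{Step 1 (marginal representation).} The half-step coupling $\gamma^{n+1,n}$ is itself an optimal EOT plan: it has the Schrödinger form $f^{n+1}_i\bar r_i g^n$ and lies in $\Pi(\rho,\mu_{\mathrm p}^{n+1,n})$, where $\mu_{\mathrm p}^{n+1,n}$ is its second marginal. Hence $\gamma^{n+1,n}=\gamma^{\rho,\mu^{n+1,n}_{\mathrm p}}$. Similarly, $\gamma^{n+1,n+1}=\gamma^{\rho^{n+1,n+1},\mu_{\mathrm p}}$ with a perturbed first marginal. Applying Theorem~\ref{thm:stability_partial} with $\nu_{\mathrm p}=\mu_{\mathrm p}^{n+1,n}$ then gives
\[
H(\gamma^{\rho,\mu_{\mathrm p}}\mid\gamma^{n+1,n})\le H(\mu_{\mathrm p}\mid\mu^{n+1,n}_{\mathrm p})+\bar B C_{\mu_{\mathrm p}}H(\mu^{n+1,n}_{\mathrm p}\mid\mu_{\mathrm p}).
\]
The mutual finiteness condition \eqref{eq:partial_mutual_finite_entropy} holds because $d\mu_{\mathrm p}^{n+1,n}/d\mu_{\mathrm p}=g^n/g^{n+1}$. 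This ratio is bounded above and below once $\log g^n$ is bounded on the compact support. Bounds of this type follow from $|\bar c_i|\le \bar M_0$: after one iteration the potentials satisfy $\operatorname{osc}(\log f^{n}),\operatorname{osc}(\log g^n)\le 2\bar M_0$.

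\textbf{Step 2 (exponential decay of the marginal errors).} The task is now to show that $\log(g^n/g^{n+1})$ tends to zero geometrically in sup norm, since both entropies are controlled by it. Writing $h_n=\log(g^n/g^{n+1})$, we have $H(\mu_{\mathrm p}\mid\mu_{\mathrm p}^{n+1,n})+H(\mu_{\mathrm p}^{n+1,n}\mid\mu_{\mathrm p})\le \|h_n\|_\infty(e^{\|h_n\|_\infty}-1)$. I intend to use the Hilbert projective metric argument of \cite{eckstein2025hilbert}. The map $g^n\mapsto g^{n+1}$ is the composition of two positive integral operators with kernel $\bar r_i(0,x;T,y)$, followed by inversions; inversion is an isometry for the Hilbert metric. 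The kernel is bounded between $e^{-\bar M_0}$ and $e^{\bar M_0}$ on the relevant supports, so Birkhoff's theorem yields a contraction with coefficient $\theta=\tanh(\bar M_0)^2$ or similar. The sum over $i$ in the $g$-update causes no difficulty, because the first space is simply the disjoint union $E$, and the same holds for the $f$-update. This gives $d_H(g^n,g^\ast)\le \theta^n d_H(g^0,g^\ast)$, and consequently the oscillation of $h_n$ is $O(\theta^n)$. Since both $\mu_{\mathrm p}^{n+1,n}$ and $\mu_{\mathrm p}$ are probability measures, $h_n$ must vanish somewhere, so $\|h_n\|_\infty\le\operatorname{osc}(h_n)$. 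This produces the first bound with $A$ depending on $\bar M_0,\bar B,C_{\mu_{\mathrm p}}$ and the initial distance. The finite-entropy hypotheses on $\gamma^{1,0},\gamma^{1,1}$ are only needed to absorb the first step into the constant $A$.

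\textbf{Step 3 (full-step coupling), the main obstacle.} For $\gamma^{n+1,n+1}$ the perturbed marginal is the first one, $\rho^{n+1,n+1}$ on $E$, and the $T_2$ inequality is assumed only for $\mu_{\mathrm p}$, so Theorem~\ref{thm:stability_partial} does not apply directly. I would avoid a stability estimate in $\rho$ and use instead the exact identity
\[
H(\gamma^{\rho,\mu_{\mathrm p}}\mid\gamma^{n+1,n+1})=H(\gamma^{\rho,\mu_{\mathrm p}}\mid\gamma^{n+1,n})+\int \log\frac{g^n}{g^{n+1}}\,d\mu_{\mathrm p}.
\]
This identity holds because the two couplings differ only by the factor $g^{n+1}/g^n$ in $y$. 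The last term is bounded by $\|h_n\|_\infty=O(\theta^n)$ from Step 2, so the second bound follows with a possibly larger $A$. If the required sup-norm control on $h_n$ only holds on $\operatorname{supp}\mu_{\mathrm p}$, I would check that this suffices, which it should, since all integrals are against $\mu_{\mathrm p}$. Finally, the path-space claims follow from the lifting identity, as explained at the start.
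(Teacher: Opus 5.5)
Your proof is correct, but it takes a different route from the paper's. The paper never works on the projected problem directly. It embeds it into the fully observed setting by putting $\mu_{\mathrm p}$ on regime $1$ (with $c_{\mathrm{ext}}=\bar c_i$ on $\{j=1\}$) and notes that the Sinkhorn iterates coincide. It then invokes Theorem~\ref{thm:exp}, whose proof closes the recursion $a_n\le C_{\mathrm{st}}(a_{n-1}-a_n)+BA_{\mathrm{TV}}\kappa^n$. That recursion comes from the Pythagorean identities and monotonicity inequalities of \cite{nutz2022eot}, the stability estimate, and Lemma~\ref{lem:elementary_recursion_estimate}; the full step is handled by monotonicity. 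You instead stay on $(\mathbb R^d\times\mathcal I)\times\mathbb R^d$ and get sup-norm geometric decay of $h_n=\log(d\mu^{n+1,n}_{\mathrm p}/d\mu_{\mathrm p})$ from the Birkhoff contraction. This controls both marginal entropies at once, so no recursion is needed, and the constants and the rate $\tanh(\bar M_0)^2$ are explicit. Start the contraction at $g^1$, for which $d_H(g^1,g^\ast)\le 4\bar M_0$, since $d_H(g^0,g^\ast)$ may be infinite.

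Two further observations.

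First, once Step~2 is in hand, the appeal to Theorem~\ref{thm:stability_partial}, and with it the $T_2$ hypothesis, is dispensable. Write $\log g^n-\log g^\ast=m_n+\epsilon_n$ with $\|\epsilon_n\|_\infty\le\frac12 d_H(g^n,g^\ast)$. The $f$-update then gives $\log(f_i^\ast/f_i^{n+1})=m_n+\delta_n$ with $\|\delta_n\|_\infty\le\|\epsilon_n\|_\infty$. Hence $\bigl\|\log\bigl(d\gamma^{\rho,\mu_{\mathrm p}}/d\gamma^{n+1,n}\bigr)\bigr\|_\infty\le d_H(g^n,g^\ast)$, which bounds the entropy directly.

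What the paper's route buys in return is a rate that comes from semiconcavity rather than from Hilbert's metric. In the embedded problem both discrete marginals equal $(1,0,\dots,0)$, so with the sharp form of the stability estimate the TV term is absent. The recursion then gives $a_n\le\frac{C_{\mathrm{st}}}{1+C_{\mathrm{st}}}a_{n-1}$ with $C_{\mathrm{st}}=\max\{1,\bar BC_{\mu_{\mathrm p}}\}$. Its gap $1-\theta$ decays only polynomially in the cost bounds, whereas $1-\tanh(\bar M_0)^2\approx 4e^{-2\bar M_0}$.

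Second, the correction term in your Step~3 is $\int h_n\,d\mu_{\mathrm p}=-H(\mu_{\mathrm p}\mid\mu^{n+1,n}_{\mathrm p})\le 0$. Your identity is exactly the Pythagorean identity the paper uses. So $H(\gamma^{\rho,\mu_{\mathrm p}}\mid\gamma^{n+1,n+1})\le H(\gamma^{\rho,\mu_{\mathrm p}}\mid\gamma^{n+1,n})$ holds without any bound on $\|h_n\|_\infty$.
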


\section{Proofs of Theorems~\ref{thm:stable} and~\ref{thm:exp}}\label{sec:proof-RS}
Before entering the proof, we switch from the multiplicative Schrödinger potentials used in Section~\ref{sec:RS} to
their logarithmic counterparts, which are more convenient for the analysis below. Recall that the optimal coupling was written as
\begin{equation}
\pi^{\rho,\mu}_{ij}(dx,dy)
=
f_i(x)r_{ij}(0,x;T,y)g_j(y)\,
\rho_i(dx)\mu_j(dy).
\label{eq:optimal_coupling_multiplicative}
\end{equation}
where
\[
c_{ij}(x,y):=-\log r_{ij}(0,x;T,y).
\]
We now introduce the logarithmic potentials
\[
\varphi_i(x):=-\log f_i(x),
\qquad
\psi_j(y):=-\log g_j(y),
\]
and similarly, for the Sinkhorn iterates,
\[
\varphi_i^n(x):=-\log f_i^n(x),
\qquad
\psi_j^n(y):=-\log g_j^n(y).
\]
With this notation, the optimal coupling can be rewritten as
\[
\pi^{\rho,\mu}_{ij}(dx,dy)
=
\exp\bigl(-\varphi_i(x)-\psi_j(y)-c_{ij}(x,y)\bigr)
\rho_i(dx)\mu_j(dy).
\]
Likewise, the half-step and full-step Sinkhorn couplings become
\[
\pi_{ij}^{n+1,n}(dx,dy)
=
\exp\bigl(-\varphi_i^{n+1}(x)-\psi_j^n(y)-c_{ij}(x,y)\bigr)
\rho_i(dx)\mu_j(dy),
\]
and
\[
\pi_{ij}^{n+1,n+1}(dx,dy)
=
\exp\bigl(-\varphi_i^{n+1}(x)-\psi_j^{n+1}(y)-c_{ij}(x,y)\bigr)
\rho_i(dx)\mu_j(dy).
\]
In terms of these logarithmic potentials, the Sinkhorn iteration reads
\begin{equation}
\label{eq:sinkhorn_update}
\begin{aligned}
\varphi_i^{n+1}(x)
&= \log \! \left( \sum_{j \in \mathcal{I}} \int_{\mathbb{R}^d}
\exp \bigl( -c_{ij}(x, y) - \psi_j^n(y) \bigr)
\, \mu_j(\mathrm{d}y) \right),
\quad i \in \mathcal{I}, \\
\psi_j^{n+1}(y)
&= \log \! \left( \sum_{i \in \mathcal{I}} \int_{\mathbb{R}^d}
\exp \bigl( -c_{ij}(x, y) - \varphi_i^{n+1}(x) \bigr)
\, \rho_i(\mathrm{d}x) \right),
\quad j \in \mathcal{I}.
\end{aligned}
\end{equation}
Equivalently, we may write the Sinkhorn iteration directly on the product space
$E:=\mathbb R^d\times\mathcal I$. For $z=(x,i)\in E$ and $w=(y,j)\in E$, set
\[
c(z,w):=c_{ij}(x,y).
\]
The logarithmic Sinkhorn iteration can then be written in the compact form
\[
\varphi^{n+1}(z)
=
\log\int_E
\exp\bigl(-c(z,w)-\psi^n(w)\bigr)\,\mu(dw),
\qquad
z\in E,
\]
\[
\psi^{n+1}(w)
=
\log\int_E
\exp\bigl(-c(z,w)-\varphi^{n+1}(z)\bigr)\,\rho(dz),
\qquad
w\in E.
\]
Both the componentwise notation and the compact notation on $E$ will be used below, depending on which one is more convenient in each argument.

To analyze the stability estimate and the properties of the Sinkhorn algorithm, we first need to study the behavior of both the optimal potentials and the potentials generated by the Sinkhorn iteration. The potentials are determined only up to an additive constant, whereas their oscillations are intrinsic and independent of the chosen normalization. The following lemma shows that the oscillations of both the optimal potentials and the Sinkhorn iterates are controlled by the oscillation of the cost function.

For a bounded function $u$ defined on a set $A$, we write
\[
\mathrm{osc}_A(u) := \sup_{a, a' \in A} |u(a) - u(a')| = \sup_{a \in A} u(a) - \inf_{a \in A} u(a).
\]

A proof of the oscillation bound for the optimal potentials is given in \cite[Lemma 4.11]{nutz2022eot}. Here we use the same argument and apply it to the potentials generated by the Sinkhorn iteration.

\begin{lemma}
\label{lem:bound_potential}
Assume that the cost function \(c\) is bounded, with
\[
\|c\|_{L^\infty(E\times E)} \le M.
\]
Let \((\varphi,\psi)\) be optimal Schrödinger potentials, and let
\((\varphi^n,\psi^n)_{n\ge0}\) be the Sinkhorn iterates defined above. Then the following holds:

\begin{enumerate}
    \item \textbf{Optimal potentials.}
    \[
    \operatorname{osc}_E(\varphi)\le 2M,
    \qquad
    \operatorname{osc}_E(\psi)\le 2M.
    \]

    \item \textbf{Sinkhorn iterates.} For every \(n\ge0\),
    \[
    \operatorname{osc}_E(\varphi^{n+1})\le 2M,
    \qquad
    \operatorname{osc}_E(\psi^{n+1})\le 2M.
    \]
\end{enumerate}
\end{lemma}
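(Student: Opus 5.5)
The proof rests on one elementary observation. Any function defined as a log-integral of $e^{-c(z,\cdot)}$ against a fixed positive measure inherits the oscillation of $c$ in the free variable, and this holds whatever that measure is. The Sinkhorn updates \eqref{eq:sinkhorn_update} have exactly this form, and so do the Schrödinger equations for the optimal potentials (in logarithmic form, from Theorem~\ref{thm:classic}). Both parts of Lemma~\ref{lem:bound_potential} will therefore follow from the same two-line estimate.

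\textbf{Step 1: Sinkhorn iterates.} Fix $n\ge0$ and $z,z'\in E$. For every $w$ we have $-c(z,w)\le M$ and $-c(z',w)\ge -M$, hence
\[
e^{-c(z,w)-\psi^n(w)}\le e^{2M}\,e^{-c(z',w)-\psi^n(w)} .
\]
Integrating against $\mu(dw)$ and taking logarithms in the first line of \eqref{eq:sinkhorn_update} gives $\varphi^{n+1}(z)\le \varphi^{n+1}(z')+2M$. This requires the integral $\int_E e^{-c(z',w)-\psi^n(w)}\mu(dw)$ to be finite and positive, so that $\varphi^{n+1}$ is a well-defined real-valued function. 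For $n=0$ this holds whenever $g^0$ is strictly positive and $\int g^0\,d\mu<\infty$, since $e^{-c}\in[e^{-M},e^{M}]$. For $n\ge1$ the potentials $\psi^n$ are themselves bounded (they are produced by the previous step), so it holds automatically. Swapping $z$ and $z'$ and taking the supremum gives $\operatorname{osc}_E(\varphi^{n+1})\le 2M$. The identical argument applied to the second update, now integrating against $\rho(dz)$ with $\varphi^{n+1}$ in the exponent, gives $\operatorname{osc}_E(\psi^{n+1})\le 2M$. Note that no regularity of $\psi^n$ is used, only its finiteness $\mu$-a.e. and the finiteness of the integral, which is why the bound is uniform in $n$ and independent of the initialization.

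\textbf{Step 2: optimal potentials.} Theorem~\ref{thm:classic} and the change of kernel \eqref{eq:change_kernel} give the Schrödinger system in logarithmic form:
\[
\varphi(z)=\log\int_E e^{-c(z,w)-\psi(w)}\mu(dw) \quad \rho\text{-a.e.},
\qquad
\psi(w)=\log\int_E e^{-c(z,w)-\varphi(z)}\rho(dz) \quad \mu\text{-a.e.}
\]
Finiteness of the integrals comes from the fact that $\pi^{\rho,\mu}$ is a probability measure. Since the potentials are only determined $\rho$- and $\mu$-a.e., I would first redefine them everywhere on $E$ by the right-hand sides. This does not change $\pi^{\rho,\mu}$. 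Alternatively, one can read $\operatorname{osc}_E$ as an essential oscillation. With either convention, the Step~1 estimate applies verbatim and yields $\operatorname{osc}_E(\varphi),\operatorname{osc}_E(\psi)\le 2M$, recovering \cite[Lemma 4.11]{nutz2022eot}.

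\textbf{Main obstacle.} The main difficulty is not the estimate itself but the measure-theoretic bookkeeping. One must make sure each integral is finite and nonzero so that the logarithms are real, and one must handle the a.e.\ ambiguity of the optimal potentials through the canonical everywhere-defined version above. In the present setting $c$ is only assumed bounded on $\operatorname{supp}(\hat\rho_i)\times\operatorname{supp}(\hat\mu_j)$ by $M_0$ (Assumption~\ref{assum:map_c2}). So I would take $E$ to mean the union of these supports (with $M=M_0$) and restrict all suprema accordingly. The integrals only see these supports anyway, so nothing changes in the argument.
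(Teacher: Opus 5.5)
Your proposal is correct and follows the paper's argument. The paper also compares the log-integrands pointwise, obtaining $\varphi^{n+1}(z_1)-\varphi^{n+1}(z_2)\le\sup_{w}\bigl|c(z_1,w)-c(z_2,w)\bigr|\le 2M$, and treats the optimal potentials ``analogously'' via the Schr\"odinger system; your added bookkeeping (finiteness of the integrals, the condition $\int g^0\,d\mu<\infty$, and choosing everywhere-defined versions of $(\varphi,\psi)$) supplies details the paper leaves implicit without changing the route.
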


\begin{proof}
Fix \(n\ge0\), and let \(z_1,z_2\in E\) be such that
\(\varphi^{n+1}(z_1)\ge \varphi^{n+1}(z_2)\). Then
\[
\begin{aligned}
\bigl|\varphi^{n+1}(z_1)-\varphi^{n+1}(z_2)\bigr|
&=
\log \int_E e^{-\psi^n(w)-c(z_2,w)}\,\mu(\mathrm{d}w)
-
\log \int_E e^{-\psi^n(w)-c(z_1,w)}\,\mu(\mathrm{d}w)
\\
&=
\log \int_E
e^{c(z_1,w)-c(z_2,w)-\psi^n(w)-c(z_1,w)}\,\mu(\mathrm{d}w)
-
\log \int_E e^{-\psi^n(w)-c(z_1,w)}\,\mu(\mathrm{d}w)
\\
&\le
\sup_{w\in E}\bigl(c(z_1,w)-c(z_2,w)\bigr)
\\
&\le
\sup_{w\in E}\bigl|c(z_1,w)-c(z_2,w)\bigr|.
\end{aligned}
\]
Taking the supremum over \(z_1,z_2\in E\), we obtain
\[
\operatorname{osc}_E(\varphi^{n+1})
\le
\sup_{w\in E}\operatorname{osc}_E\bigl(c(\cdot,w)\bigr)
\le 2M.
\]
The bound for \(\operatorname{osc}_E(\psi^{n+1})\) is obtained in the same way from the other Sinkhorn update.
The bounds for the optimal potentials \((\varphi,\psi)\) are also obtained analogously from the Schrödinger system.
\end{proof}

\begin{remark}[\(C^2\)-regularity of the potentials]
Assume moreover that \(c\) is \(C^2\) with respect to the continuous variables, and that all its first and second derivatives are bounded. By the logarithmic Sinkhorn update formula \eqref{eq:sinkhorn_update} and Lemma~\ref{lem:bound_potential}, the relevant exponential integrands and their first and second derivatives are dominated by integrable functions. Hence, by differentiation under the integral sign and the dominated convergence theorem, the Sinkhorn iterates \(\varphi^n,\psi^n\) are \(C^2\) with respect to the continuous variables. The same argument applied to the limiting Schrödinger system also gives the \(C^2\)-regularity of the optimal potentials \(\varphi,\psi\).
\end{remark}

We follow the semiconcavity approach of Chiarini, Conforti, Greco, and Tamanini \cite{ChiariniConfortiGrecoTamanini2024SemiconcStabConv}, who use semiconcavity estimates for Schrödinger potentials to establish stability of entropic plans and exponential convergence of Sinkhorn algorithm. We now turn to the study of stability properties of optimal couplings. The notion of semiconcavity of cost functions and potentials often turns out to be essential. We recall the definition of semiconcavity used below.A function \(f:\mathbb{R}^d\to\mathbb{R}\) is said to be \(\Lambda\)-semiconcave if, for every \(x,y\in\mathbb{R}^d\),
\[
f(y)-f(x)
\le
\langle \nabla f(x),y-x\rangle
+
\frac{\Lambda}{2}|y-x|^2.
\]
If \(f\) is \(C^2\), this condition is equivalent to the Hessian upper bound \(\nabla^2 f(x)\preceq \Lambda I\).

In the multi-regime state space \(E=\mathbb{R}^d\times\mathcal I\), a function \(F:E\to\mathbb{R}\) is said to be \(\Lambda\)-semiconcave if, for every regime \(i\in\mathcal I\), its restriction \(F_i(x):=F(x,i)\) is \(\Lambda\)-semiconcave on \(\mathbb{R}^d\). Equivalently, when each \(F_i\) is \(C^2\), this means that \(\nabla_x^2 F_i(x)\preceq \Lambda I\) for every \(x\in\mathbb{R}^d\) and \(i\in\mathcal I\).

\begin{lemma}
\label{lem:conditionlaw}
Assume that the cost function \(c\) is bounded, with
$
\|c\|_{L^\infty(E\times E)}\le M.
$
Let \(\pi\) be the optimal coupling between \(\rho\) and \(\mu\), and let
\((\varphi,\psi)\) be a pair of optimal Schrödinger potentials. Assume moreover
that, for every \(z_1\in E\) and every \(j\in\mathcal I\), the map
\[
y\longmapsto c\bigl(z_1,(y,j)\bigr)+\psi_j(y)
\]
is \(C^1\) and \(\Lambda\)-semiconcave on \(\mathbb R^d\). Then, for
\(\mu\)-a.e. \(z_2=(y,j)\) and \(z_2'=(y',j')\), one has
\[
H\bigl(\pi(\cdot\mid z_2)\mid \pi(\cdot\mid z_2')\bigr)
\le
\begin{cases}
\dfrac{\Lambda}{2}|y-y'|^2, & j=j',\\[0.6em]
4M, & j\neq j'.
\end{cases}
\]
\end{lemma}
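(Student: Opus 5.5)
The plan is to write the conditional laws explicitly. From the representation $\pi(dz_1,dz_2)=\exp(-\varphi(z_1)-\psi(z_2)-c(z_1,z_2))\rho(dz_1)\mu(dz_2)$ and the second Schrödinger equation (which says $\int_E e^{-\varphi(z_1)-\psi(z_2)-c(z_1,z_2)}\rho(dz_1)=1$ for $\mu$-a.e.\ $z_2$), we get for $\mu$-a.e.\ $z_2$
\[
\pi(dz_1\mid z_2)=\exp\bigl(-\varphi(z_1)-\psi(z_2)-c(z_1,z_2)\bigr)\rho(dz_1).
\]
Hence the log-density ratio is
\[
\log\frac{d\pi(\cdot\mid z_2)}{d\pi(\cdot\mid z_2')}(z_1)=F(z_1,z_2')-F(z_1,z_2),
\]
with $F(z_1,w):=c(z_1,w)+\psi(w)$. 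Both conditionals are equivalent to $\rho$, so the entropy is finite and equals $\int \bigl(F(z_1,z_2')-F(z_1,z_2)\bigr)\pi(dz_1\mid z_2)$.

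\textbf{Case $j\neq j'$.} Bound the integrand pointwise by $|c(z_1,z_2')-c(z_1,z_2)|+|\psi(z_2')-\psi(z_2)|\le 2M+\operatorname{osc}_E(\psi)\le 4M$, using $\|c\|_\infty\le M$ and Lemma~\ref{lem:bound_potential}. Integrating against the probability measure $\pi(\cdot\mid z_2)$ gives $4M$.

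\textbf{Case $j=j'$.} Here $F(z_1,(\cdot,j))$ is $C^1$ and $\Lambda$-semiconcave, so
\[
F(z_1,(y',j))-F(z_1,(y,j))\le \langle \nabla_y F(z_1,(y,j)),\,y'-y\rangle+\tfrac{\Lambda}{2}|y'-y|^2 .
\]
Integrate in $z_1$ against $\pi(\cdot\mid z_2)$. The quadratic term gives the claimed bound, so it remains to show
\[
\int \nabla_y F(z_1,(y,j))\,\pi(dz_1\mid (y,j))=0 .
\]
This is the first-order condition obtained by differentiating the normalization identity $\int e^{-\varphi(z_1)-F(z_1,(y,j))}\rho(dz_1)=1$ in $y$. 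Equivalently, write $\psi_j(y)=\log\int e^{-\varphi(z_1)-c(z_1,(y,j))}\rho(dz_1)$ and differentiate: $\nabla\psi_j(y)=\int \nabla_y c\;\pi(dz_1\mid(y,j))$.

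\textbf{Main obstacle.} The delicate step is making this normalization identity hold for every $y$ in the relevant set, not just $\mu_j$-a.e., and justifying differentiation under the integral. I would resolve it by taking $\psi$ to be the canonical version defined everywhere by the log-integral formula; this is the version used in the $C^2$-regularity remark. Differentiation under the integral is then justified by the bounded derivatives of $c$ on the compact supports, and the bounded potentials from Lemma~\ref{lem:bound_potential}, via dominated convergence. With this version the identity holds for all $y$, and the a.e.\ qualifier is only needed to identify the conditional law.
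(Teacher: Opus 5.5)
Your proposal follows the paper's proof essentially step for step: the explicit conditional law, the pointwise $4M$ bound from $\|c\|_\infty\le M$ and $\operatorname{osc}_E(\psi)\le 2M$ when $j\neq j'$, and, when $j=j'$, semiconcavity together with a vanishing linear term obtained by differentiating the normalization identity. Your choice of the everywhere-defined log-integral version of $\psi$ handles an a.e.\ subtlety that the paper passes over silently. One small sign slip in your ``equivalently'' remark: differentiating $\psi_j(y)=\log\int e^{-\varphi(z_1)-c(z_1,(y,j))}\rho(dz_1)$ gives $\nabla\psi_j(y)=-\int\nabla_y c\,\pi(dz_1\mid(y,j))$, and it is exactly this minus sign that makes $\int\nabla_y F\,\pi(dz_1\mid(y,j))=0$.
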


\begin{proof}
By \eqref{eq:optimal_coupling_multiplicative} and the definitions
\(\varphi=-\log f\), \(\psi=-\log g\), and \(c=-\log r\), the optimal
coupling admits the logarithmic representation. Disintegrating
this representation with respect to its second marginal, we obtain that,
for \(\mu\)-a.e. \(z_2\in E\), the conditional law of the first variable
given the second one is
\[
\pi(\mathrm dz_1\mid z_2)
=
\exp\bigl(
-c(z_1,z_2)-\varphi(z_1)-\psi(z_2)
\bigr)\rho(\mathrm dz_1).
\]
Hence, for \(\mu\)-a.e. \(z_2,z_2'\in E\),
\[
\log
\frac{\mathrm d\pi(\cdot\mid z_2)}
{\mathrm d\pi(\cdot\mid z_2')}(z_1)
=
c(z_1,z_2')+\psi(z_2')
-
c(z_1,z_2)-\psi(z_2).
\]
Therefore,
\[
\begin{aligned}
H\bigl(\pi(\cdot\mid z_2)\mid \pi(\cdot\mid z_2')\bigr)
&=
\int_E
\bigl[
c(z_1,z_2')+\psi(z_2')
-
c(z_1,z_2)-\psi(z_2)
\bigr]
\pi(\mathrm dz_1\mid z_2).
\end{aligned}
\]
First suppose that \(j\neq j'\). Since \(|c|\le M\), and since
\(\operatorname{osc}_E(\psi)\le 2M\) by Lemma~\ref{lem:bound_potential}, we have
\[
c(z_1,z_2')+\psi(z_2')
-
c(z_1,z_2)-\psi(z_2)
\le 4M.
\]
Thus,
\[
H\bigl(\pi(\cdot\mid z_2)\mid \pi(\cdot\mid z_2')\bigr)
\le 4M.
\]
Now suppose that \(j=j'\). Write
$z_2=(y,j),  z_2'=(y',j)$.
For each \(z_1\in E\), define
\[
\ell_{z_1,j}(u)
:=
c\bigl(z_1,(u,j)\bigr)+\psi_j(u),
\qquad u\in\mathbb R^d.
\]
Then
\[
\begin{aligned}
H\bigl(\pi(\cdot\mid (y,j))\mid \pi(\cdot\mid (y',j))\bigr)
&=
\int_E
\bigl[
\ell_{z_1,j}(y')-\ell_{z_1,j}(y)
\bigr]
\pi(\mathrm dz_1\mid (y,j)).
\end{aligned}
\]
By \(\Lambda\)-semiconcavity of \(\ell_{z_1,j}\),
\[
\ell_{z_1,j}(y')-\ell_{z_1,j}(y)
\le
\left\langle
\nabla \ell_{z_1,j}(y),y'-y
\right\rangle
+
\frac{\Lambda}{2}|y'-y|^2.
\]
Integrating with respect to \(\pi(\mathrm dz_1\mid (y,j))\), we obtain
\[
\begin{aligned}
H\bigl(\pi(\cdot\mid (y,j))\mid \pi(\cdot\mid (y',j))\bigr)
&\le
\left\langle
\int_E
\nabla \ell_{z_1,j}(y)\,
\pi(\mathrm dz_1\mid (y,j)),
y'-y
\right\rangle
+
\frac{\Lambda}{2}|y'-y|^2.
\end{aligned}
\]
It remains to show that the linear term vanishes. The normalization identity
for the conditional law gives
\[
1
=
\int_E
\exp\bigl(
-c(z_1,(y,j))-\varphi(z_1)-\psi_j(y)
\bigr)
\rho(\mathrm dz_1).
\]
Differentiating this identity with respect to \(y\) yields
$\int_E \nabla \ell_{z_1,j}(y)\, \pi(\mathrm dz_1\mid (y,j)) = 0$.
Therefore,
\[
H\bigl(\pi(\cdot\mid (y,j))\mid \pi(\cdot\mid (y',j))\bigr)
\le
\frac{\Lambda}{2}|y-y'|^2.
\]
Combining the two cases concludes the proof.
\end{proof}

The following lemma verifies the semiconcavity condition required in the preceding lemma.
\begin{lemma}\label{lem:semiconcavity-psi}
Under Assumption~\ref{assum:map_c2}, for every $i,j\in\mathcal I$ with $\bar\rho(i)\bar\nu(j)>0$, every
$x\in \operatorname{supp}(\hat\rho_i)$, and every
$y\in \operatorname{supp}(\hat\nu_j)$, one has
\[
\nabla_y^2\bigl(c_{ij}(x,y)+\psi_j(y)\bigr)\preceq \Lambda I_d,
\qquad
\Lambda:=2M_2+M_1^2.
\]
In particular, for every fixed $i\in I$ and
$x\in \operatorname{supp}(\hat\rho_i)$, the map
\[
y\longmapsto c_{ij}(x,y)+\psi_j(y)
\]
is $\Lambda$-semiconcave on $\operatorname{supp}(\hat\nu_j)$.
\end{lemma}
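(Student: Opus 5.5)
The plan is to obtain an explicit formula for $\nabla_y^2\psi_j$ by differentiating the Schrödinger system twice, and then to bound each term by $M_1$ and $M_2$.

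Here $\psi$ denotes the second optimal potential for the pair $(\rho,\nu)$. It is characterized by
\[
\psi_j(y)=\log\sum_{i\in\mathcal I}\int_{\mathbb R^d}\exp\bigl(-c_{ij}(x,y)-\varphi_i(x)\bigr)\,\rho_i(\mathrm dx),
\]
and we may take this as the definition of $\psi_j$ for every $y\in\operatorname{supp}(\hat\nu_j)$. By the $C^2$-regularity remark, we can differentiate under the integral sign. This is justified because $c$ and its first two derivatives are bounded on the compact supports (Assumption~\ref{assum:map_c2}), and $\varphi$ has bounded oscillation by Lemma~\ref{lem:bound_potential}.

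Introduce the probability measure on $E$
\[
\pi^{y,j}(\mathrm dz_1)=\exp\bigl(-c(z_1,(y,j))-\varphi(z_1)-\psi_j(y)\bigr)\rho(\mathrm dz_1),
\]
which is the conditional law $\pi(\cdot\mid (y,j))$. Differentiating once gives
\[
\nabla_y\psi_j(y)=-\int\nabla_y c(z_1,(y,j))\,\pi^{y,j}(\mathrm dz_1).
\]
Differentiating again yields the standard log-Laplace formula
\[
\nabla^2_y\psi_j(y)=-\int\nabla^2_{yy}c\,\mathrm d\pi^{y,j}+\operatorname{Cov}_{\pi^{y,j}}\bigl(\nabla_y c(\cdot,(y,j))\bigr).
\]
The covariance matrix is at most the second moment matrix $\int\nabla_yc\,\nabla_yc^\top\,\mathrm d\pi^{y,j}\preceq M_1^2 I_d$, since every $z_1$ in the support of $\rho$ has $x\in\operatorname{supp}\hat\rho_i$ with $\bar\rho(i)>0$, where $|\nabla_y c_{ij}|\le M_1$. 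The Hessian term satisfies $-\int\nabla^2_{yy}c\,\mathrm d\pi^{y,j}\preceq M_2 I_d$. Hence
\[
\nabla_y^2\psi_j\preceq (M_2+M_1^2)I_d.
\]
Adding $\nabla^2_{yy}c_{ij}(x,y)\preceq M_2 I_d$ gives the bound $2M_2+M_1^2=\Lambda$.

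The only delicate point is the domain on which the bounds apply. $M_1$ and $M_2$ are suprema over $\operatorname{supp}\hat\rho_i\times\operatorname{supp}\hat\mu_j$, and $\operatorname{supp}\hat\nu_j=\operatorname{supp}\hat\mu_j$ by the remark after Theorem~\ref{thm:stable}. Since $\operatorname{supp}(\hat\nu_j)$ need not be convex, the passage from the Hessian bound to $\Lambda$-semiconcavity in the sense of the definition holds only along segments contained in the set. To handle this, I would either extend $\psi_j$ through the integral formula to a neighborhood (where $c$ is still $C^2$ by continuity), or state the result as the pointwise Hessian bound as written, which is what Lemma~\ref{lem:conditionlaw} uses via segments.

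The main obstacle is justifying differentiation under the integral together with this support issue. Both follow from the compactness and uniform bounds supplied by Assumption~\ref{assum:map_c2}.
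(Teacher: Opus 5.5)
Your proposal is correct and is essentially the paper's own proof. Like the paper, you differentiate the log-integral representation of $\psi_j$ twice to obtain $\nabla_y^2\psi_j=-\int\nabla^2_{yy}c\,\mathrm d\pi(\cdot\mid y,j)+\operatorname{Cov}\bigl(\nabla_y c\bigr)$, then bound the two Hessian terms by $M_2$ each and the covariance by $M_1^2$.

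Your remark on the possibly non-convex support of $\hat\nu_j$ is a fair point that the paper passes over silently. Note, however, that only your first fix helps Lemma~\ref{lem:conditionlaw}: extending $\psi_j$ through the integral formula to a set containing the relevant segments, with the bounds on $c$ still holding there. That lemma needs the Hessian bound along the whole segment between two support points, so a pointwise bound at points of the support does not suffice.
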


\begin{proof}
Fix $j\in\mathcal I$ and $y\in \operatorname{supp}(\hat\nu_j)$. Denote by
$\pi\bigl(\mathrm d(x',i')\mid y,j\bigr)$
the conditional law of the first variable under $\pi$, given that the second variable
is equal to $(y,j)$. Note that by Assumptions \ref{assum:map_c2} and \ref{assum:measures}(2) we have that $ \psi_j $ is twice differentiable on $ \operatorname{supp}(\hat\nu_j) $.
Differentiating the logarithmic representation of $\psi_j$, we obtain
\[
\nabla_y^2\psi_j(y)
=
-
\int
\nabla_{yy}^2 c_{i'j}(x',y)\,
\pi\bigl(\mathrm d(x',i')\mid y,j\bigr)
+
\operatorname{Cov}_{(X',I')\sim\pi(\cdot\mid y,j)}
\!\bigl(\nabla_y c_{I'j}(X',y)\bigr).
\]
Therefore,
\[
\begin{aligned}
\nabla_y^2\bigl(c_{ij}(x,y)+\psi_j(y)\bigr)
&=
\nabla_{yy}^2 c_{ij}(x,y)
-
\int
\nabla_{yy}^2 c_{i'j}(x',y)\,
\pi\bigl(\mathrm d(x',i')\mid y,j\bigr) \\
&\quad+
\operatorname{Cov}_{(X',I')\sim\pi(\cdot\mid y,j)}
\!\bigl(\nabla_y c_{I'j}(X',y)\bigr).
\end{aligned}
\]
By the definition of $M_2$,
\[
\nabla_{yy}^2 c_{ij}(x,y)\preceq M_2 I_d,
\qquad
-
\int
\nabla_{yy}^2 c_{i'j}(x',y)\,
\pi\bigl(\mathrm d(x',i')\mid y,j\bigr)
\preceq M_2 I_d.
\]
Moreover, by the definition of $M_1$,
$\bigl\|\nabla_y c_{i'j}(x',y)\bigr\|\le M_1,$
hence
\[
\operatorname{Cov}_{(X',I')\sim\pi(\cdot\mid y,j)}
\!\bigl(\nabla_y c_{I'j}(X',y)\bigr)
\preceq
M_1^2 I_d.
\]
Combining the above estimates yields
\[
\nabla_y^2\bigl(c_{ij}(x,y)+\psi_j(y)\bigr)
\preceq
(2M_2+M_1^2)I_d.
\]
This proves the claim.
\end{proof}

To better compare probability distributions on the multi-regime space \(\mathbb{R}^d \times \mathcal{I}\), we introduce a new cost function adapted to its product structure. This cost compares points within the same regime through the usual squared Euclidean distance, while assigning a fixed unit cost to jumps between different regimes. We do not regard this function as a genuine metric, since it does not necessarily satisfy the triangle inequality. Nevertheless, it induces a natural Wasserstein-type transport cost, which will be useful in the stability estimates below.

Define the cost \(\omega\) on \((\mathbb{R}^d \times \mathcal I)^2\) by
\[
\omega\bigl((x,i),(y,j)\bigr) := |x-y|^2 \mathbf{1}_{\{i=j\}} + \mathbf{1}_{\{i\neq j\}}.
\]

Denote by \(W_\omega\) the optimal transport cost associated with the cost function \(\omega\), namely,

\[
W_\omega^2(\mu,\nu) := \inf_{\pi\in\Pi(\mu,\nu)} \int_{(\mathbb{R}^d\times \mathcal I)^2} \omega(z,z')\,\pi(\mathrm{d}z,\mathrm{d}z').
\]

For the proof of the stability estimate, we shall use the logarithmic Schrödinger potentials associated with the endpoint couplings introduced in Subsection~\ref{subsec:notation}. Fix $\rho \in \mathcal{P}(E)$, and let $\eta \in \mathcal{P}(E)$. We denote by $(\varphi^\eta,\psi^\eta)$ a pair of logarithmic Schrödinger potentials for $\pi^{\rho,\eta}$, so that
\[
\pi^{\rho,\eta}(\mathrm{d}z_1,\mathrm{d}z_2)
=
\exp
\left(
-c(z_1,z_2)
-
\varphi^\eta(z_1)
-
\psi^\eta(z_2)
\right)
\rho(\mathrm{d}z_1)\eta(\mathrm{d}z_2).
\]

\begin{lemma}[Stability]
\label{thm:stability_first}
Under the assumptions and notations of Theorem~\ref{thm:stable}, one has
\[
\mathcal H\bigl(Q^{\rho,\mu}\mid Q^{\rho,\nu}\bigr)
=
H\bigl(\pi^{\rho,\mu}\mid \pi^{\rho,\nu}\bigr)
\le
H(\mu\mid \nu)
+
B\,W_\omega^2(\mu,\nu).
\]
\end{lemma}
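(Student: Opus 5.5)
The plan is to follow the coupling argument of \cite{ChiariniConfortiGrecoTamanini2024SemiconcStabConv}. The path-space identity $\mathcal H(Q^{\rho,\mu}\mid Q^{\rho,\nu})=H(\pi^{\rho,\mu}\mid\pi^{\rho,\nu})$ is immediate from the lifting \eqref{eq:static-to-dyn}. Both path measures share the reference bridge $P(\cdot\mid Z_0,Z_T)$, so the conditional term in the decomposition \eqref{eq:entropy_decomposition} vanishes. It remains to bound the endpoint entropy.

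First, disintegrate both couplings with respect to the second marginal:
\[
\pi^{\rho,\mu}(\mathrm dz_1,\mathrm dz_2)=\mu(\mathrm dz_2)\,\pi^{\rho,\mu}(\mathrm dz_1\mid z_2),\qquad \pi^{\rho,\nu}(\mathrm dz_1,\mathrm dz_2)=\nu(\mathrm dz_2)\,\pi^{\rho,\nu}(\mathrm dz_1\mid z_2).
\]
The chain rule for relative entropy then gives
\[
H(\pi^{\rho,\mu}\mid\pi^{\rho,\nu})=H(\mu\mid\nu)+\int_E H\bigl(\pi^{\rho,\mu}(\cdot\mid z_2)\mid\pi^{\rho,\nu}(\cdot\mid z_2)\bigr)\,\mu(\mathrm dz_2).
\]
The task therefore reduces to showing that the integrated conditional entropy is at most $B\,W_\omega^2(\mu,\nu)$. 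Since $\pi^{\rho,\mu}(\cdot\mid z_2)$ and $\pi^{\rho,\nu}(\cdot\mid z_2)$ have densities $e^{-c(z_1,z_2)-\varphi^\mu(z_1)-\psi^\mu(z_2)}$ and $e^{-c(z_1,z_2)-\varphi^\nu(z_1)-\psi^\nu(z_2)}$ with respect to $\rho$, the $c$ terms cancel in the log-ratio. The integrand is then
\[
\int_E\bigl(\varphi^\nu-\varphi^\mu\bigr)(z_1)\,\pi^{\rho,\mu}(\mathrm dz_1\mid z_2)+\psi^\nu(z_2)-\psi^\mu(z_2).
\]
Integrating in $\mu(\mathrm dz_2)$ and using that $\pi^{\rho,\mu}$ has first marginal $\rho$, we get
\[
\int(\varphi^\nu-\varphi^\mu)\,\mathrm d\rho+\int(\psi^\nu-\psi^\mu)\,\mathrm d\mu .
\]

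Second, rewrite this difference in terms of $\pi^{\rho,\nu}$ alone, so as to use the semiconcavity of $\nu$'s potentials. Since $\pi^{\rho,\nu}$ also has first marginal $\rho$, we have $\int\varphi^\nu\,\mathrm d\rho=\int\varphi^\nu\,\mathrm d\pi^{\rho,\nu}$. The Schrödinger equation for $\varphi^\mu$ expresses $\varphi^\mu(z_1)=\log\int e^{-c(z_1,w)-\psi^\mu(w)}\mu(\mathrm dw)$; I will use it via Jensen or a Gibbs variational argument only if needed.

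The cleaner route, as in CCGT, is to fix an optimal coupling $\gamma\in\Pi(\mu,\nu)$ for $W_\omega$ and compare conditionals at matched points:
\[
H\bigl(\pi^{\rho,\mu}(\cdot\mid z_2)\mid\pi^{\rho,\nu}(\cdot\mid z_2)\bigr)
\]
is to be controlled through $H\bigl(\pi^{\rho,\nu}(\cdot\mid z_2)\mid\pi^{\rho,\nu}(\cdot\mid z_2')\bigr)$ with $(z_2,z_2')\sim\gamma$. Concretely, I will use the variational (Donsker–Varadhan) characterization together with the convexity of $H$ in the pair. The identity $\pi^{\rho,\mu}(\mathrm dz_1)=\rho=\int\pi^{\rho,\nu}(\mathrm dz_1\mid z_2')\,\nu(\mathrm dz_2')$ suggests representing the $\mu$-conditional as a mixture. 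The inequality I will aim for is
\[
\int H\bigl(\pi^{\rho,\mu}(\cdot\mid z_2)\mid\pi^{\rho,\nu}(\cdot\mid z_2)\bigr)\,\mu(\mathrm dz_2)\le\int H\bigl(\pi^{\rho,\nu}(\cdot\mid z_2')\mid\pi^{\rho,\nu}(\cdot\mid z_2)\bigr)\,\gamma(\mathrm dz_2,\mathrm dz_2').
\]
This should follow from optimality of $\pi^{\rho,\mu}$ for the EOT problem. Compare its value against the competitor $\tilde\pi(\mathrm dz_1,\mathrm dz_2)=\int\pi^{\rho,\nu}(\mathrm dz_1\mid z_2')\,\gamma(\mathrm dz_2,\mathrm dz_2')$, which lies in $\Pi(\rho,\mu)$. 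The EOT objective relative to the Gibbs kernel of $\nu$'s potentials differs from $H(\cdot\mid\pi^{\rho,\nu})$ only by terms linear in the marginals. Hence $\pi^{\rho,\mu}$ minimizes $H(\cdot\mid\pi^{\rho,\nu})-H(\mu\mid\nu)$-type quantities over $\Pi(\rho,\mu)$, giving $H(\pi^{\rho,\mu}\mid\pi^{\rho,\nu})\le H(\tilde\pi\mid\pi^{\rho,\nu})$. Joint convexity of relative entropy then bounds the latter by $H(\mu\mid\nu)$ plus the displayed $\gamma$-integral.

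Third, bound the integrand pointwise using Lemma~\ref{lem:conditionlaw} applied to the pair $(\rho,\nu)$. The hypotheses hold because, by the remark after Theorem~\ref{thm:stable}, $\operatorname{supp}\nu=\operatorname{supp}\mu$ and the constants $M_0,M_1,M_2$ are unchanged. Lemma~\ref{lem:semiconcavity-psi} supplies the semiconcavity constant $\Lambda=2M_2+M_1^2$, and $|c|\le M_0$. The lemma then gives the bound $\tfrac{\Lambda}{2}|y-y'|^2=(M_2+M_1^2/2)|y-y'|^2$ when $j=j'$, and $4M_0$ when $j\ne j'$. Both are at most $B\,\omega(z_2,z_2')$, and integrating against the optimal $\gamma$ yields $B\,W_\omega^2(\mu,\nu)$.

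The main obstacle is the second step: making rigorous the optimality comparison $H(\pi^{\rho,\mu}\mid\pi^{\rho,\nu})\le H(\tilde\pi\mid\pi^{\rho,\nu})$. This requires checking that $\log\frac{\mathrm d\pi^{\rho,\mu}}{\mathrm d\pi^{\rho,\nu}}$ splits as $a(z_1)+b(z_2)$ with integrable $a,b$, which holds by boundedness of the potentials from Lemma~\ref{lem:bound_potential}. One must also handle the chain rule with mutually absolutely continuous $\mu,\nu$, guaranteed by \eqref{eq:mutual_finite_entropy}. A further technical point is that semiconcavity is only available on $\operatorname{supp}\hat\nu_j$, which is handled because $\gamma$ is concentrated on $\operatorname{supp}\mu\times\operatorname{supp}\nu$.
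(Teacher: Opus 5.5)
Your proposal is correct and is essentially the paper's proof. It uses the same competitor $\tilde\pi(\mathrm dz_1,\mathrm dz_2)=\int\pi^{\rho,\nu}(\mathrm dz_1\mid z_2')\,\gamma(\mathrm dz_2,\mathrm dz_2')\in\Pi(\rho,\mu)$. It makes the same comparison $H(\pi^{\rho,\mu}\mid\pi^{\rho,\nu})\le H(\tilde\pi\mid\pi^{\rho,\nu})$, justified through the splitting $\log\frac{\mathrm d\pi^{\rho,\mu}}{\mathrm d\pi^{\rho,\nu}}=(\varphi^\nu-\varphi^\mu)(z_1)+\bigl(\psi^\nu-\psi^\mu+\log\frac{\mathrm d\mu}{\mathrm d\nu}\bigr)(z_2)$. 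It then applies the chain rule, convexity of $H$ in its first argument, and the pointwise bound from Lemmas~\ref{lem:conditionlaw} and~\ref{lem:semiconcavity-psi}.

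The differences are cosmetic. You fix an optimal $\gamma$ for $W_\omega$, which exists since $\omega$ is continuous and nonnegative, whereas the paper takes an arbitrary $\tau\in\Pi(\mu,\nu)$ and passes to the infimum. Also, the $\mu$-integrability of the $\log\frac{\mathrm d\mu}{\mathrm d\nu}$ part of the second summand comes from $H(\mu\mid\nu)<\infty$, not from Lemma~\ref{lem:bound_potential}.
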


\begin{proof}
If $H(\mu\mid \nu)=+\infty$, then the claimed estimate is immediate. Hence,
we may assume that
\[
H(\mu\mid \nu)<+\infty,
\]
and therefore $\mu\ll\nu$.

Denote $ \varphi^{\mu} $, $ \psi^{\mu} $ the logarithmic Schr\"odinger potentials corresponding to endpoints marginals $ \rho, \mu $ and  $ \varphi^{\nu} $, $ \psi^{\nu} $ the logarithmic Schr\"odinger potentials corresponding to endpoints marginals $ \rho, \nu $.

By Lemma~\ref{lem:semiconcavity-psi}, for every $z_1=(x,i)\in \operatorname{supp}(\hat\rho_i)$ and every
$j\in\mathcal I$ with $\bar\nu(j)>0$, the map
\[
y \longmapsto c_{ij}(x,y)+\psi_j^\nu(y)
\]
is $\Lambda$-semiconcave on $\operatorname{supp}(\hat\nu_j)$ with
$\Lambda=2M_2+M_1^2$.
Since $|c|\le M_0$ on the relevant supports, Lemma~\ref{lem:conditionlaw}
applied to the optimal coupling $\pi^{\rho,\nu}$ yields that, for every
$b=(y,i), b'=(z,j)\in \mathbb R^d\times\mathcal I$,
\[
H\bigl(\pi^{\rho,\nu}(\cdot\mid b)\mid \pi^{\rho,\nu}(\cdot\mid b')\bigr)
\le
\max\left\{ \frac{\Lambda}{2},\,4M_0 \right\}\omega(b,b')
=
\max\left\{ M_2+\frac{M_1^2}{2},\,4M_0 \right\}\omega(b,b').
\]

Now let $\tau\in\Pi(\mu,\nu)$ be arbitrary, and disintegrate it with respect
to its first marginal:
\[
\tau(dz_2,dz)=\mu(dz_2)\,\tau(dz\mid z_2).
\]
Define a coupling $\tilde\pi\in\Pi(\rho,\mu)$ by
\[
\tilde\pi(dz_1,dz_2)
:=
\mu(dz_2)
\int_{\mathbb R^d\times\mathcal I}
\pi^{\rho,\nu}(dz_1\mid z)\,\tau(dz\mid z_2).
\]
Its first marginal is $\rho$ and its second marginal is $\mu$.

For
$\eta\in\{\mu,\nu\}$,
\[
\pi^{\rho,\eta}(dz_1,dz_2)
=
\exp\bigl(
-c(z_1,z_2)-\varphi^\eta(z_1)-\psi^\eta(z_2)
\bigr)
\,\rho(dz_1)\eta(dz_2).
\]
Consequently,
\[
\frac{d\pi^{\rho,\mu}}{d\pi^{\rho,\nu}}(z_1,z_2)
=
\exp\bigl(f(z_1)+g(z_2)\bigr),
\]
where
\[
f(z_1)
:=
\varphi^\nu(z_1)-\varphi^\mu(z_1)
\]
and
\[
g(z_2)
:=
\psi^\nu(z_2)-\psi^\mu(z_2)
+
\log\frac{d\mu}{d\nu}(z_2).
\]
Therefore,
by the EOT structure theorem,
\cite[Theorem~4.2(b)]{nutz2022eot}, applied with reference measure
$\pi^{\rho,\nu}$, the measure $\pi^{\rho,\mu}$ is the unique minimizer of
\[
\pi\longmapsto H\bigl(\pi\mid\pi^{\rho,\nu}\bigr)
\qquad\text{over }\Pi(\rho,\mu).
\]
In particular,
\[
H\bigl(\pi^{\rho,\mu}\mid\pi^{\rho,\nu}\bigr)
\le
H\bigl(\tilde\pi\mid\pi^{\rho,\nu}\bigr).
\]

We now estimate the right-hand side. Since the second marginal of
$\tilde\pi$ is $\mu$ and the second marginal of $\pi^{\rho,\nu}$ is $\nu$,
the chain rule for relative entropy gives
\[
H\bigl(\tilde\pi\mid\pi^{\rho,\nu}\bigr)
=
H(\mu\mid\nu)
+
\int
H\bigl(
\tilde\pi(\cdot\mid z_2)
\mid
\pi^{\rho,\nu}(\cdot\mid z_2)
\bigr)
\,\mu(dz_2).
\]
By construction,
\[
\tilde\pi(\cdot\mid z_2)
=
\int
\pi^{\rho,\nu}(\cdot\mid z)\,
\tau(dz\mid z_2).
\]
Using the convexity of relative entropy in its first argument, we obtain
\[
H\bigl(
\tilde\pi(\cdot\mid z_2)
\mid
\pi^{\rho,\nu}(\cdot\mid z_2)
\bigr)
\le
\int
H\bigl(
\pi^{\rho,\nu}(\cdot\mid z)
\mid
\pi^{\rho,\nu}(\cdot\mid z_2)
\bigr)
\,\tau(dz\mid z_2).
\]
Hence,
\[
\begin{aligned}
H\bigl(\tilde\pi\mid\pi^{\rho,\nu}\bigr)
&\le
H(\mu\mid\nu) \\
&\quad+
\int\!\!\int
H\bigl(
\pi^{\rho,\nu}(\cdot\mid z)
\mid
\pi^{\rho,\nu}(\cdot\mid z_2)
\bigr)
\,\tau(dz\mid z_2)\,\mu(dz_2).
\end{aligned}
\]
Applying the previous conditional entropy bound yields
\[
\begin{aligned}
H\bigl(\tilde\pi\mid\pi^{\rho,\nu}\bigr)
&\le
H(\mu\mid\nu) \\
&\quad+
\max\left\{
M_2+\frac{M_1^2}{2},\,4M_0
\right\}
\int
\omega(z,z_2)\,\tau(dz\mid z_2)\,\mu(dz_2).
\end{aligned}
\]
That is,
\[
H\bigl(\tilde\pi\mid\pi^{\rho,\nu}\bigr)
\le
H(\mu\mid\nu)
+
\max\left\{
M_2+\frac{M_1^2}{2},\,4M_0
\right\}
\int
\omega(z,z_2)\,\tau(dz_2,dz).
\]
Since $\tau\in\Pi(\mu,\nu)$ was arbitrary, taking the infimum over all such
couplings gives
\[
H\bigl(\pi^{\rho,\mu}\mid\pi^{\rho,\nu}\bigr)
\le
H(\mu\mid\nu)
+
\max\left\{
M_2+\frac{M_1^2}{2},\,4M_0
\right\}
W_\omega^2(\mu,\nu).
\]

Finally, by optimality \eqref{eq:optimal_bridge_conditionals}, both
$Q^{\rho,\mu}$ and $Q^{\rho,\nu}$ have the same conditional bridge law
$P(\cdot\mid Z_0,Z_T)$ given the endpoints. Therefore, the chain rule for
relative entropy on path space yields
\[
\begin{aligned}
\mathcal H\bigl(Q^{\rho,\mu}\mid Q^{\rho,\nu}\bigr)
&=
H\bigl(\pi^{\rho,\mu}\mid\pi^{\rho,\nu}\bigr) \\
&\quad+
\int
\mathcal H\bigl(
P(\cdot\mid z_0,z_T)
\mid
P(\cdot\mid z_0,z_T)
\bigr)
\,\pi^{\rho,\mu}(dz_0,dz_T).
\end{aligned}
\]
The second term is zero. Thus,
\[
\mathcal H\bigl(Q^{\rho,\mu}\mid Q^{\rho,\nu}\bigr)
=
H\bigl(\pi^{\rho,\mu}\mid\pi^{\rho,\nu}\bigr).
\]
Combining this identity with the endpoint estimate above concludes the proof.
\end{proof}

In the previous theorem, we obtained a preliminary stability estimate in terms of the transportation cost \(W_\omega\). However, unlike the classical Euclidean setting, the multi-regime space \(\mathbb{R}^d \times \mathcal{I}\) does not come with a canonical metric, and therefore the associated Wasserstein-type distance is not entirely intrinsic. The next theorem shows that, under componentwise \(T_2\) assumptions on the conditional laws, this transportation cost can be controlled by two more natural quantities, the relative entropy and the total variation distance. This provides a more convenient form of the stability estimate for the subsequent convergence analysis.

\begin{lemma}
\label{T2_discret}
Let \(\mu \in \mathcal P(\mathbb R^d \times \mathcal I)\). Assume that
\(\mu\) satisfies the Talagrand \(T_2\) inequality
\eqref{eq:t2_regimewise} with constant \(C_\mu>0\). Then, for every
\(\nu\in\mathcal{P}(\mathbb{R}^d\times\mathcal{I})\) such that
\(H(\nu\mid\mu)<\infty\), one has
\[
\begin{aligned}
W_\omega^2(\mu,\nu)
&\leq
\lVert \bar{\mu}-\bar{\nu}\rVert_{\mathrm{TV}}
+
C_\mu
\sum_{i:\,\bar{\mu}(i)>0,\ \bar{\nu}(i)>0}
\bigl(\bar{\nu}(i)\wedge\bar{\mu}(i)\bigr)
H\bigl(\hat{\nu}_i\mid\hat{\mu}_i\bigr)
\\
&\leq
\lVert \bar{\mu}-\bar{\nu}\rVert_{\mathrm{TV}}
+
C_\mu H(\nu\mid\mu).
\end{aligned}
\]
\end{lemma}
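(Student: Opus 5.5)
We will bound $W_\omega^2(\mu,\nu)$ by exhibiting one explicit coupling $\tau\in\Pi(\mu,\nu)$, assembled regime by regime. Write $a_i:=\bar\mu(i)\wedge\bar\nu(i)$. In regime $i$ we keep mass $a_i$ inside the regime and couple it optimally in $W_2$, and we pair the leftover mass arbitrarily across regimes at unit cost. Since $H(\nu\mid\mu)<\infty$ gives $\nu\ll\mu$, we have $\bar\nu(i)>0\Rightarrow\bar\mu(i)>0$ and $\hat\nu_i\ll\hat\mu_i$. Hence $H(\hat\nu_i\mid\hat\mu_i)$ is well defined whenever $a_i>0$, and the Talagrand inequality \eqref{eq:t2_regimewise} applies to $\hat\nu_i$.

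\textbf{Construction.} For each $i$ with $a_i>0$, let $\gamma_i\in\Pi(\hat\mu_i,\hat\nu_i)$ be a $W_2$-optimal coupling on $\mathbb R^d$. Set
\[
\tau_{\mathrm{diag}}:=\sum_{i:\,a_i>0} a_i\,(\gamma_i\text{ placed on regime pair }(i,i)).
\]
Its first marginal is $\sum_i a_i\hat\mu_i\otimes\delta_i\le\mu$, and its second is $\sum_i a_i\hat\nu_i\otimes\delta_i\le\nu$. The residual measures $\mu^r:=\mu-\sum_i a_i\hat\mu_i\otimes\delta_i$ and $\nu^r:=\nu-\sum_i a_i\hat\nu_i\otimes\delta_i$ are nonnegative and have equal total mass
\[
m:=1-\sum_i a_i=\sum_i(\bar\mu(i)-\bar\nu(i))_+=\|\bar\mu-\bar\nu\|_{\mathrm{TV}},
\]
using the convention $\|\cdot\|_{\mathrm{TV}}=\tfrac12\sum|\cdot|$, consistent with the claimed inequality. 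If $m>0$ we add $\mu^r\otimes\nu^r/m$. Then $\tau\in\Pi(\mu,\nu)$.

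\textbf{Cost estimate.} Because $\omega\le1$ everywhere, the residual part costs at most $m=\|\bar\mu-\bar\nu\|_{\mathrm{TV}}$. This crude bound is fine even though $\mu^r$ and $\nu^r$ may share a regime, where $\omega=|x-y|^2$ could exceed $1$; to avoid that issue cleanly we note that $\mu^r$ and $\nu^r$ charge disjoint regimes. Indeed, in each regime $i$ at most one of $\bar\mu(i)-a_i$ and $\bar\nu(i)-a_i$ is positive, so the product measure lives on pairs with $i\neq j$, where $\omega=1$ exactly. The diagonal part costs $\sum_i a_iW_2^2(\hat\mu_i,\hat\nu_i)$. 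Applying \eqref{eq:t2_regimewise} with $\eta=\hat\nu_i$ bounds this by $C_\mu\sum_i a_iH(\hat\nu_i\mid\hat\mu_i)$, which is the first inequality.

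\textbf{Second inequality.} Apply the chain rule
\[
H(\nu\mid\mu)=H(\bar\nu\mid\bar\mu)+\sum_i\bar\nu(i)H(\hat\nu_i\mid\hat\mu_i),
\]
and use $H(\bar\nu\mid\bar\mu)\ge0$ together with $a_i\le\bar\nu(i)$. The only delicate point in the whole argument is verifying that the residual masses sit on disjoint regimes, so that the cross-regime part is charged exactly the unit cost; the rest is bookkeeping.
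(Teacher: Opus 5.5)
Your proposal is correct and follows the paper's proof essentially step for step. You keep mass $\bar\mu(i)\wedge\bar\nu(i)$ inside each regime with a $W_2$-optimal coupling, couple the residuals by the normalized product measure, which lives on distinct regimes and so costs exactly $\|\bar\mu-\bar\nu\|_{\mathrm{TV}}$, and finish with the regime-wise $T_2$ inequality and the entropy chain rule. One small fix: delete the claim ``$\omega\le 1$ everywhere,'' which is false since $\omega=|x-y|^2$ when $i=j$; the disjoint-regime observation you give right after is what the argument actually needs.
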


\begin{proof}
Recall that
\[
W_\omega^2(\mu,\nu)
=
\inf_{\pi\in\Pi(\mu,\nu)}
\int \omega(z,z')\,\pi(\mathrm{d}z,\mathrm{d}z').
\]
It is therefore enough to construct a coupling between \(\mu\) and \(\nu\) with the required cost. We split this coupling into two parts: first, we transport as much mass as possible without changing the regime index; then, we couple the remaining mass, which necessarily has to move between different regimes.

For each \(i\in\mathcal{I}\), set \(m_i:=\bar{\mu}(i)\wedge\bar{\nu}(i)\). This is the largest amount of mass that can be coupled while remaining in regime \(i\).
For every \(i\in\mathcal{I}\) such that \(m_i>0\), let
\[
\pi_i\in\Pi(\hat{\mu}_i,\hat{\nu}_i)
\]
be an optimal coupling for \(W_2^2(\hat{\mu}_i,\hat{\nu}_i)\). Thus, \(\pi_i\) is initially a probability measure on \(\mathbb{R}^d\times\mathbb{R}^d\). Via the embedding
\[
(x,y)\longmapsto \bigl((x,i),(y,i)\bigr),
\]
we identify \(\pi_i\) with a measure on \((\mathbb{R}^d\times\mathcal{I})^2\), supported on
$
(\mathbb{R}^d\times\{i\})
\times
(\mathbb{R}^d\times\{i\}).
$
Its first and second marginals are respectively \(\hat{\mu}_i\otimes\delta_i\) and \(\hat{\nu}_i\otimes\delta_i\), and
\[
\int \omega(z,z')\,\pi_i(\mathrm{d}z,\mathrm{d}z')
=
W_2^2(\hat{\mu}_i,\hat{\nu}_i).
\]
We define the part of the coupling which preserves the regime index by
\[
\pi^{\mathrm{same}}
:=
\sum_{i:\,m_i>0}m_i\pi_i.
\]
Its marginals are
\[
\mu^{\mathrm{same}}
:=
\sum_{i:\,m_i>0}
m_i\,\hat{\mu}_i\otimes\delta_i,
\qquad
\nu^{\mathrm{same}}
:=
\sum_{i:\,m_i>0}
m_i\,\hat{\nu}_i\otimes\delta_i.
\]

It remains to couple the residual measures
\[
\mu^{\mathrm{rem}}
:=
\mu-\mu^{\mathrm{same}},
\qquad
\nu^{\mathrm{rem}}
:=
\nu-\nu^{\mathrm{same}}.
\]
Their common total mass is
\[
\Delta
:=
1-\sum_{i\in\mathcal{I}}m_i
=
\lVert \bar{\mu}-\bar{\nu}\rVert_{\mathrm{TV}}.
\]
Moreover, for every \(i\in\mathcal{I}\), the residual masses in regime \(i\) are \(\bar{\mu}(i)-m_i\) and \(\bar{\nu}(i)-m_i\), respectively. Since \(m_i=\bar{\mu}(i)\wedge\bar{\nu}(i)\), at least one of these two quantities is zero. Hence, the two residual measures never charge the same regime, so any coupling of them is supported on pairs with distinct regime indices.
If \(\Delta>0\), define
\[
\pi^{\mathrm{rem}}
:=
\frac{1}{\Delta}\,
\mu^{\mathrm{rem}}\otimes\nu^{\mathrm{rem}};
\]
otherwise, set \(\pi^{\mathrm{rem}}:=0\). Then \(\pi^{\mathrm{rem}}\) is a coupling of \(\mu^{\mathrm{rem}}\) and \(\nu^{\mathrm{rem}}\), supported on pairs with distinct regime indices. Therefore,
\[
\int \omega(z,z')\,\pi^{\mathrm{rem}}(\mathrm{d}z,\mathrm{d}z')
=
\Delta.
\]

Finally, let \(\pi:=\pi^{\mathrm{same}}+\pi^{\mathrm{rem}}\). By construction, \(\pi\in\Pi(\mu,\nu)\). Hence,
\[
\begin{aligned}
W_\omega^2(\mu,\nu)
&\leq
\int \omega(z,z')\,\pi(\mathrm{d}z,\mathrm{d}z')
\\
&=
\sum_{i:\,m_i>0}
m_i W_2^2(\hat{\mu}_i,\hat{\nu}_i)
+
\lVert \bar{\mu}-\bar{\nu}\rVert_{\mathrm{TV}}
\\
&\leq
\lVert \bar{\mu}-\bar{\nu}\rVert_{\mathrm{TV}}
+
C_\mu
\sum_{i:\,m_i>0}
m_i H(\hat{\nu}_i\mid\hat{\mu}_i).
\end{aligned}
\]
Since \(H(\nu\mid\mu)<\infty\), one has \(\bar{\nu}\ll\bar{\mu}\). The entropy chain rule gives
\[
H(\nu\mid\mu)
=
H(\bar{\nu}\mid\bar{\mu})
+
\sum_{i:\,\bar{\nu}(i)>0}
\bar{\nu}(i)H(\hat{\nu}_i\mid\hat{\mu}_i).
\]
Since \(m_i\leq\bar{\nu}(i)\) and relative entropy is nonnegative,
\[
\sum_{i:\,m_i>0}
m_i H(\hat{\nu}_i\mid\hat{\mu}_i)
\leq
H(\nu\mid\mu).
\]
Combining the preceding estimates concludes the proof.
\end{proof}

\begin{proof}[Proof of Theorem~\ref{thm:stable}]
By Lemma~\ref{thm:stability_first}, with
$B=\max\left\{M_2+\frac{M_1^2}{2},\,4M_0\right\}$
we have
\[
\mathcal H(Q^{\rho,\mu}\mid Q^{\rho,\nu})
=
H(\pi^{\rho,\mu}\mid \pi^{\rho,\nu})
\le
H(\mu\mid \nu)+B W_\omega^2(\mu,\nu).
\]
Moreover, Lemma~\ref{T2_discret} yields
\[
W_\omega^2(\mu,\nu)
\leq
C_\mu H(\nu\mid\mu)
+
\lVert\bar{\mu}-\bar{\nu}\rVert_{\mathrm{TV}}.
\]
Combining the two estimates yields
\[
\mathcal H(Q^{\rho,\mu}\mid Q^{\rho,\nu})
=
H(\pi^{\rho,\mu}\mid \pi^{\rho,\nu})
\le
H(\mu\mid \nu)
+
BC_\mu H(\nu\mid \mu)
+
B\|\bar\mu-\bar\nu\|_{\mathrm{TV}}.
\]
Finally, since \(\|\bar\mu-\bar\nu\|_{\mathrm{TV}}\le \|\mu-\nu\|_{\mathrm{TV}}\), the last bound in the statement follows.
\end{proof}

For later convenience, we recall the following result of
\cite[Theorem~1.2]{eckstein2025hilbert}, using notation adapted to our setting.
It provides the exponential convergence of the Sinkhorn iterates in total
variation. Its proof relies on the contraction properties of the Sinkhorn maps
in Hilbert's projective metric.

\begin{theorem}[Exponential convergence of Sinkhorn iterates in total variation, \cite{eckstein2025hilbert}]
\label{tv_cv}
Let $(E,d_E)$ be a Polish space, and let $\rho,\mu\in\mathcal P(E)$.
Consider the entropic optimal transport problem on $E\times E$ with cost
function $c:E\times E\to\mathbb R$. Assume that there exist $p>0$ and
$\delta>0$ such that
\[
\lim_{r\to\infty}
\frac{\sup_{x,y\in B_r}c(x,y)}{r^p}
=0
\]
and
\[
\lim_{r\to\infty}
\frac{
\max\{
\rho(E\setminus B_r),\,
\mu(E\setminus B_r)
\}
}{
\exp(-r^{p+\delta})
}
=0,
\]
where $B_r$ denotes the ball of radius $r$ in $E$ with respect to $d_E$.
Let $(\pi^{n,n},\pi^{n+1,n})_{n\in\mathbb N}$ be the sequence of couplings
generated by Sinkhorn algorithm. Then there exist an optimal coupling
$\pi^{\rho,\mu}$, a constant $A>0$, and a constant $\kappa\in(0,1)$ such that, for all
$n\in\mathbb N$,
\[
\|\pi^{n,n}-\pi^{\rho,\mu}\|_{TV}\le A\kappa^n,
\qquad
\|\pi^{n+1,n}-\pi^{\rho,\mu}\|_{TV}\le A\kappa^n.
\]
\end{theorem}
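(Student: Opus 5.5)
The statement is an external result and the paper only quotes it, but I would reprove it with Birkhoff's contraction theorem for positive linear maps in Hilbert's projective metric. Define
\[
d_H(u,v):=\log\sup\frac{u}{v}-\log\inf\frac{u}{v}
\]
on positive functions. Write the two Sinkhorn half-steps as
\[
f^{n+1}=1/(K g^n),\qquad g^{n+1}=1/(K^\top f^{n+1}),
\]
where $Kg(z)=\int e^{-c(z,w)}g(w)\,\mu(dw)$ and $K^\top$ is the analogous operator against $\rho$. Inversion $u\mapsto 1/u$ is an isometry for $d_H$. Birkhoff's theorem gives $d_H(Ku,Kv)\le\tanh(\Delta/4)\,d_H(u,v)$, where $\Delta$ is the projective diameter of the image cone of $K$, and here $\Delta\le 2\,\mathrm{osc}(c)$.

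\textbf{Bounded cost.} When $c$ is bounded (the situation of Assumptions~\ref{assum:measures}--\ref{assum:map_c2}, by compactness), the full map $g^n\mapsto g^{n+1}$ is a strict contraction with factor $\kappa_0=\tanh(\|c\|_{\mathrm{osc}}/2)^2<1$. The fixed point of this map is the Schr\"odinger potential $g$ of Theorem~\ref{thm:classic}, hence $d_H(g^n,g)\le\kappa_0^n d_H(g^0,g)$. To pass to total variation, normalise so that $\inf(g^n/g)\le 1\le\sup(g^n/g)$. Then $|g^n/g-1|\le e^{d_H}-1$, and the same holds for $f^{n+1}/f$ since $d_H(f^{n+1},f)\le d_H(g^n,g)$. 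The densities of $\pi^{n+1,n}$ and $\pi^{n,n}$ with respect to $\pi^{\rho,\mu}$ are the products $(f^{n+1}/f)(g^n/g)$, which differ from $1$ uniformly by $O(\kappa_0^n)$. This yields the TV bound with $\kappa=\kappa_0$.

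\textbf{Unbounded cost.} This is the main obstacle, because $d_H(g^0,g)$ may be infinite and $\Delta$ is no longer finite. I would localise to balls $B_r$. On $B_r\times B_r$ the oscillation of $c$ is at most $2\sup_{B_r\times B_r}|c|=o(r^p)$, so restricted to $B_r$ one step contracts by roughly $1-2e^{-o(r^p)}$. The mass outside $B_r$ only perturbs $Kg$ restricted to $B_r$ by a relative error of order $e^{o(r^p)}\mu(E\setminus B_r)=o(e^{-r^{p+\delta}/2})$. The plan is to set up a perturbed contraction inequality of the form
\[
d_{H,B_r}(g^{n+1},g)\le(1-e^{-o(r^p)})\,d_{H,B_r}(g^n,g)+e^{-r^{p+\delta}/2},
\]
using a truncated Hilbert metric on $B_r$, or alternatively Eckstein--Nutz-style bounds comparing the iterates with the restricted Sinkhorn problem.

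I would first try a single fixed large $r$. This gives geometric decay of the truncated distance down to the tail floor $e^{-r^{p+\delta}/2}$. Closing the argument into a clean $A\kappa^n$ for \emph{all} $n$ requires $r$ to grow only logarithmically in $n$, so that the contraction factor stays uniformly below $1$ while the tail floor keeps decreasing geometrically. The super-exponential tail rate $r^{p+\delta}$ beating the cost growth $r^p$ is exactly what should make this balance possible, and I expect it to be the delicate step.

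Finally, the TV distance between the couplings is the truncated-density contribution on $B_r\times B_r$, bounded as in the bounded case, plus the tail masses of $\rho$ and $\mu$ outside $B_r$, which are $O(e^{-r^{p+\delta}})$. Together these give the stated bounds for both $\pi^{n,n}$ and $\pi^{n+1,n}$.
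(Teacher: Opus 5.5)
Your bounded-cost argument is sound. It is the classical Birkhoff (Franklin--Lorenz) contraction, and the passage to total variation through the normalisation $\inf(g^n/g)\le 1\le \sup(g^n/g)$ is correct. Two small repairs are needed: for $\pi^{n,n}$ you should normalise $f^n$ instead, and since $d_H(g^0,g)$ may be infinite, the recursion should start from $d_H(f^1,f)\le\Delta$. The paper does not prove this theorem; it imports it from \cite{eckstein2025hilbert} and only applies it to compactly supported marginals with $|c|\le M_0$ on the supports. So this part of your proposal already covers every use the paper makes of the result. The statement itself, however, allows unbounded costs. There your plan has a genuine gap, and the localisation you describe cannot close it.

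\textbf{The unbounded case.} On $B_r$, the best contraction Birkhoff's theorem gives per full step is $\theta_r=\tanh^2(\Delta_r/4)$, where $\Delta_r$ is the projective diameter of the localised kernel. For a genuinely unbounded cost $\Delta_r\to\infty$ (for $c(x,y)=|x-y|^2$ it grows like $r^2$), so $\theta_r\to 1$. Your recursion carries an \emph{additive} error $e^{-r^{p+\delta}/2}$.
\begin{itemize}
\item With $r$ fixed, you only get geometric decay down to this floor.
\item Making the floor at step $n$ smaller than $A\kappa^n$ forces $r_n^{p+\delta}\gtrsim n$. That is polynomial, not logarithmic, growth of $r_n$.
\item Then $\theta_{r_n}\to 1$, so $\prod_{k\le n}\theta_{r_k}=e^{-o(n)}$, and the contraction part no longer decays geometrically. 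Optimising $r=r(n)$ yields at best a sub-exponential rate.
\item The recursion also cannot be chained across growing radii: $d_{H,B_{r'}}\ge d_{H,B_r}$ for $r'>r$, and the newly added annulus is uncontrolled.
\end{itemize}
In your scheme, ``contraction factor uniformly below $1$'' and ``tail floor decreasing geometrically'' are incompatible requirements.

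\textbf{The missing idea.} You need an error term proportional to the current distance rather than additive. That requires controlling $g^n/g$ in the tails through growth weights. This is the idea of \cite{eckstein2025hilbert}. One replaces the cone of nonnegative functions by a relaxed cone of integrable functions of bounded growth: functions whose integrals against suitable test functions are nonnegative. One then shows that the kernel operator maps this cone into a subcone of finite projective diameter, even though $e^{-c}$ is not bounded away from zero. The tails $e^{-r^{p+\delta}}$ compensate for the kernel decay $e^{-o(r^p)}$. Birkhoff's theorem then applies globally with a single factor $\kappa<1$, which your localisation cannot produce.
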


\begin{corollary}
In our setting,
$
E=\mathbb R^d\times \mathcal I,
$
the cost function \(c\) is defined on
\((\mathbb R^d\times \mathcal I)^2\) and marginals \(\rho\) and \(\mu\) have
compact support.
Since only the restriction of \(c\) to
\(\operatorname{supp}(\rho)\times\operatorname{supp}(\mu)\) is relevant for
the EOT problem, one may, without loss of generality, assume that \(c\)
vanishes outside a compact subset of \((\mathbb R^d\times \mathcal I)^2\).
Then the assumptions of Theorem~\ref{tv_cv} are satisfied.
\end{corollary}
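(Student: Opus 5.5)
We verify the hypotheses of Theorem~\ref{tv_cv} for a suitably modified but equivalent problem, and then apply that theorem.

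\textbf{Metric on $E$.} Equip $E=\mathbb R^d\times\mathcal I$ with
\[
d_E\bigl((x,i),(y,j)\bigr):=|x-y|+\mathbf 1_{\{i\neq j\}}.
\]
This is a metric, since it is the sum of the Euclidean metric and the discrete metric. Because $\mathcal I$ is finite, $(E,d_E)$ is a finite disjoint union of copies of $\mathbb R^d$, hence Polish.

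\textbf{Modifying the cost.} By Assumption~\ref{assum:measures}(2) there is $R_0$ with $\operatorname{supp}(\rho)\cup\operatorname{supp}(\mu)\subset B_{R_0}$. Let $K:=\operatorname{supp}(\rho)\times\operatorname{supp}(\mu)$. Define $\tilde c:=c$ on $K$ and $\tilde c:=0$ off $K$.

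We claim the problem with $\tilde c$ is equivalent to the original one. Every $\pi\in\Pi(\rho,\mu)$ is concentrated on $K$, so the objective $H(\pi\mid\rho\otimes\mu)+\int c\,d\pi$ is unchanged when $c$ is replaced by $\tilde c$. The Sinkhorn updates \eqref{eq:sinkhorn_update} integrate only against $\rho$ and $\mu$, so on the supports the iterates $\varphi^n,\psi^n$ are unchanged as well. Consequently the couplings $\pi^{n,n}$, $\pi^{n+1,n}$ and the optimizer $\pi^{\rho,\mu}$ coincide for $c$ and $\tilde c$. We may therefore take $\tilde c$ as the cost without loss of generality.

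\textbf{Checking the two growth conditions.} By Assumption~\ref{assum:map_c2}, $c_{ij}$ is continuous and strictly positive on each compact set $\operatorname{supp}(\hat\rho_i)\times\operatorname{supp}(\hat\mu_j)$, so $|\tilde c|\le M_0<\infty$ everywhere. Fix any $p>0$. Then
\[
\frac{\sup_{B_r\times B_r}\tilde c}{r^p}\le \frac{M_0}{r^p}\longrightarrow 0 \quad (r\to\infty).
\]
For $r>R_0$ we have $\rho(E\setminus B_r)=\mu(E\setminus B_r)=0$. Hence for any $\delta>0$ the ratio with $\exp(-r^{p+\delta})$ vanishes identically for large $r$, and its limit is $0$.

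\textbf{Conclusion.} Theorem~\ref{tv_cv} now applies and yields the exponential total variation convergence. Its optimal coupling is the unique minimizer $\pi^{\rho,\mu}$ given by Theorem~\ref{thm:classic}.

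The only point needing care is the cost modification. The hypothesis of Theorem~\ref{tv_cv} uses the supremum of $c$ over all of $B_r\times B_r$, including points off $K$. There $-\log r$ could be unbounded, or $r$ could vanish so that $c=+\infty$. This is exactly why we replace $c$ by the bounded $\tilde c$ and must check that the replacement changes neither the optimizer nor any Sinkhorn iterate. Measurability is not an issue: $K$ is closed and $c$ is continuous on it, so $\tilde c$ is Borel.
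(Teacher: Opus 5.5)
Your proposal is correct and takes the same route as the paper. The paper only observes that the values of $c$ off the compact set $\operatorname{supp}(\rho)\times\operatorname{supp}(\mu)$ are irrelevant, so $c$ may be taken bounded and vanishing outside a compact set, which makes both growth conditions of Theorem~\ref{tv_cv} trivial; you additionally make explicit the Polish metric on $E$ and check that the optimizer and all Sinkhorn couplings are unchanged by the replacement. One small slip: Assumption~\ref{assum:map_c2} gives strict positivity of $r_{ij}$, not of $c_{ij}$, and what you actually need is only that $c_{ij}=-\log r_{ij}$ is finite and continuous, hence bounded by $M_0$ on the compact supports.
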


\begin{corollary}
\label{coro:tv}
The marginals defined in \eqref{eq:mu_definitions} converge exponentially
fast in total variation. More precisely, for all \(n\in\mathbb N\),
\[
\|\rho^{n,n}-\rho\|_{TV}\le A\kappa^n,
\qquad
\|\mu^{n+1,n}-\mu\|_{TV}\le A\kappa^n.
\]
\end{corollary}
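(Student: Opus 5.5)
The plan is to obtain Corollary~\ref{coro:tv} directly from Theorem~\ref{tv_cv}, by projecting the coupling estimates onto their marginals. The key fact is that taking a marginal is a pushforward under a measurable map, and pushforwards are contractions for total variation. Concretely, for any finite signed measure $\sigma$ on $E\times E$ and the projection $\mathrm{pr}_k$, $k=1,2$, one has
\[
\|(\mathrm{pr}_k)_\#\sigma\|_{TV}\le\|\sigma\|_{TV}.
\]
This holds because $\sup_A|\sigma(\mathrm{pr}_k^{-1}(A))|$ is a supremum over a subfamily of all measurable sets, and the same inequality holds for the variation-norm convention.

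For the first bound, apply this with $\sigma=\pi^{n,n}-\pi^{\rho,\mu}$ and $k=1$. By \eqref{eq:mu_definitions}, the first marginal of $\pi^{n,n}$ is $\rho^{n,n}$. Since $\pi^{\rho,\mu}\in\Pi(\rho,\mu)$, its first marginal is $\rho$. Hence
\[
\|\rho^{n,n}-\rho\|_{TV}\le\|\pi^{n,n}-\pi^{\rho,\mu}\|_{TV}\le A\kappa^n.
\]
For the second bound, apply the same argument with $k=2$ and $\sigma=\pi^{n+1,n}-\pi^{\rho,\mu}$. The second marginal of $\pi^{n+1,n}$ is $\mu^{n+1,n}$ and that of $\pi^{\rho,\mu}$ is $\mu$, so $\|\mu^{n+1,n}-\mu\|_{TV}\le A\kappa^n$.

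The one point to check is that the optimal coupling produced by Theorem~\ref{tv_cv} is the $\pi^{\rho,\mu}$ of Theorem~\ref{thm:classic}, so that its marginals really are $\rho$ and $\mu$. This follows from uniqueness of the EOT optimizer under Assumption~\ref{assum:reference_marginals}, together with the fact, from the preceding corollary, that the hypotheses of Theorem~\ref{tv_cv} hold in our compactly supported setting. Even without invoking uniqueness, any optimal coupling lies in $\Pi(\rho,\mu)$, which is all the argument uses. The constants $A$ and $\kappa$ are the same as in Theorem~\ref{tv_cv}. I do not expect any real obstacle beyond this bookkeeping.
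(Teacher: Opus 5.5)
Your proposal is correct and follows the paper's approach. The paper states this corollary without a written proof, as an immediate consequence of Theorem~\ref{tv_cv}, and the step it leaves implicit is the one you supply: projecting onto a marginal is a contraction in total variation. Combined with the identification of the marginals of $\pi^{n,n}$, $\pi^{n+1,n}$ via \eqref{eq:mu_definitions} and $\pi^{\rho,\mu}\in\Pi(\rho,\mu)$, this gives both bounds (read the first one for $n\ge 1$, since the iteration does not define $\pi^{0,0}$).
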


To handle the recursion \eqref{eq:an_bound} obtained in the proof of Theorem~\ref{thm:exp}, we first record the following elementary estimate.
\begin{lemma}[Elementary recursion estimate]
\label{lem:elementary_recursion_estimate}
Let $(a_n)_{n\geq 0}$ be a sequence of nonnegative real numbers. Assume that there exist constants
$A,\alpha\in(0,1)$ and $D\geq 0$ such that, for every $n\geq 1$,
\[
a_n \leq A a_{n-1} + D\alpha^n .
\]
Then there exist constants $C>0$ and $\theta\in(0,1)$ such that
\[
a_n \leq C\theta^n,
\qquad n\geq 0.
\]
More precisely, one may take $\theta=A$ if $\alpha<A$, $\theta=\alpha$ if $\alpha>A$, and any
$\theta\in(A,1)$ if $\alpha=A$.
\end{lemma}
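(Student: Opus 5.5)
The plan is to unroll the recursion by induction and compare it with a geometric series. Iterating $a_n \le A a_{n-1} + D\alpha^n$ gives, for every $n\ge 1$,
\[
a_n \le A^n a_0 + D\sum_{k=1}^{n} A^{n-k}\alpha^k .
\]
This holds for $n=1$ directly. For the inductive step, multiply the bound for $a_{n-1}$ by $A$ (which is nonnegative) and add $D\alpha^n$; this reproduces the bound for $a_n$. The rest of the argument bounds the sum $S_n:=\sum_{k=1}^n A^{n-k}\alpha^k$ in three cases. I implicitly assume $\alpha>0$; if $\alpha=0$ the sum vanishes and $\theta=A$ works.

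\emph{Case $\alpha<A$.} Factor out $A^n$ to get
\[
S_n = A^n\sum_{k=1}^n (\alpha/A)^k \le A^n\,\frac{\alpha/A}{1-\alpha/A} = A^n\,\frac{\alpha}{A-\alpha}.
\]
Hence $a_n \le \bigl(a_0 + D\alpha/(A-\alpha)\bigr)A^n$, which gives the claim with $\theta=A$.

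\emph{Case $\alpha>A$.} Factor out $\alpha^n$ instead:
\[
S_n=\alpha^n\sum_{k=1}^n (A/\alpha)^{n-k}\le \alpha^n\,\frac{1}{1-A/\alpha}=\alpha^n\,\frac{\alpha}{\alpha-A}.
\]
Since $A^n\le\alpha^n$, we get $a_n\le \bigl(a_0 + D\alpha/(\alpha-A)\bigr)\alpha^n$, so $\theta=\alpha$ works. Note that this case uses $\alpha<1$ to ensure $\theta\in(0,1)$; the statement's ``$A,\alpha\in(0,1)$'' is read as including this.

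\emph{Case $\alpha=A$.} Here $S_n=nA^n$ and $a_n\le (a_0+Dn)A^n$. For any $\theta\in(A,1)$, the quantity $\sup_{n\ge0} n(A/\theta)^n$ is finite, since $n r^n\to0$ for $r<1$. So $a_n\le \bigl(a_0+D\sup_n n(A/\theta)^n\bigr)\theta^n$.

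In every case we set $C$ equal to the displayed prefactor, enlarged if needed so that it is strictly positive and at least $a_0$; this covers $n=0$. The resulting $C$ depends only on $a_0,D,A,\alpha$ and the chosen $\theta$. There is no real obstacle here. The only mildly delicate point is the equal-rate case, where the polynomial factor $n$ is absorbed by enlarging $\theta$ slightly.
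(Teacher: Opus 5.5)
Your proof is correct and follows essentially the same route as the paper. The paper divides the recursion by $A^n$ and telescopes, which is equivalent to your unrolled bound $a_n \le A^n a_0 + D\sum_{k=1}^{n} A^{n-k}\alpha^k$, and then applies the same three-case geometric-series estimate; your explicit handling of $n=0$ and of $C>0$ (needed, e.g., when $a_0=D=0$) is a small detail the paper leaves implicit.
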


\begin{proof}
Dividing the recursion by $A^n$, we obtain
\[
\frac{a_n}{A^n}
\leq
\frac{a_{n-1}}{A^{n-1}}
+
D\left(\frac{\alpha}{A}\right)^n .
\]
Hence
\[
\frac{a_n}{A^n}
\leq
a_0
+
D\sum_{k=1}^n
\left(\frac{\alpha}{A}\right)^k
=
\begin{cases}
\displaystyle
a_0
+
D\frac{\alpha}{A}
\frac{1-\left(\frac{\alpha}{A}\right)^n}{1-\frac{\alpha}{A}},
&
\alpha<A,
\\[2ex]
\displaystyle
a_0+Dn,
&
\alpha=A,
\\[2ex]
\displaystyle
a_0
+
D\frac{\alpha}{A}
\frac{\left(\frac{\alpha}{A}\right)^n-1}{\frac{\alpha}{A}-1},
&
\alpha>A.
\end{cases}
\]
Consequently,
\[
a_n
\leq
\begin{cases}
\displaystyle
\left(
a_0
+
D\frac{\frac{\alpha}{A}}{1-\frac{\alpha}{A}}
\right)A^n
\leq
CA^n,
&
\alpha<A,
\\[2ex]
\displaystyle
(a_0+Dn)A^n
\leq
C(A+\varepsilon)^n,
&
\alpha=A,\quad 0<\varepsilon<1-A,
\\[2ex]
\displaystyle
a_0A^n
+
D\frac{\alpha(\alpha^n-A^n)}{\alpha-A}
\leq
C\alpha^n,
&
\alpha>A.
\end{cases}
\]
Thus $a_n$ decays exponentially in all cases.
\end{proof}

\begin{proof}[Proof of Theorem~\ref{thm:exp}]
Set
\[
a_n:=H(\pi^{\rho,\mu}\mid \pi^{n+1,n}).
\]
By the entropy identities for the Sinkhorn projections, see \cite[Proposition 6.5]{nutz2022eot},
\[
\begin{aligned}
a_n-a_{n-1}
&=H(\pi^{\rho,\mu}\mid \pi^{n+1,n})
  -H(\pi^{\rho,\mu}\mid \pi^{n,n-1})\\
&=-H(\rho\mid \rho^{n,n})
  -H(\mu\mid \mu^{n,n-1}).
\end{aligned}
\]
Equivalently,
\[
a_{n-1}-a_n
=
H(\rho\mid \rho^{n,n})
+
H(\mu\mid \mu^{n,n-1}).
\]
Since \(\pi^{n+1,n}=\pi^{\rho,\mu^{n+1,n}}\), applying
Theorem~\ref{thm:stable} with \(\nu=\mu^{n+1,n}\), we obtain
\[
a_n=H(\pi^{\rho,\mu}\mid \pi^{\rho,\mu^{n+1,n}})\le H(\mu\mid \mu^{n+1,n})+BC_\mu H(\mu^{n+1,n}\mid \mu)+B\|\mu^{n+1,n}-\mu\|_{\mathrm{TV}}.
\]
where \(C_\mu\) is the Talagrand constant in \eqref{eq:t2_regimewise} for \(\mu\).
By Sinkhorn monotonicity inequalities, see \cite[Proposition 6.10]{nutz2022eot},
\[
\begin{aligned}
H(\mu\mid \mu^{n+1,n})
&\le H(\mu\mid \mu^{n,n-1}),\\
H(\mu^{n+1,n}\mid \mu)
&\le H(\rho\mid \rho^{n,n}).
\end{aligned}
\]
Moreover, by Corollary~\ref{coro:tv}, there exist constants \(A_{\mathrm{TV}}>0\) and \(\kappa\in(0,1)\) such that
\[
\|\mu^{n+1,n}-\mu\|_{\mathrm{TV}}
\le A_{\mathrm{TV}}\kappa^n .
\]
Therefore, with
\[
C_{\mathrm{st}}:=\max\{1,BC_\mu\},
\]
we obtain
\[
\begin{aligned}
a_n
&\le
C_{\mathrm{st}}
\left(
H(\rho\mid \rho^{n,n})
+
H(\mu\mid \mu^{n,n-1})
\right)
+
BA_{\mathrm{TV}}\kappa^n\\
&=
C_{\mathrm{st}}(a_{n-1}-a_n)
+
BA_{\mathrm{TV}}\kappa^n .
\end{aligned}
\]
Hence
\begin{equation}\label{eq:an_bound}
a_n
\le
\frac{C_{\mathrm{st}}}{1+C_{\mathrm{st}}}a_{n-1}
+
\frac{BA_{\mathrm{TV}}}{1+C_{\mathrm{st}}}\kappa^n .
\end{equation}
Applying Lemma~\ref{lem:elementary_recursion_estimate} to
\eqref{eq:an_bound}, with
\(A=\frac{C_{\mathrm{st}}}{1+C_{\mathrm{st}}}\),
\(D=\frac{BA_{\mathrm{TV}}}{1+C_{\mathrm{st}}}\), and
\(\alpha=\kappa\), yields constants \(C>0\) and
\(\theta\in(0,1)\) such that
\[
H\bigl(\pi^{\rho,\mu} \mid \pi^{n+1,n}\bigr)
=
a_n
\leq
C\theta^n.
\]
Finally, by the entropy monotonicity along the second Sinkhorn projection,
see~\cite[Proposition~6.5]{nutz2022eot},
\[
H\bigl(\pi^{\rho,\mu} \mid \pi^{n+1,n+1}\bigr)
\leq
H\bigl(\pi^{\rho,\mu} \mid \pi^{n+1,n}\bigr).
\]
Therefore,
\[
H\bigl(\pi^{\rho,\mu} \mid \pi^{n+1,n+1}\bigr)
\leq
C\theta^n.
\]
\end{proof}

\section{Proofs of Theorems~\ref{thm:partial_observation}, ~\ref{thm:stability_partial} and~\ref{thm:po_exp_conv}}\label{sec:proof-partial}

The partially observed Schrödinger bridge problem in \eqref{eq:partial_schrodinger_bridge} is obtained by projecting the terminal endpoint \((X_T,I_T)\) onto its continuous component \(X_T\). The following lemma shows that this can be done in a general framework and under mild assumptions.

\begin{lemma}[EOT with partial observations]
\label{lem:partial_obeserve}
Let $X, Y, X', Y'$ be Borel spaces, and let $F : X \to X'$ and $G : Y \to Y'$ be measurable maps. Define
\[
T := F \otimes G : X \times Y \to X' \times Y', \qquad T(x,y) = (F(x),G(y)).
\]
Let $R \in \mathcal P(X \times Y)$ and $R' := T_{\#}R \in \mathcal P(X' \times Y')$. Given $\mu \in \mathcal P(X')$ and $\nu \in \mathcal P(Y')$, consider
\[
\inf \bigl\{ H(\pi \mid R) : \pi \in \mathcal P(X \times Y),\ T_{\#}\pi \in \Pi(\mu,\nu) \bigr\}
\qquad \text{and} \qquad
\inf_{\gamma \in \Pi(\mu,\nu)} H(\gamma \mid R').
\]

Assume that the right-hand problem admits a minimizer $\gamma^{*} \in \Pi(\mu,\nu)$ and that there exist measurable functions
$f : X' \to \mathbb R \cup \{-\infty\}$ and $g : Y' \to \mathbb R \cup \{-\infty\}$ such that
\[
\frac{\mathrm d \gamma^{*}}{\mathrm d R'}(x',y') = \exp\bigl(f(x') + g(y')\bigr).
\]

Then the left-hand problem admits a minimizer $\pi^{*} \in \mathcal P(X \times Y)$ satisfying
$T_{\#}\pi^{*} = \gamma^{*}$ and
$
H(\pi^{*} \mid R) = H(\gamma^{*} \mid R').
$
Moreover, $\pi^{*}$ admits the representation
\[
\frac{\mathrm d \pi^{*}}{\mathrm d R}(x,y) = \exp\bigl(f(F(x)) + g(G(y))\bigr).
\]
\end{lemma}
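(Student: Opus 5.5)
We construct the minimizer explicitly by lifting $\gamma^*$ with the conditional laws of $R$ given $T$. Disintegrate $R$ with respect to the map $T$: there is a kernel $(x',y')\mapsto R(\cdot\mid T=(x',y'))$ on $X\times Y$, concentrated on $T^{-1}(x',y')$ for $R'$-a.e. $(x',y')$, such that $R(\mathrm d(x,y))=\int R(\mathrm d(x,y)\mid T=t)\,R'(\mathrm dt)$. This exists because $X\times Y$ is a Borel space. Then set
\[
\pi^*(\mathrm d(x,y)):=\int_{X'\times Y'} R(\mathrm d(x,y)\mid T=t)\,\gamma^*(\mathrm dt).
\]
By construction $T_\#\pi^*=\gamma^*\in\Pi(\mu,\nu)$, so $\pi^*$ is admissible. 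Since $\gamma^*\ll R'$ with density $h(t)=\exp(f(x')+g(y'))$, we get $\pi^*\ll R$ with $\frac{\mathrm d\pi^*}{\mathrm dR}=h\circ T$. This holds because for any bounded measurable $\Phi$,
\[
\int\Phi\,\mathrm d\pi^*=\int\!\!\int \Phi\,\mathrm dR(\cdot\mid t)\,h(t)\,R'(\mathrm dt)=\int \Phi\,(h\circ T)\,\mathrm dR,
\]
using that $h\circ T$ is constant on the fibre $T^{-1}(t)$. This gives the representation $\exp(f(F(x))+g(G(y)))$. It also gives $H(\pi^*\mid R)=\int \log h(T)\,\mathrm d\pi^*=\int\log h\,\mathrm d\gamma^*=H(\gamma^*\mid R')$.

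Next we show optimality. For any admissible $\pi$ with $H(\pi\mid R)<\infty$, the data processing inequality (equivalently, the chain rule for relative entropy under the map $T$) gives
\[
H(\pi\mid R)=H(T_\#\pi\mid R')+\int H\bigl(\pi(\cdot\mid T=t)\mid R(\cdot\mid T=t)\bigr)\,T_\#\pi(\mathrm dt)\ \ge\ H(T_\#\pi\mid R').
\]
Since $T_\#\pi\in\Pi(\mu,\nu)$, the right-hand side is at least $H(\gamma^*\mid R')$, which equals $H(\pi^*\mid R)$. Hence $\pi^*$ is a minimizer and the two infima coincide.

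The main delicate point is the possible value $-\infty$ of $f,g$. On the set where $h=0$ the density vanishes, and the entropy identity must be read with the convention $0\log 0=0$. One must check that $\log h\circ T$ is integrable against $\pi^*$, or at least that the identity holds in $[0,\infty]$. This follows because $\pi^*$ gives no mass to $\{h\circ T=0\}$, since $\gamma^*$ gives no mass to $\{h=0\}$. The disintegration step also needs care: the fibre-concentration property $R(T^{-1}(t)\mid T=t)=1$ for $R'$-a.e. $t$ holds in standard Borel spaces, and it is what justifies pulling $h\circ T$ out as a constant in the computation above.
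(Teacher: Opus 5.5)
Your proof is correct and follows essentially the same route as the paper: it takes $\pi^*$ with density $h\circ T$ with respect to $R$, checks $T_\#\pi^*=\gamma^*$ by change of variables, computes $H(\pi^*\mid R)=H(\gamma^*\mid R')$ by pushing forward $h\log h$, and gets the lower bound from data processing. The only difference is that you build $\pi^*$ by disintegrating $R$ along $T$, whereas the paper simply defines $\mathrm d\pi^* := (h\circ T)\,\mathrm dR$ directly, so it does not need the disintegration or the fibre-concentration step.
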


\begin{proof}
All Radon--Nikodym identities below are understood almost everywhere with
respect to the corresponding reference measure, and we use the convention
\(0\log 0=0\). Set
\[
h(x',y'):=\exp\bigl(f(x')+g(y')\bigr).
\]
By assumption,
\[
\frac{d\gamma^\ast}{dR'}=h.
\]
Define \(\pi^\ast\in\mathcal P(X\times Y)\) by
\[
\frac{d\pi^\ast}{dR}(x,y)
:=
h\bigl(T(x,y)\bigr)
=
\exp\bigl(f(F(x))+g(G(y))\bigr).
\]
Since \(R'=T_\#R\),
we have for every bounded measurable function \(\phi:X'\times Y'\to\mathbb{R}\),

\begin{align*}
    \int \phi(T) d\pi^\ast 
    = \int \phi(T) h(T) d R
    = \int \phi h d R' = \int \phi d \gamma^\ast.
\end{align*}

Therefore,
\[
T_\#\pi^\ast=\gamma^\ast\in\Pi(\mu,\nu),
\]
and thus \(\pi^\ast\) is feasible for the left-hand problem.

Now let \(\pi\in\mathcal P(X\times Y)\) be any feasible competitor for the
left-hand problem. Since \(T_\#\pi\in\Pi(\mu,\nu)\), the optimality of
\(\gamma^\ast\) and the data-processing inequality for relative entropy yield
\[
\begin{aligned}
H(\pi\mid R)
&\geq
H(T_\#\pi\mid T_\#R) \\
&=
H(T_\#\pi\mid R') \\
&\geq
H(\gamma^\ast\mid R').
\end{aligned}
\]
On the other hand, using again \(R'=T_\#R\), we obtain
\[
\begin{aligned}
H(\pi^\ast\mid R)
&=
\int_{X\times Y}
h\circ T\,\log(h\circ T)\,dR \\
&=
\int_{X'\times Y'}
h\log h\,dR' \\
&=
H(\gamma^\ast\mid R').
\end{aligned}
\]
Consequently, \(\pi^\ast\) attains the infimum in the left-hand problem and
\[
H(\pi^\ast\mid R)=H(\gamma^\ast\mid R').
\]
The asserted representation of \(\pi^\ast\) follows directly from its
definition.
\end{proof}

As in the fully observed case, the entropy decomposition~\eqref{eq:entropy_decomposition} reduces the dynamic problem to an optimization over the endpoint law. We then apply Lemma~\ref{lem:partial_obeserve} to obtain the factorization in terms of \(f\) and \(g\), from which the remaining assertions follow.

\begin{proof}
[Proof of Theorem~\ref{thm:partial_observation}]
We apply Lemma~\ref{lem:partial_obeserve}. Set
\[
X=Y=\mathbb R^d\times\mathcal I,
\qquad
X'=\mathbb R^d\times\mathcal I,
\qquad
Y'=\mathbb R^d.
\]
Let
\[
F=\mathrm{id}_{\mathbb R^d\times\mathcal I},
\qquad
G:\mathbb R^d\times\mathcal I\to \mathbb R^d,
\qquad
G(y,j)=y.
\]
Taking \(R=P_{0,T}\), its push-forward is
\[
R':=T_{\#}P_{0,T}=\overline P_{0,T}.
\]
Hence the projected static problem is
\[
\inf_{\gamma\in\Pi(\rho,\mu_{\mathrm p})}
H(\gamma\mid \overline P_{0,T}).
\]
By the classical entropic optimal transport result (\cite[Theorem 2.1]{nutz2022eot}), this problem admits a unique optimizer \(\gamma^{\rho,\mu_{\mathrm p}}\), and there exist nonnegative Schrödinger potentials \((f_i)_{i\in\mathcal I}\) and \(g\) such that
\[
\frac{\mathrm d\gamma^{\rho,\mu_{\mathrm p}}}{\mathrm dR'}((x,i),y)
=
f_i(x)g(y).
\]
Consequently, Lemma~\ref{lem:partial_obeserve} yields an optimizer \(\pi^{\rho,\mu_{\mathrm p}}\) for the partially observed endpoint problem \eqref{eq:eot-partial}, satisfying
\[
T_{\#}\pi^{\rho,\mu_{\mathrm p}}=\gamma^{\rho,\mu_{\mathrm p}}
\]
and
\[
\frac{\mathrm d\pi^{\rho,\mu_{\mathrm p}}}{\mathrm dP_{0,T}}\bigl((x,i),(y,j)\bigr)
=
f_i(x)g(y).
\tag{4.2}
\label{eq:fac-partial}
\]


\end{proof}

We first prove Theorem~\ref{thm:po_exp_conv}, since it follows directly by embedding the partially observed problem into a fully observed EOT problem. The proof of Theorem~\ref{thm:stability_partial} uses the same embedding but requires an additional argument to obtain the sharper estimate, and is therefore postponed.

\begin{proof}
[Proof of Theorem~\ref{thm:po_exp_conv}]
We prove the result by embedding the partially observed EOT problem into a fully observed EOT problem on an extended state space. Under this embedding, the partially observed problem becomes a particular case of the fully observed setting, and its Sinkhorn iterates coincide with the standard Sinkhorn iterates for the extended problem. The conclusion therefore follows directly from Theorem~\ref{thm:exp}.

Our current EOT problem is defined on the following spaces
\[
X := \mathbb{R}^d \times \mathcal{I},
\qquad
Y := \mathbb{R}^d,
\]
with the cost function
\[
\bar c : (\mathbb{R}^d \times \mathcal{I}) \times \mathbb{R}^d
\longrightarrow \mathbb{R}
\]
and the marginal distributions
\[
\rho \in \mathcal{P}(\mathbb{R}^d \times \mathcal{I})
\quad\text{and}\quad
\mu_{\mathrm p} \in \mathcal{P}(\mathbb{R}^d).
\]
To reduce it to the framework of the previous theorem, we define
\[
X_{\mathrm{ext}} := \mathbb{R}^d \times \mathcal{I},
\qquad
Y_{\mathrm{ext}} := \mathbb{R}^d \times \mathcal{I}.
\]

Next define the extended terminal marginal \(\mu_{\mathrm p}^{\mathrm{ext}}\in
\mathcal{P}(\mathbb{R}^d\times\mathcal{I})\) by
\[
\mu_{\mathrm p}^{\mathrm{ext}}(A\times\{1\}) := \mu_{\mathrm p}(A),
\qquad A\subset \mathbb{R}^d,
\]
and
\[
\mu_{\mathrm p}^{\mathrm{ext}}\bigl(\mathbb{R}^d\times\{2,\ldots,m\}\bigr) := 0.
\]

Define the extended cost
\[
c_{\mathrm{ext}} :
X_{\mathrm{ext}} \times Y_{\mathrm{ext}}
\longrightarrow \mathbb{R}
\]
by
\[
c_{\mathrm{ext}}\bigl((x,i),(y,j)\bigr)
:=
\begin{cases}
\bar c_i(x,y), & j=1,\\
0, & j\neq 1.
\end{cases}
\]
The value of \(c_{\mathrm{ext}}\) on the set \(\{j\neq 1\}\) is irrelevant for the extended EOT problem. Indeed, the extended terminal marginal \(\mu_{\mathrm p}^{\mathrm{ext}}\) is concentrated on \(\mathbb{R}^d\times\{1\}\), hence every admissible coupling is supported on
$X_{\mathrm{ext}}\times \bigl(\mathbb{R}^d\times\{1\}\bigr)$
Therefore only the values of \(c_{\mathrm{ext}}\) with \(j=1\) enter the objective, and the choice \(c_{\mathrm{ext}}=0\) for \(j\neq 1\) is only a convenient convention.

Let $\pi^{\mathrm{ext}}$ be an optimal coupling for the EOT problem with marginals $\rho$ and $\mu_{\mathrm p}^{\mathrm{ext}}$ and cost $c_{\mathrm{ext}}$. Since $\mu_{\mathrm p}^{\mathrm{ext}}$ is concentrated on $\mathbb R^d\times\{1\}$, $\pi^{\mathrm{ext}}$ is necessarily supported on $X_{\mathrm{ext}}\times(\mathbb R^d\times\{1\})$. It therefore induces a coupling $\pi\in\Pi(\rho,\mu_{\mathrm p})$ through
\[
\pi\bigl(d(x,i),dy\bigr)
=
\pi^{\mathrm{ext}}\bigl(d(x,i),d(y,1)\bigr).
\]
This coupling is optimal for the partially observed EOT problem. Conversely, every optimal coupling for the partially observed problem gives rise, through this embedding, to an optimal coupling for the extended problem.

Thus the partially observed EOT problem on \((\mathbb{R}^d\times\mathcal{I})\times\mathbb{R}^d\) is exactly equivalent to the standard EOT problem on \(X_{\mathrm{ext}}\times Y_{\mathrm{ext}}=(\mathbb{R}^d\times\mathcal{I})\times(\mathbb{R}^d\times\mathcal{I})\) with marginals \(\rho\) and \(\mu_{\mathrm p}^{\mathrm{ext}}\), and with cost \(c_{\mathrm{ext}}\).

In view of the above transformations, Theorem~\ref{thm:po_exp_conv} follows directly as a corollary of Theorem~\ref{thm:exp}.
\end{proof}

\begin{proof}[Proof of Theorem~\ref{thm:stability_partial}]
The proof continues to employ the state-space extension method introduced in Proof of Theorem~\ref{thm:po_exp_conv}. That is, the partial observation problem is treated as a full observation problem by extending the terminal marginals $\mu_{\mathrm p},\nu_{\mathrm p}\in\mathcal P(\mathbb R^d)$ to the product space $\mathbb R^d\times\mathcal I$, and concentrating all their mass on the first regime.

Denote the resulting extensions by \(\mu_{\mathrm p}^{\mathrm{ext}}\) and \(\nu_{\mathrm p}^{\mathrm{ext}}\). Their discrete marginals satisfy
\[
\overline{\mu_{\mathrm p}^{\mathrm{ext}}}
=
\overline{\nu_{\mathrm p}^{\mathrm{ext}}}
=
(1,0,\dots,0).
\]
Consequently,
\[
\left\|
\overline{\mu_{\mathrm p}^{\mathrm{ext}}}
-
\overline{\nu_{\mathrm p}^{\mathrm{ext}}}
\right\|_{\mathrm{TV}}
=0.
\]
This highlights a key distinction from the full observation case. In the full observation setting, Theorem~\ref{thm:stable} provides a stability estimate involving the Total Variation (TV) distance of the discrete marginals. However, in the current partial observation setting, this TV term vanishes automatically because both extended terminal marginals are concentrated solely on the first regime.
\end{proof}

\begin{remark}
Removing the TV term merely provides a more refined stability estimate and does not alter the ultimate conclusion of exponential convergence for the relative entropy. This is because, in the case of full observation, the TV term itself exhibits exponential decay—as established in Corollary~\ref{coro:tv}. However, it should be noted that while the property of exponential convergence remains unchanged, the specific convergence rate may be affected.
\end{remark}

\appendix
\section{Proof of Regularity of the Reference Process Density}
\label{app:c2}

In this appendix, we justify the regularity claim used in Example~\ref{ex:c2}. Under the constant-coefficient setting considered there, the reference process satisfies
\begin{equation}
\label{eq:ref_sde}
\mathrm d X_t = b_{I_t}\,\mathrm d t + \sigma_{I_t}\,\mathrm d W_t,
\qquad
X_0=x,\quad I_0=i,
\end{equation}
where \(\sigma_i>0\) for each \(i\in\mathcal I\). In this case, \((I_t)_{t\in[0,T]}\) is a continuous-time Markov chain on \(\mathcal I\) with strictly positive constant transition rates \((\lambda_{ij})_{i,j\in\mathcal I}\).

Recall that \(r_{ij}(0,x;T,y)\) denotes the transition density of the reference process, namely,
\[
\mathbb P\bigl(X_T\in \mathrm d y,\ I_T=j \mid X_0=x,\ I_0=i\bigr)
=
r_{ij}(0,x;T,y)\,\mathrm d y.
\]

We prove the following theorem.

\begin{theorem}
\label{thm:reference_density_c2}
For every \(T>0\) and every \(i,j\in\mathcal I\), the transition density
\[
r_{ij}(0,x;T,y)
\]
is strictly positive and of class \(C^2\) with respect to \((x,y)\) on \(\mathbb R^d\times\mathbb R^d\).
\end{theorem}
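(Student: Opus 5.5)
Conditioning on the whole path of the regime chain is the natural route. In the constant-coefficient case, the chain $(I_t)_{t\in[0,T]}$ is independent of $W$, because its rates do not depend on $X$. Given a regime path $\iota=(I_t)_{t\le T}$, the continuous component is Gaussian:
\[
X_T = x + \int_0^T b_{I_s}\,\mathrm ds + \int_0^T \sigma_{I_s}\,\mathrm dW_s
\sim \mathcal N\bigl(x+m(\iota),\,v(\iota) I_d\bigr),
\]
with $m(\iota)=\int_0^T b_{I_s}\mathrm ds$ and $v(\iota)=\int_0^T\sigma_{I_s}^2\mathrm ds$. Setting $\underline\sigma=\min_i\sigma_i$ and $\overline\sigma=\max_i\sigma_i$, we have $v(\iota)\in[\underline\sigma^2T,\overline\sigma^2T]$ for every path. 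This gives the mixture representation
\[
r_{ij}(0,x;T,y)=\mathbb E_i\Bigl[(2\pi v)^{-d/2}\exp\Bigl(-\tfrac{|y-x-m|^2}{2v}\Bigr)\mathbf 1_{\{I_T=j\}}\Bigr],
\]
where $\mathbb E_i$ is the law of the chain started at $i$.

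To justify this rigorously, I will construct the process on a product space carrying an independent Brownian motion and a Poisson random measure. Because the rates are constant, \eqref{eq:jump_eq} defines $I$ without reference to $X$. For bounded measurable $\phi$, I then compute $\mathbb E[\phi(X_T)\mathbf 1_{I_T=j}]$ by Fubini, conditioning on $\iota$. The Gaussian density in the display above appears as the inner integral.

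For \textbf{positivity}, the integrand is strictly positive whenever $I_T=j$. It therefore suffices that $\mathbb P_i(I_T=j)>0$. This holds because the generator $Q$ of the chain has strictly positive off-diagonal entries, so that $e^{TQ}$ has strictly positive entries for every $T>0$.

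For \textbf{$C^2$ regularity}, the Gaussian kernel $k(x,y;m,v)$ is smooth in $(x,y)$. Its derivatives up to order two are of the form polynomial in $(y-x-m)/v$ and $1/v$, times the kernel. Since $v\ge\underline\sigma^2T>0$ and $|m|\le T\max_i|b_i|$, these derivatives are bounded uniformly in $\iota$ and in $(x,y)$. The uniformity uses that $\sup_u |u|^k e^{-u^2/(2v)}$ is finite and controlled by $v\le\overline\sigma^2T$. I will then differentiate under the expectation by dominated convergence, with a constant dominating function (the measure is a probability measure). This yields continuity of all derivatives up to order two, hence $C^2$ on $\mathbb R^d\times\mathbb R^d$, and in fact $C^\infty$.

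The main obstacle is the rigorous conditional-Gaussian step. One must make sure that, given the regime path, the stochastic integral $\int\sigma_{I_s}\mathrm dW_s$ is Gaussian with variance $v(\iota)$. I plan to do this by noting that for each fixed deterministic càdlàg path $\iota$ the Wiener integral of the deterministic function $s\mapsto\sigma_{\iota(s)}$ is Gaussian. Independence of $I$ and $W$ then lets us substitute $\iota=I$ via a regular conditional distribution (freezing lemma). The rest of the argument is routine bounding.
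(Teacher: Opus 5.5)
Your proof is correct, and it takes a genuinely different and much shorter route than the paper's.

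\textbf{The paper's route.} The paper also starts from the conditional Gaussian structure given $\sigma(X_0,(I_s)_{s\le t})$ (Lemma~\ref{app:lem_bound}), but uses it only for the uniform bound $p_t^k\le(2\pi\underline a t)^{-d/2}$. It then:
\begin{enumerate}
\item smooths the initial Dirac mass;
\item bootstraps regularity of the coupled forward system through interior $L^p$ estimates, the parabolic Sobolev--H\"older embedding and Schauder estimates;
\item extracts a $C^{1,2}_{\mathrm{loc}}$ limit by Arzel\`a--Ascoli;
\item identifies this limit with the true density via the distributional Duhamel formula and its uniqueness (Gronwall in total variation);
\item uses translation invariance $r_{ij}(0,x;T,y)=r_{ij}(0,0;T,y-x)$ to upgrade regularity in $y$ to joint regularity in $(x,y)$.
\end{enumerate}
Positivity comes from the explicit no-switch contribution ($i=j$) and single-switch contribution ($i\neq j$).

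\textbf{Your route.} You observe that the mixture formula $r_{ij}(0,x;T,y)=\mathbb E_i[\mathbf 1_{\{I_T=j\}}q_T(y)]$, which the paper writes down in that very lemma, already settles everything. Since $v(\iota)\ge\underline\sigma^2T$, all $(x,y)$-derivatives of the Gaussian kernel are bounded uniformly in the path and in $(x,y)$. Only this lower bound on $v$ matters, since $\sup_u|u|^ke^{-|u|^2/(2v)}\lesssim v^{k/2}$; your upper bound on $v$ and bound on $|m|$ are not needed. Dominated convergence then gives $C^\infty$ regularity jointly in $(x,y)$, with globally bounded derivatives, in one step. Positivity reduces to $(e^{TQ})_{ij}>0$, which needs only irreducibility.

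For the freezing step you flag as the main obstacle, note that $s\mapsto\sigma_{I_s}$ is a.s.\ piecewise constant with finitely many jumps. The It\^o integral is therefore pathwise a finite sum of Brownian increments over intervals determined by $I$, so the substitution $\iota=I$ is immediate. This is at least as rigorous as the paper's one-line justification.

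\textbf{What each route buys.} Your argument hinges on $I$ being independent of $W$ and on $b_i,\sigma_i$ being state-independent. It would break once the switching rates or coefficients depend on $x$. The PDE bootstrap is the kind of argument that could in principle be adapted to variable coefficients, at the price of far heavier machinery. As written, though, the paper's version also relies on constant coefficients: through the $L^\infty$ bound, the constant-coefficient estimates and translation invariance.
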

Note that the transition density $ r_{ij}( 0, x; T, y ) $ can be seen as the density of the law of the process \eqref{eq:ref_sde} at time $T$ and point $(y,j)$. A more precise meaning of this interpretation is given after Lemma \ref{app:lem_bound}.

We shall treat the reference SDE \eqref{eq:ref_sde} by a smoothing argument on the initial
distribution. Therefore, besides the transition density starting from a fixed
point $X_0=x,\ I_0=i$, we also consider the same dynamics with a more general initial
distribution $\mu_0$. More precisely, we consider
\begin{equation}
\begin{cases}
\mathrm dX_t=b_{I_t}\,\mathrm dt+\sigma_{I_t}\,\mathrm dW_t,\\
(X_0,I_0)\sim \mu_0,
\end{cases}
\label{eq:smoothed-sde}
\end{equation}
where $(X_0,I_0)$ is independent of $W$, and $I$ evolves with the same constant
transition rates $(\lambda_{ij})_{i,j\in\mathcal I}$.

For each $i\in\mathcal I$, we denote by $\mu_t^i$ the $i$-th regime component
of the law of $X_t$, namely
\begin{equation}
\label{eq:regime-marginal}
\mu_t^i(A)
:=
\mathbb P_{\mu_0}(X_t\in A,\ I_t=i),
\qquad A\in\mathcal B(\mathbb R^d).
\end{equation}

First, we prove that the solution to the above SDE \eqref{eq:smoothed-sde} admits a density with respect to the Lebesgue measure, and that this density has a bound. It is worth noting that the bound obtained here does not depend on the initial measure, nor does it require any smoothness of the initial measure.

This boundedness will be used in the PDE regularity estimate in Theorems~\ref{app:ref2} and~\ref{app:ref1}, in order to obtain improved regularity.

\begin{lemma}
\label{app:lem_bound}
Consider the process defined by the SDE \eqref{eq:smoothed-sde} with $ \mu_0 \in \mathcal{P}(\mathbb{R}^d) $. Set
\[
\underline a:=\min_{i\in\mathcal I}\sigma_i^2>0.
\]
Then, for every $t>0$ and every $i\in\mathcal I$, the measure $\mu_t^i$ is
absolutely continuous with respect to the Lebesgue measure. If $p_t^i$ denotes
its density, then
\[
0\le p_t^i(y)\le (2\pi \underline a t)^{-d/2},
\qquad y\in\mathbb R^d,\ i\in\mathcal I.
\]
\end{lemma}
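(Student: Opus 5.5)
The plan is to condition on the whole path of the regime process. Because the coefficients are constant and $I$ is a continuous-time Markov chain driven by the Poisson measure $N_1$, independent of $W$, the pair $(X_0,I_0)$ determines the law of $(I_s)_{s\le t}$ without reference to $W$. Let $\mathcal G:=\sigma\bigl(X_0,(I_s)_{s\in[0,t]}\bigr)$. Conditionally on $\mathcal G$, the SDE \eqref{eq:smoothed-sde} integrates explicitly:
\[
X_t=X_0+\int_0^t b_{I_s}\,\mathrm ds+\int_0^t\sigma_{I_s}\,\mathrm dW_s .
\]
The stochastic integral has a deterministic (given $\mathcal G$) scalar integrand and is independent of $\mathcal G$. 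So, conditionally on $\mathcal G$, $X_t$ is Gaussian with mean $m:=X_0+\int_0^t b_{I_s}\mathrm ds$ and covariance $v\,I_d$, where
\[
v:=\int_0^t\sigma_{I_s}^2\,\mathrm ds\ \ge\ \underline a\,t>0 .
\]
To justify this rigorously, I would construct $W$ and $N_1$ on a product space, so that the conditional law given $\mathcal G$ can be computed by freezing the regime path. Alternatively, I would use the Itô isometry and the conditional characteristic function $\mathbb E[e^{i\langle\xi,X_t\rangle}\mid\mathcal G]=e^{i\langle\xi,m\rangle-v|\xi|^2/2}$.

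Next, for any Borel $A\subset\mathbb R^d$ I would write
\[
\mu_t^i(A)=\mathbb E\bigl[\mathbf 1_{\{I_t=i\}}\,\mathbb P(X_t\in A\mid\mathcal G)\bigr]
=\mathbb E\Bigl[\mathbf 1_{\{I_t=i\}}\int_A(2\pi v)^{-d/2}e^{-|y-m|^2/(2v)}\,\mathrm dy\Bigr].
\]
This uses that $\{I_t=i\}\in\mathcal G$. By Tonelli, $\mu_t^i$ has density
\[
p_t^i(y)=\mathbb E\bigl[\mathbf 1_{\{I_t=i\}}(2\pi v)^{-d/2}e^{-|y-m|^2/(2v)}\bigr],
\]
which is a measurable nonnegative function. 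Bounding the exponential by $1$, $(2\pi v)^{-d/2}$ by $(2\pi\underline a t)^{-d/2}$, and the indicator by $1$ gives the claimed bound. This holds uniformly in $\mu_0$, since only $v\ge\underline a t$ was used.

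The main obstacle is the conditioning step. One must make sure the regime path is genuinely independent of $W$, which holds here because the switching intensities are constant and do not depend on $X$. Then $W$ remains a Brownian motion given $\mathcal G$, and the conditional Gaussian law is legitimate. I would handle this by first proving the representation for piecewise constant regime paths: on each interval between jump times, $X$ evolves as a Brownian motion with drift. I would then use the independence of $(I_0,N_1)$ from $W$ via a Fubini argument on the product probability space. Measurability of $(m,v)$ in the regime path is immediate, since the path has finitely many jumps almost surely.
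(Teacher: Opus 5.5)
Your proposal is correct and follows the paper's proof essentially step for step. You condition on $\sigma(X_0,(I_s)_{s\le t})$, use the conditional Gaussian law with covariance $v_t I_d$, $v_t\ge\underline a\,t$, and apply the tower property plus Fubini to obtain the density $p_t^i(y)=\mathbb E[\mathbf 1_{\{I_t=i\}}q_t(y)]$, while your extra care about the independence of the regime path from $W$ (valid because the switching rates are constant) makes explicit a point the paper leaves implicit.
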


\begin{proof}
Fix \(t>0\) and \(i\in\mathcal I\), and set
\(\mathcal G_t:=\sigma(X_0,(I_s)_{0\le s\le t})\). Conditionally on
\(\mathcal G_t\), one has
\[
X_t=X_0+\int_0^t b_{I_s}\,\mathrm ds+\int_0^t \sigma_{I_s}\,\mathrm dW_s .
\]
Hence \(X_t\) is conditionally Gaussian with mean
\(m_t:=X_0+\int_0^t b_{I_s}\,\mathrm ds\) and covariance matrix
\(v_t I_d\), where \(v_t:=\int_0^t \sigma_{I_s}^2\,\mathrm ds\). Since
\(v_t\ge \underline a t\), its conditional density \(q_t\) satisfies
\(0\le q_t(y)\le (2\pi \underline a t)^{-d/2}\) for all
\(y\in\mathbb R^d\).

Let \(A\in\mathcal B(\mathbb R^d)\). By the tower property,
\[
\mu_t^i(A)
=
\mathbb E_{\mu_0}\!\left[
\mathbf 1_{\{I_t=i\}}
\mathbb P_{\mu_0}(X_t\in A\mid \mathcal G_t)
\right]
=
\mathbb E_{\mu_0}\!\left[
\mathbf 1_{\{I_t=i\}}\int_A q_t(y)\,\mathrm dy
\right].
\]
By Fubini's theorem,
\[
\mu_t^i(A)
=
\int_A
\mathbb E_{\mu_0}\!\left[
\mathbf 1_{\{I_t=i\}}q_t(y)
\right]\mathrm dy .
\]
Therefore \(\mu_t^i\) is absolutely continuous with density
\(p_t^i(y):=\mathbb E_{\mu_0}[\mathbf 1_{\{I_t=i\}}q_t(y)]\). Moreover,
\[
0\le p_t^i(y)\le (2\pi \underline a t)^{-d/2},
\qquad y\in\mathbb R^d .
\]
This proves the claim.
\end{proof}

\begin{remark}
Note that the transition density in Theorem \ref{thm:reference_density_c2} is given by 
$ r_{ij}( 0, x; T, y ) = p_{ T }^{j }( y ) $ for $ \mu_0 = \delta_x\otimes\delta_i $ and $p$ being the density defined by Lemma \ref{app:lem_bound}.
\end{remark}

For measure-valued solutions, the evolution equation must first be interpreted in a suitable weak sense. Although the Fokker--Planck system of the regime-switching SDE holds distributionally, we shall use the distributional Duhamel formulation defined below. This formulation combines the distributional and Duhamel viewpoints, and is more convenient for the uniqueness and convergence argument used later in Lemma~\ref{app:lem_cv_mild}.

\begin{proposition}[Distributional Duhamel formula]
\label{app:pro_formula}
For each regime $i\in\mathcal I$, define the frozen-regime generator $L_i$ by
\[
L_i\phi(x)
:=
b_i\cdot \nabla \phi(x)
+
\frac{\sigma_i^2}{2}\Delta \phi(x),
\qquad \phi\in C_c^\infty(\mathbb R^d).
\]
Let $(P_t^i)_{t\ge0}$ be the Markov semigroup generated by $L_i$. 
For each \(i\in\mathcal I\), we use the component marginal \(\mu_t^i\) defined in \eqref{eq:regime-marginal} for $ \mu_0 \in \mathcal{P}(\mathbb{R}^d) $.

Then, for every $t\ge0$, every $\phi\in C_c^\infty(\mathbb R^d)$, and every $i\in\mathcal I$, one has
\begin{equation}
\label{eq:component-mild-form}
\begin{aligned}
\langle \mu_t^i,\phi\rangle
&=
\langle \mu_0^i,P_t^i\phi\rangle
+
\int_0^t
\left[
-
\sum_{j\neq i}
\lambda_{ij}
\langle \mu_s^i,P_{t-s}^i\phi\rangle
+
\sum_{k\neq i}
\lambda_{ki}
\langle \mu_s^k,P_{t-s}^i\phi\rangle
\right]
\mathrm ds .
\end{aligned}
\end{equation}
\end{proposition}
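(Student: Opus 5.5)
The plan is to derive \eqref{eq:component-mild-form} from Itô's formula applied to a time-dependent test function, with the time dependence chosen so that the frozen-regime generator is cancelled exactly.

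Fix $t>0$, $i\in\mathcal I$ and $\phi\in C_c^\infty(\mathbb R^d)$, and set $u(s,x):=P_{t-s}^i\phi(x)$ for $s\in[0,t]$. Since $L_i$ has constant coefficients, $P^i_{r}\phi(x)=\mathbb E[\phi(x+b_ir+\sigma_iB_r)]$ is a Gaussian convolution of $\phi$. Hence $u$ has the following properties:
\begin{itemize}
\item it is $C^{1,2}$ on $[0,t]\times\mathbb R^d$;
\item it is bounded, with all $x$-derivatives up to order two bounded uniformly in $s$;
\item it solves the backward equation $\partial_s u+L_iu=0$ with $u(t,\cdot)=\phi$.
\end{itemize}

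\textbf{Step 1: Itô's formula for a regime-weighted functional.} Consider the process $Y_s:=\mathbf 1_{\{I_s=i\}}\,u(s,X_s)$ on $[0,t]$. On the continuous part of the dynamics the indicator is frozen, so the contribution is
\[
\mathbf 1_{\{I_{s}=i\}}\bigl(\partial_s u+b_i\cdot\nabla u+\tfrac{\sigma_i^2}{2}\Delta u\bigr)(s,X_s)\,ds
\]
plus a martingale term $\mathbf 1_{\{I_s=i\}}\sigma_i\nabla u(s,X_s)\cdot dW_s$. The first expression vanishes by the backward equation. The jumps of $I$ come from the Poisson random measure $N_1$ in \eqref{eq:jump_eq}, and they change $Y$ by $u(s,X_s)\bigl(\mathbf 1_{\{I_s=i\}}-\mathbf 1_{\{I_{s-}=i\}}\bigr)$. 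Compensating $N_1$ by $ds\,du$ and using $|\Delta_{kj}|=\lambda_{kj}$, the compensator of the jump part is
\[
\Bigl[-\mathbf 1_{\{I_{s-}=i\}}\sum_{j\neq i}\lambda_{ij}+\sum_{k\neq i}\mathbf 1_{\{I_{s-}=k\}}\lambda_{ki}\Bigr]u(s,X_s)\,ds.
\]

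\textbf{Step 2: take expectations.} Both the Brownian stochastic integral and the compensated Poisson integral have bounded integrands, because $u$ and $\nabla u$ are bounded and the rates are constant and finite. They are therefore true martingales and vanish in expectation. Taking expectations under $\mathbb P_{\mu_0}$ gives
\[
\mathbb E[Y_t]-\mathbb E[Y_0]=\int_0^t\Bigl[-\sum_{j\neq i}\lambda_{ij}\,\mathbb E[\mathbf 1_{\{I_s=i\}}u(s,X_s)]+\sum_{k\neq i}\lambda_{ki}\,\mathbb E[\mathbf 1_{\{I_s=k\}}u(s,X_s)]\Bigr]ds.
\]
Here $I_{s-}$ has been replaced by $I_s$, which is allowed because the jump times are countable, so the two agree for Lebesgue-a.e.\ $s$.

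\textbf{Step 3: identify the terms.} By \eqref{eq:regime-marginal} and $u(s,\cdot)=P^i_{t-s}\phi$, we have $\mathbb E[\mathbf 1_{\{I_s=k\}}u(s,X_s)]=\langle\mu_s^k,P^i_{t-s}\phi\rangle$. Moreover $\mathbb E[Y_t]=\langle\mu_t^i,\phi\rangle$ and $\mathbb E[Y_0]=\langle\mu_0^i,P_t^i\phi\rangle$. Substituting these into Step 2 is exactly \eqref{eq:component-mild-form}. The case $t=0$ is trivial.

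\textbf{Main obstacle.} The step requiring care is the rigorous Itô formula for the product of an indicator of a pure-jump component with a $C^{1,2}$ function of the continuous component. I would apply Itô's formula for semimartingales with jumps to $F(s,X_s,I_s)$, where $F(s,x,k):=\mathbf 1_{\{k=i\}}u(s,x)$; this function is smooth in $x$ for each fixed $k$. The jump integral against $N_1$ is then compensated as above, and the boundedness of $u$ together with the finiteness of the rates justifies both the martingale property and the use of Fubini's theorem.
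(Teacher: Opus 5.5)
Your proposal is correct and follows essentially the same route as the paper: you apply Itô's formula with jumps to $\mathbf 1_{\{I_s=i\}}P^i_{t-s}\phi(X_s)$, cancel the continuous part by the backward equation $\partial_s u+L_iu=0$, and compensate the regime jumps before taking expectations. The only difference is cosmetic: the paper compensates the counting processes $N^{k\ell}$ of $k\to\ell$ jumps (compensator $\lambda_{k\ell}\mathbf 1_{\{I_{s-}=k\}}\,ds$), whereas you compensate the driving Poisson random measure $N_1$ directly, which gives the same drift term, and your explicit justifications (bounded integrands, $I_{s-}=I_s$ for a.e.\ $s$) fill in details the paper leaves implicit.
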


\begin{proof}
Fix \(t>0\), \(i\in\mathcal I\), and \(\phi\in C_c^\infty(\mathbb R^d)\). Set
\(u_i(s,x):=P_{t-s}^i\phi(x)\) for \(0\le s\le t\). Then
\(u_i\) solves the backward equation \(\partial_s u_i+L_i u_i=0\) with
terminal condition \(u_i(t,\cdot)=\phi\).

For \(k\neq \ell\), let
\(N_t^{k\ell}:=\sum_{0<s\le t}\mathbf 1_{\{I_{s-}=k,\ I_s=\ell\}}\)
be the counting process of jumps from \(k\) to \(\ell\). Since the
transition rates are constant,
\(\widetilde N_t^{k\ell}:=N_t^{k\ell}
-\int_0^t\lambda_{k\ell}\mathbf 1_{\{I_{s-}=k\}}\,\mathrm ds\)
is a martingale.

We apply Itô's formula with jumps to
\(Z_s^i:=\mathbf 1_{\{I_s=i\}}u_i(s,X_s)\). The backward equation cancels
the continuous diffusion part. Taking expectations, the martingale terms
vanish, and the compensators of the jump counting processes give
\[
\begin{aligned}
\mathbb E\!\left[\mathbf 1_{\{I_t=i\}}\phi(X_t)\right]
&=
\mathbb E\!\left[\mathbf 1_{\{I_0=i\}}P_t^i\phi(X_0)\right]  \\
&\quad+
\int_0^t
\left[
-\sum_{j\neq i}\lambda_{ij}
\mathbb E\!\left[
\mathbf 1_{\{I_s=i\}}P_{t-s}^i\phi(X_s)
\right]
+
\sum_{k\neq i}\lambda_{ki}
\mathbb E\!\left[
\mathbf 1_{\{I_s=k\}}P_{t-s}^i\phi(X_s)
\right]
\right]\mathrm ds .
\end{aligned}
\]
Finally, using
\(\langle\mu_s^k,\psi\rangle
=\mathbb E[\mathbf 1_{\{I_s=k\}}\psi(X_s)]\), this identity becomes
exactly the desired distributional Duhamel formula.
\end{proof}

We next prove uniqueness for the distributional Duhamel system. This will allow us to identify any limiting measure-valued evolution obtained from the smoothed approximations.

\begin{theorem}[Uniqueness for the distributional Duhamel system]
\label{app:thm_uni}
Let $T>0$. Let
$(\mu_t^i)_{t\in[0,T],\,i\in\mathcal I}$ and
$(\widetilde\mu_t^i)_{t\in[0,T],\,i\in\mathcal I}$
be two families of finite nonnegative Borel measures on $\mathbb R^d$.
Assume that both families satisfy the distributional Duhamel formula~\eqref{eq:component-mild-form} with the same initial data, namely
\[
\mu_0^i=\widetilde\mu_0^i,
\qquad i\in\mathcal I.
\]
Assume moreover that
\[
\sup_{t\in[0,T]}
\sum_{i\in\mathcal I}
\left(
\mu_t^i(\mathbb R^d)
+
\widetilde\mu_t^i(\mathbb R^d)
\right)
<\infty .
\]
Then, for every $t\in[0,T]$ and every $i\in\mathcal I$,
\[
\mu_t^i=\widetilde\mu_t^i .
\]
In particular, the distributional Duhamel system admits at most one solution
in this class.
\end{theorem}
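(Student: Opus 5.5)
The plan is a Gronwall-type argument on the difference of the two families, tested against a suitable class of functions. Set $\eta_t^i:=\mu_t^i-\widetilde\mu_t^i$; this is a finite signed measure. Subtracting the two Duhamel identities \eqref{eq:component-mild-form}, the initial terms cancel because $\mu_0^i=\widetilde\mu_0^i$. For every $\phi\in C_c^\infty(\mathbb R^d)$ we are left with
\[
\langle \eta_t^i,\phi\rangle
=
\int_0^t\Bigl[-\sum_{j\neq i}\lambda_{ij}\langle\eta_s^i,P^i_{t-s}\phi\rangle+\sum_{k\neq i}\lambda_{ki}\langle\eta_s^k,P^i_{t-s}\phi\rangle\Bigr]\mathrm ds .
\]

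The natural quantity to control is the total variation $\|\eta_t^i\|_{\mathrm{TV}}=\sup\{\langle\eta_t^i,\phi\rangle:\phi\in C_c^\infty,\ \|\phi\|_\infty\le1\}$. This supremum formula holds because finite signed Radon measures are determined by their action on $C_c$, and $C_c^\infty$ is dense in $C_0$ in sup norm. The frozen-regime semigroup $P^i_{t-s}$ is a Markov (Gaussian convolution) semigroup, so $\|P^i_{t-s}\phi\|_\infty\le\|\phi\|_\infty$. However, $P^i_{t-s}\phi$ is no longer compactly supported: it is a bounded continuous function (indeed a Schwartz-type function in $C_0$).

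\textbf{First step (main technical point).} Extend the identity from test functions in $C_c^\infty$ to bounded continuous test functions, so that pairings such as $\langle\eta_s^k,P^i_{t-s}\phi\rangle$ make sense and are bounded by $\|\eta_s^k\|_{\mathrm{TV}}\|\phi\|_\infty$. I would proceed as follows.
\begin{itemize}
\item Note that the pairings in the integrand are already with $C_0$ functions. They are well defined because the measures are finite, with uniformly bounded mass by the hypothesis $\sup_t\sum_i(\mu_t^i+\widetilde\mu_t^i)(\mathbb R^d)<\infty$.
\item Bound the integrand directly: $|\langle\eta_s^k,P^i_{t-s}\phi\rangle|\le\|\eta_s^k\|_{\mathrm{TV}}$.
\item Check measurability in $s$ of $s\mapsto\|\eta_s^i\|_{\mathrm{TV}}$, or replace it by an outer integral or supremum. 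Measurability of $s\mapsto\langle\mu_s^k,\psi\rangle$ is implicit in the formula being meaningful. Alternatively, work with $h(t):=\sup_{r\le t}\sum_i\|\eta_r^i\|_{\mathrm{TV}}$, which is monotone and finite by the mass bound.
\end{itemize}
Taking the supremum over $\|\phi\|_\infty\le 1$ and summing over $i$ then gives
\[
\sum_i\|\eta_t^i\|_{\mathrm{TV}}\le \Lambda\int_0^t\sum_k\|\eta_s^k\|_{\mathrm{TV}}\,\mathrm ds ,
\qquad
\Lambda:=2m\max_{i\neq j}\lambda_{ij}.
\]

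\textbf{Conclusion.} Using the monotone envelope $h$, this becomes $h(t)\le\Lambda\int_0^t h(s)\,\mathrm ds$ with $h$ bounded. Iterating gives $h(t)\le \|h\|_\infty(\Lambda t)^n/n!\to0$, which avoids any measurability issue in Gronwall. Hence $\eta_t^i=0$ for all $t\in[0,T]$ and all $i$, that is, $\mu_t^i=\widetilde\mu_t^i$.

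I expect the only real obstacle to be the justification that the total variation norm is attained by $C_c^\infty$ test functions together with the uniform boundedness of $P^i$ on $C_0$; both are standard. The rest is a linear Volterra estimate.
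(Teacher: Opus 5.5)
Your proposal is correct and follows the paper's proof: you subtract the two Duhamel identities, bound each pairing by the total variation using the sup-norm contractivity of $P^i_{t-s}$, sum over regimes, and close with Gronwall. Two of your additions make rigorous points that the paper passes over silently: identifying the TV norm through $C_c^\infty$ test functions, and using the monotone envelope $h$ so that measurability of $s\mapsto\|\eta_s^i\|_{\mathrm{TV}}$ is never needed.
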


\begin{proof}
Set \(\Delta_t^i:=\mu_t^i-\widetilde\mu_t^i\). Since the two families have the same initial data, subtracting the two Duhamel formulas gives, for every admissible test function \(\phi\),
\[
\langle \Delta_t^i,\phi\rangle
=
\int_0^t
\left[
-\sum_{j\neq i}\lambda_{ij}
\langle \Delta_s^i,P_{t-s}^i\phi\rangle
+
\sum_{k\neq i}\lambda_{ki}
\langle \Delta_s^k,P_{t-s}^i\phi\rangle
\right]\mathrm ds .
\]
Since \(P_t^i\) is a Markov semigroup, it is a contraction in the sup norm. Hence, for \(\|\phi\|_\infty\le 1\),
\[
|\langle \Delta_t^i,\phi\rangle|
\le
\int_0^t
\left[
\sum_{j\neq i}\lambda_{ij}\|\Delta_s^i\|_{\mathrm{TV}}
+
\sum_{k\neq i}\lambda_{ki}\|\Delta_s^k\|_{\mathrm{TV}}
\right]\mathrm ds .
\]
Taking the supremum over \(\|\phi\|_\infty\le 1\), we obtain
\[
\|\Delta_t^i\|_{\mathrm{TV}}
\le
\int_0^t
\left[
\sum_{j\neq i}\lambda_{ij}\|\Delta_s^i\|_{\mathrm{TV}}
+
\sum_{k\neq i}\lambda_{ki}\|\Delta_s^k\|_{\mathrm{TV}}
\right]\mathrm ds .
\]
Let
\[
D(t):=\sum_{i\in\mathcal I}\|\Delta_t^i\|_{\mathrm{TV}},
\qquad
\Lambda:=\max_{i\in\mathcal I}\sum_{j\neq i}\lambda_{ij}.
\]
Summing the previous inequality over \(i\in\mathcal I\) yields
\[
D(t)\le 2\Lambda\int_0^t D(s)\,\mathrm ds .
\]
By Gronwall's lemma, \(D(t)=0\) for every \(t\in[0,T]\). Therefore
\(\Delta_t^i=0\) for every \(t\in[0,T]\) and every \(i\in\mathcal I\), namely
\(\mu_t^i=\widetilde\mu_t^i\). This proves uniqueness.
\end{proof}

We next define the smoothing procedure precisely and record the PDE system satisfied by the corresponding densities.

\begin{definition}[Smoothed approximation of the initial Dirac mass]
\label{app:def_sm}
Fix an initial point $(x,i)\in \mathbb R^d\times\mathcal I$ and let
$\varepsilon>0$. We define the smoothed initial distribution
$\mu_0^\varepsilon\in\mathcal P(\mathbb R^d\times\mathcal I)$ by smoothing only
the continuous component and keeping the initial regime fixed. More precisely,
\[
\mu_0^\varepsilon(\mathrm dz,\{k\})
=
\mathbf 1_{\{k=i\}}
(2\pi\varepsilon)^{-d/2}
\exp\left(
-\frac{|z-x|^2}{2\varepsilon}
\right)
\mathrm dz,
\qquad k\in\mathcal I .
\]
Let $(X_t^\varepsilon,I_t^\varepsilon)_{t\ge0}$ be the solution of the same
regime-switching SDE with initial distribution $\mu_0^\varepsilon$, namely
\[
\begin{cases}
\mathrm dX_t^\varepsilon
=
b_{I_t^\varepsilon}\,\mathrm dt
+
\sigma_{I_t^\varepsilon}\,\mathrm dW_t,\\
(X_0^\varepsilon,I_0^\varepsilon)\sim \mu_0^\varepsilon,
\end{cases}
\]
where $(X_0^\varepsilon,I_0^\varepsilon)$ is independent of $W$, and
$I^\varepsilon$ evolves with the same constant transition rates
$(\lambda_{kl})_{k,l\in\mathcal I}$.

For each $k\in\mathcal I$, define the regime-wise marginal measure
\[
\mu_t^{\varepsilon,k}(A)
:=
\mathbb P(X_t^\varepsilon\in A,\ I_t^\varepsilon=k),
\qquad A\in\mathcal B(\mathbb R^d).
\]
By Lemma~\ref{app:lem_bound}, for every $t>0$, the measure $\mu_t^{\varepsilon,k}$ admits a
density $p_t^{\varepsilon,k}$ with respect to the Lebesgue measure, namely
\[
\mu_t^{\varepsilon,k}(\mathrm dz)
=
p_t^{\varepsilon,k}(z)\,\mathrm dz .
\]
Moreover,
\[
0\le p_t^{\varepsilon,k}(z)
\le
(2\pi\underline a t)^{-d/2},
\qquad t>0,\ z\in\mathbb R^d,\ k\in\mathcal I .
\]
The initial data are given by
\[
p_0^{\varepsilon,k}(z)
=
\mathbf 1_{\{k=i\}}
(2\pi\varepsilon)^{-d/2}
\exp\left(
-\frac{|z-x|^2}{2\varepsilon}
\right),
\qquad k\in\mathcal I .
\]
For $t>0$, the density vector
$p^\varepsilon=(p^{\varepsilon,k})_{k\in\mathcal I}$ satisfies the forward
system
\begin{equation}
\label{eq:regularised}
\partial_t p_t^{\varepsilon,k}
=
-b_k\cdot\nabla p_t^{\varepsilon,k}
+
\frac{\sigma_k^2}{2}\Delta p_t^{\varepsilon,k}
-
\sum_{l\neq k}\lambda_{kl}p_t^{\varepsilon,k}
+
\sum_{l\neq k}\lambda_{lk}p_t^{\varepsilon,l},
\qquad k\in\mathcal I .
\end{equation}
\end{definition}

Although \(p^\varepsilon=(p^{\varepsilon,k})_{k\in\mathcal I}\) solves a coupled forward system \eqref{eq:regularised}, we analyze it component by component. For each fixed \(k\in\mathcal I\), the \(k\)-th equation can be rewritten as the scalar parabolic equation
\[
\partial_t p^{\varepsilon,k}_t
+ b_k\cdot \nabla p^{\varepsilon,k}_t
-\frac{\sigma_k^2}{2}\Delta p^{\varepsilon,k}_t
= f^{\varepsilon,k}_t,
\]
where
\[
f^{\varepsilon,k}_t
:=
-\sum_{\ell\neq k}\lambda_{k\ell}p^{\varepsilon,k}_t
+\sum_{\ell\neq k}\lambda_{\ell k}p^{\varepsilon,\ell}_t .
\]
This reformulation allows us to bootstrap the regularity of \(p^\varepsilon\). Lemma~\ref{app:lem_bound} gives uniform \(L^\infty\) bounds on \(p^{\varepsilon,k}\), hence \(f^{\varepsilon,k}\) is uniformly bounded. Interior parabolic \(L^p\) estimates, together with the parabolic Sobolev--Hölder embedding, yield Hölder regularity of \(p^{\varepsilon,k}\). Hence \(f^{\varepsilon,k}\) is Hölder continuous as well. Applying the Schauder estimate to the scalar equation gives \(C^{1+\alpha/2,2+\alpha}_{\mathrm{loc}}\) regularity of \(p^{\varepsilon,k}\). Doing this for each \(k\in\mathcal I\) gives the desired regularity of \(p^\varepsilon\).

The following interior parabolic $L^p$ regularity estimate is adapted from ~\cite[Theorem~7.22]{lieberman1996second}.
\begin{theorem}
[Interior parabolic $L^p$ regularity estimate]
\label{app:ref2}
Assume that $u$ satisfies
\[
\partial_t u + b \cdot \nabla u - \frac{\sigma^2}{2}\Delta u = f
\qquad \text{in } (\tau-\delta,T+\delta)\times B_R,
\]
and that $u$ is smooth. Then
\[
\|\partial_t u\|_{L^p((\tau,T)\times B_{R/2})}
+
\|\nabla u\|_{L^p((\tau,T)\times B_{R/2})}
+
\|D_x^2 u\|_{L^p((\tau,T)\times B_{R/2})}
\le
C\Bigl(
\|f\|_{L^p((\tau-\delta,T+\delta)\times B_R)}
+
\|u\|_{L^p((\tau-\delta,T+\delta)\times B_R)}
\Bigr),
\]
where $b\in\mathbb{R}^d$ and $\sigma\neq 0$ are constants, and
$
C=C(p,\tau,T,R,\delta,b,\sigma).
$
\end{theorem}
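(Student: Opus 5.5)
The estimate in Theorem~\ref{app:ref2} is a constant-coefficient interior $L^p$ estimate. I would derive it from the standard Calder\'on--Zygmund theory for the heat operator, combined with a cutoff argument. Since the statement is adapted from \cite[Theorem~7.22]{lieberman1996second}, the plan is to reduce to that result, or equivalently to the global $L^p$ estimate for the heat equation on $\mathbb R\times\mathbb R^d$.

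\textbf{Step 1: removing the drift.} Set $v(t,x):=u(t,x+bt)$. Then $\partial_t v-\frac{\sigma^2}{2}\Delta v=\tilde f$, where $\tilde f(t,x)=f(t,x+bt)$. This change of variables is measure preserving, but it moves the spatial ball with speed $|b|$ over the time interval of length $T-\tau+2\delta$. I would therefore shrink the balls, replacing $B_R$ by $B_{R'}$ and $B_{R/2}$ by $B_{R''}$, and, if necessary, cover $(\tau,T)\times B_{R/2}$ by finitely many short time slabs in which the drift displacement is small. The number of slabs depends only on $b$, $R$, $T-\tau$ and $\delta$, which keeps the constant of the stated form. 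A simple rescaling of time then sets $\sigma^2/2=1$.

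\textbf{Step 2: cutoff.} Choose $\eta\in C_c^\infty((\tau-\delta,T+\delta)\times B_R)$ with $\eta\equiv1$ on $(\tau,T)\times B_{R/2}$. The function $w=\eta v$ solves the heat equation on all of $\mathbb R^{1+d}$ with right-hand side
\[
\eta\tilde f+v\,\partial_t\eta-2\nabla\eta\cdot\nabla v-v\Delta\eta .
\]
The parabolic Calder\'on--Zygmund estimate then gives
\[
\|\partial_t w\|_p+\|D^2w\|_p\le C\Bigl(\|\tilde f\|_{L^p}+\|v\|_{L^p}+\|\nabla v\|_{L^p(\operatorname{supp}\eta)}\Bigr).
\]

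\textbf{Step 3: absorbing the gradient term (main obstacle).} I would handle the term $\|\nabla v\|_{L^p(\operatorname{supp}\eta)}$ with a nested family of cutoffs between $B_{R/2}$ and $B_R$, using the interpolation inequality
\[
\|\nabla v\|_{L^p}\le\varepsilon\|D^2v\|_{L^p}+C_\varepsilon\|v\|_{L^p}
\]
on each ball. The standard iteration lemma over radii $r<s$, applied to $\Phi(r)=\|D^2v\|_{L^p(Q_r)}$ with an $(s-r)^{-2}$ weight, then absorbs the gradient into the left-hand side. Finiteness of $\Phi$, which is needed to absorb, holds because $u$ is assumed smooth. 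The bound on $\|\nabla u\|_{L^p}$ itself follows from the same interpolation inequality.

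\textbf{Step 4: undoing the changes of variables.} Reversing the drift shift and the time scaling, and noting that $|\det|=1$ for the shift, yields the stated estimate with $C=C(p,\tau,T,R,\delta,b,\sigma)$.
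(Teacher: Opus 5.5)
Your argument is correct. It goes further than the paper, which gives no proof of this statement and simply adapts \cite[Theorem~7.22]{lieberman1996second}. You instead reconstruct the standard argument behind such interior estimates: start from the whole-space Calder\'on--Zygmund bound for the heat operator, localise, and absorb the first-order terms. This is self-contained and makes the dependence of $C$ on $(p,\tau,T,R,\delta,b,\sigma)$ explicit, while the citation buys brevity and covers much more general operators.

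A few points to tighten:
\begin{enumerate}
\item You should restrict to $1<p<\infty$. The Calder\'on--Zygmund bound, and indeed the statement itself, fail for $p=1$ and $p=\infty$; counterexamples exist already among time-independent solutions, i.e.\ for the Poisson equation.
\item Step~1 is valid but unnecessary. Since $b$ is constant, you can keep $\eta\, b\cdot\nabla u$ on the right-hand side of the equation for $\eta u$. It is a first-order term of the same type as $\nabla\eta\cdot\nabla v$ and is absorbed by the same interpolation in Step~3, which avoids the slanted domains and the slab covering.
\item In Step~3 the cylinders $Q_r$ must shrink in time as well as in space. Otherwise the gradient term lives on a longer time interval than the one controlled by $\Phi$.
\item The largest cylinder in the iteration should be compactly contained in $(\tau-\delta,T+\delta)\times B_R$, because smoothness on the open set only guarantees $\Phi<\infty$ there.
\end{enumerate}
With time gaps proportional to $\delta(s-r)/R$, the term $v\,\partial_t\eta$ contributes at most $C(R,\delta)(s-r)^{-2}\|v\|_{L^p}$, so the iteration lemma applies as you describe.
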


We recall the following parabolic Sobolev--Hölder embedding, which follows from \cite[Chapter II, Lemma~3.3]{Ladyzhenskaya1968} by taking l=1 and r=s=0.

\begin{theorem}[Parabolic Sobolev--H\"older embedding]
\label{app:ref3}
Let \(n \ge 1\), \(1<q<\infty\), \(q>\frac{n+2}{2}\), and \(0<\alpha<2-\frac{n+2}{q}\). Then
\[
W^{2,1}_q\big([\tau,T]\times B_R\big)\hookrightarrow
C^{\alpha,\alpha/2}\big([\tau,T]\times B_R\big).
\]
More precisely, there exists a constant \(C=C(n,q,\alpha,R,\tau,T)>0\) such that, for every
\[
u\in W^{2,1}_q\big([\tau,T]\times B_R\big),
\]
one has
\[
\|u\|_{C^{\alpha,\alpha/2}\big([\tau,T]\times B_R\big)}
\le
C\,\|u\|_{W^{2,1}_q\big([\tau,T]\times B_R\big)}.
\]
\end{theorem}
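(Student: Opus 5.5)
The plan is to reduce the statement to a parabolic Campanato--Morrey estimate on the whole space, and to treat the ranges $\alpha<1$ and $\alpha\ge1$ at the same time by using polynomial approximations of parabolic degree $\le 1$. Throughout I use the convention of \cite{Ladyzhenskaya1968}: for $\alpha\in(0,1)$ the norm of $C^{\alpha,\alpha/2}$ is the parabolic Hölder norm of $u$. For $\alpha\in(1,2)$ it additionally contains $\nabla u$ in $C^{\alpha-1,(\alpha-1)/2}$ and the $\tfrac{\alpha}{2}$-Hölder seminorm of $u$ in time.

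\textbf{Step 1: reduction to one sharp exponent.} Set $\beta:=2-\frac{n+2}{q}>0$. Since $[\tau,T]\times B_R$ is bounded, the Hölder scale is monotone: $C^{\alpha',\alpha'/2}\hookrightarrow C^{\alpha,\alpha/2}$ for $\alpha<\alpha'$. It therefore suffices to prove the embedding for one non-integer $\alpha'\in(\alpha,\beta)$. This also disposes of the borderline value $\alpha=1$ (when $\beta>1$): I pick $\alpha'\in(1,\beta)$.

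I then extend $u$ to $Eu\in W^{2,1}_q(\mathbb R^{n+1})$ with compact support and $\|Eu\|_{W^{2,1}_q}\le C\|u\|_{W^{2,1}_q([\tau,T]\times B_R)}$. The extension uses reflection of order two in $x$ across $\partial B_R$ after flattening, reflection of order one in $t$, and a cutoff. So it suffices to work on $\mathbb R^{n+1}$.

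\textbf{Step 2: parabolic Poincaré on cylinders.} Let $Q_r=Q_r(x_0,t_0)=B_r(x_0)\times(t_0-r^2,t_0)$. Let $P_r$ be the affine-in-$x$ polynomial $a+b\cdot(x-x_0)$ whose coefficients are the averages over $Q_r$ of $u-\nabla u\cdot(x-x_0)$ and of $\nabla u$. Iterating the spatial Poincaré inequality and controlling the time oscillation of averages through $\partial_t u$ should give
\[
\|u-P_r\|_{L^q(Q_r)}\le C r^2\bigl(\|D^2_xu\|_{L^q(Q_r)}+\|\partial_tu\|_{L^q(Q_r)}\bigr).
\]
By Hölder's inequality this yields
\[
\frac{1}{|Q_r|}\int_{Q_r}|u-P_r|\le C r^{2-\frac{n+2}{q}}\|u\|_{W^{2,1}_q}=Cr^{\beta}\|u\|_{W^{2,1}_q}.
\]
The same argument applied to $\nabla u$, together with constants, gives the oscillation bound $r^{\beta-1}$ whenever $\beta>1$.

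\textbf{Step 3: Campanato's characterization.} I apply Campanato's theorem in the parabolic metric $|x|+|t|^{1/2}$. Decay of the mean oscillation against polynomials of parabolic degree $\le1$ at rate $r^{\alpha'}$ implies that $u\in C^{\alpha',\alpha'/2}$, with norm bounded by the Campanato seminorm plus $\|u\|_{L^q}$. This covers both cases: for $\alpha'<1$ only the constant part of $P_r$ matters, and for $\alpha'\in(1,2)$ the linear part gives Hölder continuity of $\nabla u$ of order $\alpha'-1$ and the $\alpha'/2$ time regularity of $u$. Restricting back to $[\tau,T]\times B_R$ gives the stated bound, with $C$ depending only on $n,q,\alpha,R,\tau,T$.

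\textbf{Main obstacle.} The step I expect to be hardest is the Poincaré estimate in Step 2 for the degree-one polynomial. It requires carefully mixing the time derivative with the spatial Hessian under parabolic scaling. If it becomes messy, I would instead write $Eu=\Gamma*(\partial_t-\Delta)Eu$ with the heat kernel $\Gamma$. I would then estimate $\Gamma$ and $\nabla\Gamma$ in parabolic $L^{q'}$-Hölder differences, which is the potential-theoretic route of \cite{Ladyzhenskaya1968}.
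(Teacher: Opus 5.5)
Your argument is correct, but it does not follow the paper. The paper gives no proof of its own: it specializes the general anisotropic embedding \cite[Chapter II, Lemma~3.3]{Ladyzhenskaya1968} for $W^{2l,l}_q$ (H\"older bounds on $D_t^rD_x^su$ whenever $2l-2r-s-\frac{n+2}{q}>0$) to $l=1$, $r=s=0$. That lemma rests on integral representations, which is essentially the heat-potential argument you keep as a fallback.

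Your Campanato route (extension to $\mathbb R^{n+1}$, a scale-invariant Poincar\'e inequality for affine-in-$x$ approximants, then Campanato's characterization in the metric $|x|+|t|^{1/2}$) is self-contained. It also makes explicit two things the citation leaves implicit. First, it fixes the meaning of $C^{\alpha,\alpha/2}$ for $\alpha>1$: under the literal reading ``H\"older of order $\alpha$ in $x$'' the statement would fail, although the paper only uses some $\alpha\in(0,1)$ in the Schauder step. Second, it shows that the endpoint exponent $\beta$ itself is attained when $\beta\neq1$. The price is the extension operator and the first-order parabolic Campanato theorem, which is standard but not trivial.

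The one place where your sketch of Step 2 does not go through verbatim is the time oscillation of the coefficient $b$. With sharp averages over $B_r$, the time derivative of $\frac{1}{|B_r|}\int_{B_r}\nabla u\,dx$ is a boundary integral of $\partial_tu$, which $W^{2,1}_q$ does not control, so you cannot bound it ``through $\partial_tu$'' directly. The inequality you claim is still true, and either of two fixes works:
\begin{itemize}
\item Use averages weighted by a cutoff $\eta\in C_c^\infty(B_r(x_0))$. Then $\int\nabla u\,\eta\,dx=-\int u\,\nabla\eta\,dx$ has time derivative $-\int\partial_tu\,\nabla\eta\,dx$, which contributes exactly $Cr^2\|\partial_tu\|_{L^q(Q_r)}$ after the Hölder bookkeeping.
\item Prove the inequality on $Q_1$ by compactness and rescale. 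Your $P_r$ is a continuous projection onto the kernel of $u\mapsto(D_x^2u,\partial_tu)$, namely the time-independent affine functions of $x$. Aubin--Lions shows that bounded sets of $W^{2,1}_q(Q_1)$ are precompact in $L^q$ together with their spatial gradients.
\end{itemize}
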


The following interior parabolic Schauder estimate is adapted from \cite[Ch.~8]{Krylov1996}.
\begin{theorem}[Interior parabolic Schauder estimate]
\label{app:ref1}
Let $\alpha\in(0,1)$. Suppose $u$ satisfies
\[
\partial_t u+b\cdot \nabla u-\frac{\sigma^2}{2}\Delta u=f
\qquad \text{in }(0,T)\times B_R,
\]
where $b\in\mathbb R^n$ and $\sigma>0$ are constants. If
\[
f\in C^{\alpha/2,\alpha}((0,T)\times B_R),
\]
then
\[
\|u\|_{C^{\,1+\alpha/2, 2+\alpha}((0,T)\times B_{R/2})}
\le
C\Big(
\|f\|_{C^{\alpha/2,\alpha}((0,T)\times B_R)}
+\|u\|_{L^\infty((0,T)\times B_R)}
\Big),
\]
for some constant $C=C(n,\alpha,\sigma,b,R,T)$.
\end{theorem}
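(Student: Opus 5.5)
The plan is to reduce the statement to the classical interior Schauder estimate for the heat equation, using a Galilean change of variables and a parabolic rescaling. Since $b$ and $\sigma$ are constants, set $v(t,y):=u(t,y+bt)$. Then $\partial_t v=\partial_t u+b\cdot\nabla u$, so
\[
\partial_t v-\frac{\sigma^2}{2}\Delta v=\tilde f,\qquad \tilde f(t,y):=f(t,y+bt).
\]
Rescaling time by $s=\sigma^2t/2$ turns this into the unit heat equation. Both maps are smooth diffeomorphisms with Jacobians depending only on $(b,\sigma,T)$. They therefore preserve the parabolic H\"older norms $C^{\alpha/2,\alpha}$ and $C^{1+\alpha/2,2+\alpha}$ up to constants $C(b,\sigma,T,\alpha)$, and they preserve the sup norm exactly.

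One caveat is the sheared domain. The cylinder $(0,T)\times B_R$ becomes the slanted region $\{(t,y):\,|y+bt|<R\}$, so I will not transport cylinders globally. Instead I cover $(0,T)\times B_{R/2}$ by small parabolic cylinders $Q_r(t_0,x_0)=(t_0-r^2,t_0)\times B_r(x_0)$. The radius $r=r(R,|b|,T)$ is chosen so that each sheared image of $Q_{2r}$ stays inside the image of $(0,T)\times B_R$. It then suffices to prove the heat-equation estimate on pairs $Q_r\subset Q_{2r}$ and take the maximum over a finite cover, using that H\"older seminorms over a union of overlapping cylinders are controlled by the local ones plus the sup norm.

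For the local heat estimate I would use the standard cutoff argument.
\begin{enumerate}
\item Take $\eta\in C_c^\infty$ equal to $1$ on $Q_r$ and supported in $Q_{2r}$, with $\eta$ vanishing near the bottom of $Q_{2r}$. Then $w=\eta v$ solves a Cauchy problem with zero initial data and right-hand side
\[
\eta\tilde f+v\,\partial_t\eta-v\Delta\eta-2\nabla\eta\cdot\nabla v .
\]
\item Apply the global Schauder estimate for the heat equation, i.e.\ the H\"older bounds on the heat potential (Krylov, Ch.~8). This bounds $\|w\|_{C^{1+\alpha/2,2+\alpha}}$ by the $C^{\alpha/2,\alpha}$ norm of that right-hand side.
\item The term $\nabla\eta\cdot\nabla v$ involves $\|\nabla v\|_{C^{\alpha/2,\alpha}}$ on the larger cylinder. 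I absorb it using the interpolation inequality
\[
\|\nabla v\|_{C^{\alpha/2,\alpha}}\le\varepsilon\,[v]_{C^{1+\alpha/2,2+\alpha}}+C_\varepsilon\|v\|_{L^\infty},
\]
together with the usual weighted-seminorm iteration over nested cylinders (Krylov's or Gilbarg--Trudinger-type lemma). The outcome is
\[
[v]_{C^{1+\alpha/2,2+\alpha}(Q_r)}\le C\bigl(\|\tilde f\|_{C^{\alpha/2,\alpha}(Q_{2r})}+\|v\|_{L^\infty(Q_{2r})}\bigr).
\]
\end{enumerate}

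The main obstacle is the time boundary. A genuinely interior Schauder estimate only controls $u$ on sets that stay at positive parabolic distance from $\{t=0\}$. Without information on initial data, the bound cannot hold uniformly up to $t=0$ with the constant $C(n,\alpha,\sigma,b,R,T)$ as written. I would therefore prove the estimate on $(\tau,T)\times B_{R/2}$ for each $\tau>0$, with the constant also depending on $\tau$; at the top time $t=T$ no distance is needed, since the equation is forward. This version is all that is used later: the bootstrap for $p^{\varepsilon,k}_t$ in the proof of Theorem~\ref{thm:reference_density_c2} only needs regularity for $t\ge\tau>0$, where Lemma~\ref{app:lem_bound} provides $\varepsilon$-uniform $L^\infty$ bounds. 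The remaining step of absorbing the gradient term via interpolation is routine.
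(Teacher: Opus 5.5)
Your proposal is correct, and your caveat about the time boundary is not a technicality: the statement as written is false. It claims the estimate on all of $(0,T)\times B_{R/2}$, with $C$ independent of the distance to $\{t=0\}$. Take $u_k(t,x)=e^{-\sigma^2k^2t/2}\sin\bigl(k(x_1-b_1t)\bigr)$. It solves the equation with $f=0$ and $\|u_k\|_{L^\infty}\le 1$. But once $k\ge 2\pi/R$, $\sup_{x\in B_{R/2}}|\partial_{x_1}^2u_k(t,x)|=k^2e^{-\sigma^2k^2t/2}\to k^2$ as $t\downarrow 0$. So the left-hand side is at least $k^2$ while the right-hand side stays bounded. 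The correct statement is the one you prove: the estimate on $(\tau,T)\times B_{R/2}$, with $C$ also depending on $\tau$. That is all the proof of Lemma~\ref{app:lem_cv} actually uses, since it takes bounds on $[\tau/2,T]$ and concludes on $[\tau,T]$.

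The paper gives no argument for this theorem; it only cites Krylov, Ch.~8. Your derivation is the standard way interior estimates are obtained from the whole-space theory: shear away the drift and rescale time to reach the unit heat equation, then use a cutoff, the global Schauder bound for the heat potential, and interpolation. It also makes the $\tau$-dependence explicit, which the citation hides.

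Two small remarks.
\begin{itemize}
\item The global shear is avoidable. You can localize first and remove the drift only at the level of the whole-space Cauchy problem for $w=\eta u$, where the shear preserves the domain. Alternatively, keep $b\cdot\nabla w$ as a lower-order term and absorb it with the same interpolation inequality you use for $\nabla\eta\cdot\nabla v$. Either way, the slanted domain and the covering step disappear.
\item Your absorption step is an a priori argument: it needs the weighted $C^{1+\alpha/2,2+\alpha}$ seminorm to be finite. If $u$ is not known in advance to be locally of that class, mollify in $(t,x)$. This commutes with the constant-coefficient operator, so you can apply the estimate to the mollified functions and pass to the limit.
\end{itemize}
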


The above estimates ensure that, up to extracting a subsequence, \(p^\varepsilon\) admits a \(C^{1,2}\) limit.

\begin{lemma}
\label{app:lem_cv}
Let $0<\tau<T$, $R>0$, and $\alpha\in(0,1)$. Then there exists a constant
$C=C(\tau,T,R,\alpha)>0$, independent of $\varepsilon$, such that for every
$k\in\mathcal I$,
\[
\|p^{\varepsilon,k}\|_{C^{1+\alpha/2,\,2+\alpha}([\tau,T]\times B_R)}
\le C .
\]
Consequently, for every sequence $\varepsilon_n\downarrow0$, there exist a
subsequence, still denoted by $\varepsilon_n$, and functions
$(p^k)_{k\in\mathcal I}$ such that, for every $k\in\mathcal I$,
\[
p^{\varepsilon_n,k}\longrightarrow p^k
\quad\text{in } C^{1,2}_{\mathrm{loc}}\bigl((0,T]\times\mathbb R^d\bigr).
\]
In particular,
\[
p^k\in C^{1,2}\bigl((0,T]\times\mathbb R^d\bigr),
\qquad k\in\mathcal I .
\]
\end{lemma}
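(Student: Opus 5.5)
The plan is to bootstrap regularity component by component, using the scalar reformulation of \eqref{eq:regularised} with right-hand side $f^{\varepsilon,k}_t$. Afterwards we extract a convergent subsequence by Arzel\`a--Ascoli. Fix $0<\tau<T$, $R>0$ and $\alpha\in(0,1)$. Choose nested radii $R<R_1<R_2<R_3$ and nested time intervals $[\tau,T]\subset[\tau_1,T]\subset[\tau_2,T]\subset[\tau_3,T]$ with $\tau_3>0$ (e.g.\ $\tau_3=\tau/2$). Since the equations are forward in time and hold up to $T$, the estimates of Theorems~\ref{app:ref2} and \ref{app:ref1} are to be applied in their interior-in-space, up-to-top-time form; equivalently, we may run the argument on $(0,T+1)$ and restrict. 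The starting point is Lemma~\ref{app:lem_bound}: for $t\ge\tau_3$, uniformly in $\varepsilon$ and $k$,
\[
0\le p^{\varepsilon,k}_t\le (2\pi\underline a\tau_3)^{-d/2}=:K .
\]
Hence $|f^{\varepsilon,k}|\le 2\Lambda m K$ with $\Lambda=\max_{k,l}\lambda_{kl}$. So $p^{\varepsilon,k}$ and $f^{\varepsilon,k}$ are bounded in $L^\infty$, and therefore in every $L^q((\tau_3,T)\times B_{R_3})$, by constants independent of $\varepsilon$.

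\textbf{Step 1 ($W^{2,1}_q$ bounds).} We first check that each $p^{\varepsilon,k}$ is smooth for $t>0$. The smoothed initial data are Gaussian, and $p^\varepsilon$ is an explicit mixture over the regime path of Gaussians (conditionally on $(I_s)$, $X_t$ is Gaussian with variance $\ge \varepsilon+\underline a t$). So $p^\varepsilon$ is smooth, which justifies applying Theorem~\ref{app:ref2} to a smooth $u$. We then apply Theorem~\ref{app:ref2} on $(\tau_2,T)\times B_{R_2}$ with $q>\frac{d+2}{2}$. This gives a uniform bound on $\|p^{\varepsilon,k}\|_{W^{2,1}_q((\tau_2,T)\times B_{R_2})}$.

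\textbf{Step 2 (Hölder and Schauder bounds).} Theorem~\ref{app:ref3} then yields a uniform bound in $C^{\alpha,\alpha/2}([\tau_2,T]\times B_{R_2})$, with $\alpha<2-\frac{d+2}{q}$; taking $q$ large allows any $\alpha\in(0,1)$. Consequently $f^{\varepsilon,k}$, being a finite linear combination of the $p^{\varepsilon,l}$, is uniformly bounded in $C^{\alpha/2,\alpha}$. Theorem~\ref{app:ref1}, applied on $(\tau_1,T)\times B_{R_1}$ and then restricted, gives
\[
\|p^{\varepsilon,k}\|_{C^{1+\alpha/2,2+\alpha}([\tau,T]\times B_R)}\le C(\tau,T,R,\alpha).
\]
This is the first claim of the lemma.

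\textbf{Step 3 (compactness).} The space $C^{1+\alpha/2,2+\alpha}$ embeds compactly into $C^{1,2}$ on compact sets, by Arzel\`a--Ascoli applied to $p$, $\partial_t p$, $\nabla p$ and $D^2p$, which are equicontinuous and bounded. Take exhausting sequences $\tau_N\downarrow0$ and $R_N\uparrow\infty$, and extract a diagonal subsequence, common to all finitely many $k\in\mathcal I$. This produces limits $p^k$ with $p^{\varepsilon_n,k}\to p^k$ in $C^{1,2}_{\mathrm{loc}}((0,T]\times\mathbb R^d)$, so each $p^k$ is $C^{1,2}$ there.

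The main obstacle is making the cited interior estimates applicable up to the final time $T$ and checking that the constants depend only on $(\tau,T,R,\alpha)$ and the fixed coefficients. For the first point I would simply extend the time horizon to $T+1$ and regard $[\tau,T]$ as interior in time. For the second, the key input is that the only $\varepsilon$-dependent quantities entering the estimates are $\|p^\varepsilon\|_{L^\infty}$ and $\|f^\varepsilon\|_{L^\infty}$. These are controlled by the $\varepsilon$-free bound of Lemma~\ref{app:lem_bound}.
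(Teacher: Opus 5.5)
Your proposal is correct and follows the same route as the paper. Both arguments start from the $\varepsilon$-uniform $L^\infty$ bound of Lemma~\ref{app:lem_bound}, apply Theorems~\ref{app:ref2}, \ref{app:ref3} and \ref{app:ref1} in succession, and conclude with Arzel\`a--Ascoli and a diagonal extraction. The paper leaves several details implicit that you spell out: the nested domains, the top-time issue (handled by extending to $T+1$), and the a priori smoothness of $p^\varepsilon$ needed to invoke the $L^p$ estimate.
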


\begin{proof}
Fix $0<\tau<T$ and $R>0$. By Lemma~\ref{app:lem_bound}, for every $t>0$,
$z\in\mathbb R^d$, and $k\in\mathcal I$,
\[
0\le p_t^{\varepsilon,k}(z)
\le
(2\pi \underline a t)^{-d/2}.
\]
Hence the family $(p^{\varepsilon,k})_{\varepsilon>0,\,k\in\mathcal I}$ is
uniformly bounded on $[\tau/2,T]\times\mathbb R^d$.

Moreover, by Definition~\ref{app:def_sm}, the vector
$p^\varepsilon=(p^{\varepsilon,k})_{k\in\mathcal I}$ solves the linear
constant-coefficient forward system
\[
\partial_t p_t^{\varepsilon,k}
=
-b_k\cdot\nabla p_t^{\varepsilon,k}
+
\frac{\sigma_k^2}{2}\Delta p_t^{\varepsilon,k}
-
\sum_{l\neq k}\lambda_{kl}p_t^{\varepsilon,k}
+
\sum_{l\neq k}\lambda_{lk}p_t^{\varepsilon,l},
\qquad k\in\mathcal I .
\]
The coupling terms are therefore uniformly bounded on
$[\tau/2,T]\times\mathbb R^d$. 
Applying
Theorems~\ref{app:ref2}, \ref{app:ref3} and \ref{app:ref1} to this system
gives, for each \(k\in\mathcal I\), a constant \(C=C(\tau,T,R,\alpha)>0\),
independent of \(\varepsilon\), such that
\[
\|p^{\varepsilon,k}\|_{C^{1+\alpha/2,\,2+\alpha}([\tau,T]\times B_R)}
\le C .
\]
In particular, the functions \(p^{\varepsilon,k}\), together with their first
time derivatives and first and second spatial derivatives, are uniformly
bounded and equicontinuous on \([\tau,T]\times B_R\).

By the Arzelà--Ascoli theorem, a diagonal extraction yields a subsequence, denoted by \(\varepsilon_n\), and functions
\((p^k)_{k\in\mathcal I}\) such that, for every \(k\in\mathcal I\),
\[
p^{\varepsilon_n,k}\longrightarrow p^k
\quad\text{in } C_{\mathrm{loc}}^{1,2}((0,T]\times\mathbb R^d).
\]
Consequently,
\[
p^k\in C^{1,2}((0,T]\times\mathbb R^d),
\qquad k\in\mathcal I .
\]
\end{proof}

We have obtained a limit from the smoothed approximations. We now show that this limit still satisfies the distributional Duhamel formula.
\begin{lemma}
\label{app:lem_cv_mild}
Let $(p^i)_{i\in\mathcal I}$ be the limit obtained in the Lemma~\ref{app:lem_cv}, and
for each $t\in(0,T]$ define
\[
\mu_t^i(\mathrm dx)
:=
p_t^i(x)\,\mathrm dx,
\qquad i\in\mathcal I.
\]
Together with the prescribed initial measures $(\mu_0^i)_{i\in\mathcal I}$,
the family $(\mu_t^i)_{t\in[0,T],\,i\in\mathcal I}$ satisfies the distributional
Duhamel system. More precisely, for every $t\in[0,T]$, every
$\phi\in C_c^\infty(\mathbb R^d)$, and every $i\in\mathcal I$,
\[
\begin{aligned}
\langle \mu_t^i,\phi\rangle
=
\langle \mu_0^i,P_t^i\phi\rangle
+
\int_0^t
\left[
-
\sum_{j\neq i}
\lambda_{ij}
\langle \mu_s^i,P_{t-s}^i\phi\rangle
+
\sum_{k\neq i}
\lambda_{ki}
\langle \mu_s^k,P_{t-s}^i\phi\rangle
\right]
\mathrm ds.
\end{aligned}
\]
\end{lemma}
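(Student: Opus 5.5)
The plan is to pass to the limit $\varepsilon_n\downarrow0$ in the distributional Duhamel formula written for the smoothed processes. By Proposition~\ref{app:pro_formula} applied with initial law $\mu_0^{\varepsilon_n}$, for every $t\in(0,T]$, $\phi\in C_c^\infty(\mathbb R^d)$ and $i\in\mathcal I$,
\[
\langle \mu_t^{\varepsilon_n,i},\phi\rangle
=
\langle \mu_0^{\varepsilon_n,i},P_t^i\phi\rangle
+
\int_0^t
\Bigl[
-\sum_{j\neq i}\lambda_{ij}\langle \mu_s^{\varepsilon_n,i},P_{t-s}^i\phi\rangle
+\sum_{k\neq i}\lambda_{ki}\langle \mu_s^{\varepsilon_n,k},P_{t-s}^i\phi\rangle
\Bigr]\mathrm ds .
\]
I will show that each term converges to its counterpart for $(\mu_t^i)$ and $(\mu_0^i)=(\delta_x\mathbf 1_{\{i=\cdot\}})$. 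The case $t=0$ is trivial, so fix $t>0$.

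\textbf{Left side and initial term.} Since $\phi$ has compact support and $p^{\varepsilon_n,i}_t\to p^i_t$ locally uniformly by Lemma~\ref{app:lem_cv}, $\langle\mu_t^{\varepsilon_n,i},\phi\rangle\to\langle\mu_t^i,\phi\rangle$. For the initial term, $\mu_0^{\varepsilon_n,i}$ is a Gaussian of variance $\varepsilon_n$ centred at $x$ (or zero when $i$ differs from the starting regime). Moreover $P_t^i\phi$ is bounded and continuous, being the convolution of $\phi$ with a Gaussian kernel shifted by $b_it$. Weak convergence $\mu_0^{\varepsilon_n,i}\to\delta_x\mathbf 1$ therefore gives convergence of this term.

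\textbf{Integral term, the main obstacle.} Two difficulties arise. First, $P_{t-s}^i\phi$ is not compactly supported, so local uniform convergence of densities is not enough. Second, the bound in Lemma~\ref{app:lem_bound} blows up as $s\to0$.

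To handle the first issue, I will establish tightness of $\{\mu_s^{\varepsilon_n,k}\}$ uniformly in $s\in[0,T]$ and $n$. From the SDE, $\mathbb E|X_s^{\varepsilon}-x|^2\le C(1+\varepsilon)$ with $C$ depending only on $\max|b_k|$, $\max\sigma_k$ and $T$. This yields, for every $s>0$, $\mu_s^{\varepsilon_n,k}\to\mu_s^k$ weakly: the densities converge locally uniformly, and tightness plus mass preservation transfers this to weak convergence. The limit $\mu_s^k$ is then a sub-probability measure, and its total mass equals the limit of the total masses by tightness. Since $P_{t-s}^i\phi$ is bounded continuous, $\langle\mu_s^{\varepsilon_n,k},P_{t-s}^i\phi\rangle\to\langle\mu_s^k,P_{t-s}^i\phi\rangle$ for each $s\in(0,t]$.

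For the second issue, note that the integrand is bounded by $\|\phi\|_\infty\sum\lambda$ uniformly in $n$ and $s$, because the measures have mass at most $1$ and $P^i$ is a contraction in sup norm. Dominated convergence on $(0,t]$ then concludes; the single point $s=0$ is negligible.

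One point to double-check is that the subsequential limit from Lemma~\ref{app:lem_cv} does not lose mass, which matters for identifying $\mu^i_s$. Tightness settles this, and Theorem~\ref{app:thm_uni} subsequently identifies the limit with the law of the process started at $(x,i)$.
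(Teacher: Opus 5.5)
Your proposal is correct and follows the same route as the paper: write the Duhamel formula for the smoothed laws, then pass to the limit term by term using weak convergence of the Gaussian initial data, the local convergence of densities from Lemma~\ref{app:lem_cv}, and dominated convergence in $s$. The paper handles the two delicate points only briefly, and you make both explicit. Your uniform second-moment tightness argument deals with the fact that $P^i_{t-s}\phi$ is not compactly supported. Your total-mass bound $|\langle\mu_s^{\varepsilon_n,k},P^i_{t-s}\phi\rangle|\le\|\phi\|_\infty$ gives a dominating function that avoids the degenerating $L^\infty$ density bound as $s\to0$.
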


\begin{proof}
For every $\varepsilon>0$, the family
$(\mu_t^{\varepsilon,i})_{i\in\mathcal I}$ associated with the smoothed
approximation satisfies the distributional Duhamel formula. Hence, for every
$t\in[0,T]$, every $\phi\in C_c^\infty(\mathbb R^d)$, and every
$i\in\mathcal I$,
\[
\begin{aligned}
\langle \mu_t^{\varepsilon,i},\phi\rangle
=
\langle \mu_0^{\varepsilon,i},P_t^i\phi\rangle
+
\int_0^t
\left[
-
\sum_{j\neq i}
\lambda_{ij}
\langle \mu_s^{\varepsilon,i},P_{t-s}^i\phi\rangle
+
\sum_{k\neq i}
\lambda_{ki}
\langle \mu_s^{\varepsilon,k},P_{t-s}^i\phi\rangle
\right]
\mathrm ds.
\end{aligned}
\]
Letting $\varepsilon\downarrow0$, the initial measures
$\mu_0^{\varepsilon,i}$ converge weakly to $\mu_0^i$, while the previous lemma
gives
\[
p^{\varepsilon,i}\longrightarrow p^i
\quad\text{in } C^{1,2}_{\mathrm{loc}}\bigl((0,T]\times\mathbb R^d\bigr),
\qquad i\in\mathcal I .
\]
Since \(\phi\) is compactly supported and \(P_t^i\phi\) is bounded and smooth, the initial term and the first integral term pass to the limit directly. For the last integral term, the convergence \(p^{\varepsilon,k}\to p^k\), together with the uniform bounds obtained above, allows us to apply the dominated convergence theorem. We may therefore pass to the limit in the Duhamel formula, which yields the claimed identity.
\end{proof}

\begin{proof}[Proof of Theorem~\ref{thm:reference_density_c2}]
Fix \((x,i)\in \mathbb R^d\times\mathcal I\). Approximate
\(\mu_0 = \delta_x\otimes\delta_i\) by the smoothed initial laws $\mu_0^\varepsilon$ of Definition~\ref{app:def_sm}.
By Lemma~\ref{app:lem_bound}, the family of regime-wise densities
\(\{p^{\varepsilon,k}\}_{\varepsilon>0}\) is uniformly bounded.
Moreover, the regularity estimates in Theorems~
\ref{app:ref2}, \ref{app:ref3} and \ref{app:ref1}, applied successively, imply the corresponding local
equicontinuity in \(C^{1,2}\bigl((0,T]\times\mathbb R^d\bigr)\).
Therefore, by the Arzelà--Ascoli theorem, there
exists a subsequence, still denoted by \(p^{\varepsilon,k}\), and a limit
\(p^k\in C^{1,2}\bigl((0,T]\times\mathbb R^d\bigr)\), such that
\(p^{\varepsilon,k}\to p^k\) locally in
\(C^{1,2}\bigl((0,T]\times\mathbb R^d\bigr)\).

It remains to identify this limit. For every \(\varepsilon>0\), the measures
\(p_t^{\varepsilon,k}(y)\,\mathrm dy\) satisfy the distributional Duhamel
formula~\eqref{eq:component-mild-form} in Proposition~\ref{app:pro_formula}. Passing to the limit, the measures
\(p_t^k(y)\,\mathrm dy\) satisfy the same Duhamel system with initial data
\(\delta_x\otimes\delta_i\) by Lemma~\ref{app:lem_cv_mild}. The law of the reference process started from
\((x,i)\) also satisfies this system. Therefore, by the uniqueness result of
Theorem~\ref{app:thm_uni}, the two measure-valued solutions coincide. Consequently,
\[
r_{ij}(0,x;T,y)=p_T^j(y).
\]
Thus \(y\mapsto r_{ij}(0,x;T,y)\) is of class \(C^2\).

Since the coefficients are constant, the reference dynamics are spatially
translation invariant. Hence
\[
r_{ij}(0,x;T,y)=r_{ij}(0,0;T,y-x),
\]
which implies that \((x,y)\mapsto r_{ij}(0,x;T,y)\) is of class \(C^2\).

It remains to prove strict positivity. Set
\(\Lambda_k:=\sum_{\ell\neq k}\lambda_{k\ell}\). If \(i=j\), we consider the
event that no regime switch occurs on \([0,T]\). This event has probability
\(e^{-\Lambda_iT}>0\), and on this event
\[
X_T=x+b_iT+\sigma_iW_T.
\]
Hence the law of \(X_T\) contains a non-degenerate Gaussian contribution,
which is everywhere strictly positive. Therefore
\(r_{ii}(0,x;T,y)>0\) for every \(y\in\mathbb R^d\).

If \(i\neq j\), we consider the trajectories with exactly one regime switch,
from \(i\) to \(j\), occurring at some time \(s\in(0,T)\). The corresponding
contribution to the transition density is
\[
\int_0^T
e^{-\Lambda_i s}\lambda_{ij}e^{-\Lambda_j(T-s)}
g_{\sigma_i^2s+\sigma_j^2(T-s)}
\bigl(y-x-b_is-b_j(T-s)\bigr)\,\mathrm ds,
\]
where \(g_v\) denotes the centered Gaussian density on \(\mathbb R^d\) with
covariance \(vI_d\). Since \(\lambda_{ij}>0\), \(\sigma_i,\sigma_j>0\), and
\(g_v\) is strictly positive everywhere for every \(v>0\), this integral is
strictly positive. Hence \(r_{ij}(0,x;T,y)>0\) for all
\(i,j\in\mathcal I\) and all \(x,y\in\mathbb R^d\). This proves the theorem.
\end{proof}

\newpage
\bibliographystyle{plain}
\bibliography{bibliography.bib}

\end{document}